\newtheorem{thm}{Theorem}[section]
\newtheorem{lemma}[thm]{Lemma}
\newtheorem{prop}[thm]{Proposition}
\newtheorem{cor}[thm]{Corollary}
\newtheorem{defn}[thm]{Definition}
\newtheorem{rem}[thm]{Remark}
\numberwithin{equation}{section}
\numberwithin{figure}{section}
\theoremstyle{plain}
\theoremstyle{plain}
\theoremstyle{plain}
\numberwithin{equation}{section}
\numberwithin{figure}{section}
\theoremstyle{plain}
\theoremstyle{plain}
\theoremstyle{plain}
\newcommand{\les}{\lesssim}
\newcommand{\ve}{{\varepsilon}}
\newcommand{\de}{{\delta}}
\newcommand{\al}{{\alpha}}
\newcommand{\R}{{\mathbb R}}
\def\cF{\mathcal F}
\def\cH{\mathcal H}
\def\cR{\mathcal R}
\def\cS{\mathcal S}
\newcommand{\p}{\partial}
\def\normo#1{\left\|#1\right\|}
\def\bra#1{\left\langle #1\right\rangle}
\def\wt#1{\widetilde{#1}}
\def\wh#1{\widehat{#1}}
\def\ol#1{\overline{#1}}
\newcommand{\palbe}{{P_{\al,\beta}^\pm}}
\def\sgn#1{{\rm sgn}(#1)}
\begin{document}
\title{Global solution and asymptotic behavior for the kinetic derivative NLS on $\mathbb R$}

\author{Nobu Kishimoto}
\address{Research Institute for Mathematical Sciences(RIMS), Kyoto University, Kitashirakawa Oiwake-cho, Sakyo-ku, Kyoto, 606-8502, Japan}
\email{nobu@kurims.kyoto-u.ac.jp}

\author{Kiyeon Lee}
\address{Stochastic Analysis and Application Research Center(SAARC), Korea Advanced Institute of Science and Technology, 291 Daehak-ro, Yuseong-gu, Daejeon, 34141, Republic of	Korea}
\email{kiyeonlee@kaist.ac.kr}

\thanks{{\bf 2020 Mathematics Subject Classification:} Primary -- 35Q55; Secondary -- 35A01, 35B40, 35B45.}
\thanks{{\it Keywords and phrases.} kinetic derivative NLS, global existence, asymptotic behavior, modified scattering, a priori estimate.}

\begin{abstract}
In this paper we investigate the global well-posedness and long-term behavior of solutions to the kinetic derivative nonlinear Schr\"odinger equation (KDNLS) on the real line. The equation incorporates both local cubic nonlinearities with derivative terms and a non-local term arising from the Hilbert transform, modeling interactions in plasma physics. We establish global existence for small initial data in the weighted Sobolev space $H^2 \cap H^{1,1}$  and optimal time decay effect. Using energy methods and a frequency-localized gauge transformation, we overcome the difficulties posed by the non-local nonlinearities and provide a rigorous analysis of the asymptotic behavior. Our results also describe modified scattering phenomena with a suitable phase modification, showing that the solutions exhibit a precise asymptotic profile as $t \to \infty$.
\end{abstract}

\maketitle

\tableofcontents

\section{Introduction}
\label{sec:intr}

We consider the following one-dimensional nonlinear Schr\"odinger equation (NLS) with cubic nonlinear terms with derivative:
\begin{equation}
	\left\{\begin{aligned}
		i\partial_tu + \partial_x^2u &= i\alpha \partial_x \big[ |u|^2u\big] +i\beta \partial_x\big[\mathcal{H}(|u|^2)u\big] , \qquad t>0, ~ x \in \mathbb{R},\\
		u(0,x) &= \phi(x),
	\end{aligned}\right.\label{kdnls}
\end{equation}
where $\alpha ,\beta \in \mathbb{R}$ are constants and $\cH$ is the Hilbert transform defined by
\[ \mathcal{H}f(x) :=\frac{1}{\pi} p.v.\int_{-\infty}^\infty\frac{f(y)}{x-y}\,dy = \mathcal{F}^{-1}\left[ -i\, \mathrm{sgn}(\xi)\mathcal{F}f\right] (x). \]
When $\beta =0$, the equation \eqref{kdnls} reduces to the standard derivative NLS, which is known to be completely integrable.
The equation \eqref{kdnls} with $\beta \neq 0$ was derived in \cite{MW86,MW88} as a model for the propagation of weakly nonlinear and weakly dispersive Alfv\'en waves in a collision-less plasma.
The non-local nonlinear term $\p_x\left[ \cH(|u|^2)u\right]$ representing the effect of resonant particles is obtained using a kinetic model, and thus we call \eqref{kdnls} with $\beta \neq 0$ the \emph{kinetic derivative NLS}.

We begin by observing a fundamental property of solutions to \eqref{kdnls}.
Formally, the  $L^2$-norm satisfies the identity
\begin{equation*}
	\frac{d}{dt}\| u(t)\|_{L^2}^2\ =\ \beta \big\| D_x^{\frac12}(|u(t)|^2)\big\|_{L^2}^2,
\end{equation*}
where $D_x:=|\partial_x|=\mathcal{H}\partial_x$. This identity implies the dissipative nature of the equation when $\beta <0$, in contrast to the monotone increasing nature when $\beta >0$.

This study extends the local and global well-posedness result for \eqref{kdnls} with $\beta \neq 0$ obtained in \cite{KL-LWP}. As the asymptotic behavior and time decay effect of solutions were not addressed in \cite{KL-LWP}, we aim to investigate these properties in this paper. We begin with reviewing the known results for \eqref{kdnls} and related equations. While most of these results have been discussed in detail in \cite{KL-LWP}, we shall recall them here for the convenience of the reader.

The mathematical theory for \eqref{kdnls} has been extensively developed in the special case $\beta=0$, corresponding to the derivative NLS.
Global well-posedness in the scale-critical space $L^2(\R)$ has been established by Harrop-Griffiths, Killip, Ntekoume, and Visan \cite{HGKNVp} thanks to complete integrability (see also \cite{HGKV23} for the torus case). For the case $\beta \neq 0$, Rial~\cite{R02} showed the construction of  global $L^2$-weak solutions to \eqref{kdnls} with $\beta <0$ and smoothing effect by the dissipative structure. We also refer to Peres~de~Moura and Pastor~\cite{PdMP11} for the applicability of Kato smoothing and maximal function estimates to the non-local nonlinearity. The first author and Tsutsumi~\cite{KT-stFr,KT23-2} established local well-posedness in $H^s(\R)$ for $s>1$ and derived a priori $H^s$ bounds for solutions with $s>1/4$, again under the dissipative assumption $\beta <0$. Furthermore, in the periodic setting \cite{KT23-1}, they observed that the dissipative structure makes the equation of (first-order) parabolic type by the resonant interactions. This observation led to global well-posedness in $H^s(\mathbb{T})$, $s>1/4$ in the case $\beta<0$~\cite{KT-gauge} and non-existence of solutions for $s>3/2$ in the case $\beta >0$~\cite{KT23-1}. As discussed above, the authors of this paper have also recently proved well-posedness in $H^{2}(\R) \cap H^{1,1}(\R)$ locally for $\beta \neq 0$ and globally $\beta <0$.

As reviewed above,  the asymptotic behavior of the solution to \eqref{kdnls} with $\beta \neq 0$ on the real line has not been shown. In this paper, we prove the modified scattering and decay properties for \eqref{kdnls} with $\beta \neq 0$. Moreover, we examine the precise dissipative effect of the solution on $\R$ thanks to the decay effects. Here is the main theorem of this paper:
\begin{thm}\label{thm:main}
	There exists $\ve_0>0$ such that the following holds:
	
	 Suppose that the initial data $\phi$ in \eqref{kdnls} satisfies 
\begin{align}\label{thm:initial}
	\|\phi\|_{H^2} + \|x\phi\|_{H^1}:= \ve \le \ve_0,
\end{align}
for some $0 < \ve \le \ve_0$. Then the Cauchy problem \eqref{kdnls} with initial data $\phi$ has a global solution $u(t)$ to \eqref{kdnls}, decaying as
\begin{align}\label{thm:decay}
	\|u(t)\|_{W^{1,\infty}} \les  \bra{t}^{-\frac12}\ve
\end{align}
and there exist functions $W, \,\Phi \in L^\infty$ and $\de_1 >0$ such that
\begin{align}\label{thm:asymptotic}
\begin{aligned}
		u(t)&= \frac1{\sqrt{2it}} e^{\frac{ix^2}{4t}} W\left(\frac x{2t}\right) \exp\left(i\left[ \frac {\al x}{4t}\left|W\left(\frac x{2t}\right) \right|^2  +  \frac {\beta x}{4t} \cH \left(\left|W\right|^2\right) \left(\frac x{2t}\right) \right] \log t + i \Phi\left(\frac x{2t}\right)\right)  \\
	&\qquad  +O(t^{-\frac12-\de_1})
\end{aligned}
\end{align}
uniformly in $x \in \R$, for some $\de>0$. Moreover, the limit $D_\infty =  \lim\limits_{t\to \infty}  \|u(t)\|_{L^2}$ exists and satisfies $D_\infty = (1 + O(\ve^2)) \|\phi\|_{L^2}$ and for some $\de_2>0$,
\begin{align}\label{thm:dissipation}
\begin{aligned}
	&D_\infty \le \|\phi\|_{L^2}, \qquad	   0 \le \|u(t)\|_{L^2} -D_\infty \les  \bra{t}^{-1+\de_2}\ve^3,    \qquad\mbox{ when }\;\; \beta<0,\\
	&D_\infty \ge \|\phi\|_{L^2}, \qquad 0 \le D_\infty - \|u(t)\|_{L^2} \les \bra{t}^{-1+\de_2}\ve^3 ,  \qquad \mbox{ when }\;\; \beta>0,
\end{aligned}
\end{align}
for all $t>0$.
\end{thm}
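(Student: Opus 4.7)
The plan is to prove the theorem via a bootstrap argument on three quantities: the high-regularity norm $\|u(t)\|_{H^2}$, the weighted norm $\|Ju(t)\|_{H^1}$ where $J=x+2it\p_x$ is the Galilean vector field (equivalent to $\|u\|_{H^{1,1}}$ via the factorization $e^{it\p_x^2}=M(t)D(t)\cF M(t)$ with $M(t)=e^{ix^2/4t}$ and $D(t)f(x)=(2it)^{-1/2}f(x/2t)$), and the pointwise decay quantity $\bra{t}^{1/2}\|u(t)\|_{W^{1,\infty}}$. With bootstrap constants a small multiple of $\ve$ for each, closing all three estimates simultaneously yields a global solution together with the decay bound \eqref{thm:decay}.

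For the energy estimates on $u$ and $Ju$ the main obstruction is the one-derivative loss in $\p_x[|u|^2u]$ and $\p_x[\cH(|u|^2)u]$. I will invoke the frequency-localized gauge transformation from \cite{KL-LWP} to eliminate the non-perturbative high-low interactions, reducing the derivative nonlinearities to trilinear forms for which the identity
\begin{equation*}
\frac{d}{dt}\|u\|_{L^2}^2 \;=\; \beta \|D_x^{1/2}(|u|^2)\|_{L^2}^2
\end{equation*}
and its $H^k$ analogues provide the needed cancellation: the Hilbert-transform term furnishes dissipation when $\beta<0$, while when $\beta>0$ the gauge absorbs the bad growth and converts it to integrable cubic-times-pointwise terms. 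Commuting $J$ through the equation produces nonlinearities of the form $(Ju)|u|^2$, $u^2\ol{Ju}$, and their Hilbert-transform analogues, all bounded by $\|u\|_{W^{1,\infty}}^2\|Ju\|_{H^1}$ and hence integrable in time thanks to the $\bra{t}^{-1/2}$ decay.

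For the modified scattering analysis I will pass to the profile $v(t):=e^{-it\p_x^2}u(t)$ and expand the nonlinearity via the factorization above. Stationary phase at $\xi=x/2t$ isolates the resonant diagonal contribution, yielding an ODE of the form
\begin{equation*}
\p_t\wh v(t,\xi) \;=\; \frac{i}{2t}\Big[\al\,\xi|\wh v(t,\xi)|^2 + \beta\,\xi\,\cH(|\wh v(t,\cdot)|^2)(\xi)\Big]\wh v(t,\xi) + R(t,\xi),
\end{equation*}
with a remainder $R=O(t^{-1-\de_1})$ in $L^\infty_\xi$, extracted by non-stationary-phase integration-by-parts in the off-diagonal region and Hausdorff-Young bounds controlled by the weighted norm $\|Ju\|_{H^1}$. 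Introducing the integrating factor $\exp(-i[\cdots]\log t)$ removes the leading logarithmic oscillation and yields a Cauchy sequence, producing $W\in L^\infty$ and the asymptotic profile \eqref{thm:asymptotic} after reinserting the factorization of $e^{it\p_x^2}$.

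Finally, for \eqref{thm:dissipation}, integrating the mass identity in time and estimating $\|D_x^{1/2}(|u|^2)\|_{L^2}^2$ by Sobolev interpolation between $\|u\|_{L^2}$ and $\|u\|_{W^{1,\infty}}$ (with a small loss absorbed into $\de_2$) gives a $\bra{t}^{-1+\de_2}$ integrand; monotone convergence of $\|u(t)\|_{L^2}$ follows with sign dictated by $\beta$, and the bound $D_\infty=(1+O(\ve^2))\|\phi\|_{L^2}$ is read off from the total variation. The main obstacle I anticipate is coordinating the frequency-localized gauge (indispensable for closing the $H^2$ and weighted energies) with the stationary-phase asymptotic analysis (cleanest in the original variable $u$); my strategy is to perform the modified scattering step only after the bootstrap energy bounds are independently closed, so that the pointwise decay can be fed as a black-box input into the stationary-phase estimate and the gauge never needs to be inverted inside the asymptotic analysis.
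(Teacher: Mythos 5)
Your roadmap captures the right high-level structure (energy plus weighted bootstrap, gauge to kill the derivative loss, stationary-phase modified scattering), but there are three concrete gaps, each of which would block the proof as stated.

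\textbf{Uniform bootstrap bounds cannot close.} You propose bootstrap constants that are a fixed small multiple of $\ve$ for $\|u(t)\|_{H^2}$ and $\|Ju(t)\|_{H^1}$ (and for $\bra{t}^{1/2}\|u(t)\|_{W^{1,\infty}}$). But the gauge weight contributes a term $\int_0^t|\partial_t\rho^\pm_0[u]|\,\|\cdot\|^2\,ds$ to the energy identity, and $\|\partial_t\rho^\pm_0[u]\|_{L^\infty}\sim\|\bar u\,\partial_xu\|_{L^\infty}\sim \bra{t}^{-1}\ve^2$ with no room for improvement. Plugging a uniform $H^2$ bound in the Gronwall argument then returns at best $\|u(t)\|_{H^2}\lesssim \ve\,\bra{t}^{C\ve^2}$, not $\lesssim\ve$. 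The same happens for $\|Ju\|_{H^1}$, where your own estimate $\|u\|_{W^{1,\infty}}^2\|Ju\|_{H^1}\sim\bra{t}^{-1}\ve^2\|Ju\|_{H^1}$ is borderline nonintegrable. The paper's working a priori quantity \eqref{eq:assumption-apriori} therefore allows slow growth $\bra{t}^{\delta}$ (resp. $\bra{t}^{2\delta}$) in the $H^2$ (resp. weighted) norms and obtains the uniform $\bra{t}^{-1/2}$ decay from a \emph{uniformly bounded Fourier amplitude} $\|\bra{\xi}\wh f(t,\xi)\|_{L^\infty_\xi}$ via the refined linear estimate (Lemma~\ref{lem:linear}), not by bootstrapping $\|u\|_{W^{1,\infty}}$ directly. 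This reorganization is essential: without it the bootstrap closure fails.

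\textbf{The gauge from \cite{KL-LWP} is insufficient for the weighted estimate.} The transformation $\rho_0^\pm[u]=\mp\frac{\beta}{2}\partial_x^{-1}[|u|^2]$ from the local paper removes only the $\beta\,\mathcal H(\bar u\,\partial_xu)\,u$ resonance. The present paper needs the fuller gauge $\rho^\pm[u]$ of Definition~\ref{def:gauge}, which also removes $\alpha|u|^2\partial_xu$ and $\beta\mathcal H(|u|^2)\partial_xu$, and which introduces the modified anti-derivative $\wt{\partial}_x^{-1}$ since $\mathcal H(|u|^2)\notin L^1$. Remark~\ref{rem:gauge} explains precisely why: with only the simpler gauge the weighted energy identity produces a residual term $2\mathrm{Re}\int\partial_xJ\{(2\alpha|u|^2+\beta\mathcal H(|u|^2))Q_\pm\partial_xu\}\bar w\,dx$, which after distributing $\partial_xJ$ contains products of type $u\cdot\overline{Ju}\cdot\partial_x^2u\cdot\partial_x\overline{Ju}$. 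Unlike the $H^2$ analogue (which sees $\partial_xu$ in place of $Ju$, and $\|\partial_xu\|_{L^\infty}$ decays), there is no pointwise decay for $Ju$, so these terms cannot be absorbed. Your plan to re-use the \cite{KL-LWP} gauge unchanged would get stuck here.

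\textbf{Interpolation alone does not give the dissipation rate.} For \eqref{thm:dissipation} you propose to bound $\|D_x^{1/2}(|u|^2)\|_{L^2}^2$ by Sobolev interpolation between $\|u\|_{L^2}$ and $\|u\|_{W^{1,\infty}}$. This yields only $\|D_x^{1/2}(|u|^2)\|_{L^2}^2\lesssim\||u|^2\|_{L^2}\|\partial_x(|u|^2)\|_{L^2}\lesssim\bra{t}^{-1}\ve^4$, whose time-integral diverges logarithmically; you would not obtain existence of $D_\infty$ nor a convergence rate. The crucial input is the null-structure identity of Lemma~\ref{lem:esti-null},
\[
\partial_x(|u|^2)=\frac{1}{2it}\Bigl(\bar u\,e^{it\partial_x^2}xf - u\,\overline{e^{it\partial_x^2}xf}\Bigr),
\]
which, combined with the decay and the weighted bound, upgrades the estimate to $\|\partial_x(|u|^2)\|_{H^1}\lesssim\bra{t}^{-3/2+2\delta}\ve^2$. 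This extra half power of decay is exactly what makes $\frac{d}{dt}\|u(t)\|_{L^2}\lesssim\bra{t}^{-2+2\delta}\ve^3$ integrable and gives the stated $\bra{t}^{-1+\delta_2}$ rate.

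Finally, a smaller point of emphasis: the claim that the dissipation for $\beta<0$ and the gauge for $\beta>0$ play complementary roles in closing the energy estimates does not reflect the argument. In the paper the gauge is used for both signs of $\beta$ to eliminate the derivative loss, and all energy and weighted estimates close by smallness and time decay alone; the sign of $\beta$ enters only in the monotonicity of $\|u(t)\|_{L^2}$ (Lemma~\ref{lem:L2energy}) and the sign structure of \eqref{thm:dissipation}.
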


Note that the equation lacks exact time reversal symmetry when $\beta \neq 0$; if $u(t,x)$ is a solution, then $\ol{u(-t,-x)}$ solves the same equation but with $-\beta$ replacing $\beta$. By this transformation, we have similar behavior results for the backward time.


\begin{rem}
For the asymptotic expansion \eqref{thm:asymptotic}, we prove that there exists $W \in L_\xi^\infty$ such that
\begin{align*}
		\normo{ \bra{\xi}  \left( \wh{u}(t,\xi) - e^{iB(t,\xi)}e^{it\xi^2}   W(\xi) \right) }_{L_\xi^\infty} \les \bra{t}^{-\de} \ve^3,
\end{align*}
	where the phase modification $B(t,\xi)$ is the real valued function defined by 
\begin{align*}
	B(t,\xi) &:= B_1(t,\xi) +  B_2(t,\xi),
\end{align*}
where
\begin{align*}
	B_1(t,\xi)&:= \al \int_1^t \frac1{2s} \xi  \left|\wh{f}(s,\xi)\right|^2 ds,	\;\;\mbox{ and }\;\;	B_2(t,\xi):= \beta \int_1^t \frac1{2s} \xi \mathcal{H}\Big( \left|\wh{f}(s,\cdot )\right|^2\Big) (\xi) ds,
\end{align*}
where $f(t)= e^{-it\p_x^2} u(t)$. This statement directly implies \eqref{thm:asymptotic} by using the same argument of \cite{hayashi-naumkin1998,HN1998-dnls} for an appropriate real-valued function $\Phi$ (see Section \ref{sec:main}).
\end{rem}

\begin{rem}
We consider the monotonic decreasing (resp., increasing) behavior \eqref{thm:dissipation} of the solution to \eqref{kdnls} with $\beta < 0$ (resp., $\beta>0$) thanks to the weighted assumption and null-form bilinear estimates (Lemma \ref{lem:esti-null}). See Lemma \ref{lem:L2energy} and Section \ref{sec:main} for the detail. 

Theorem \ref{thm:main} also establishes the global existence of solutions for \eqref{kdnls} when $\beta > 0$, contrasting with previous results by the first author and Tsutsumi, which demonstrated the non-existence of solutions in the periodic case.
\end{rem}


As we observed in \eqref{thm:asymptotic}, we consider the modified scattering for \eqref{kdnls} with $\beta \neq 0$.  The study of modified scattering for nonlinear dispersive equations has a long history, originating from the works on the NLS. Pioneering results by Ozawa \cite{ozawa1991} and Hayashi-Naumkin \cite{hayashi-naumkin1998, HN1998-dnls} established the modified scattering behavior for small solutions to cubic NLS and derivative NLS.

Subsequent developments have introduced a convenient framework that explains all of these results in terms of the concept of \emph{space-time resonances}, leading to a deeper understanding of asymptotic behaviors for a wide class of dispersive equations. These approaches were introduced independently by Germain-Masmoudi-Shatah \cite{gemasha2008, gemasha2012-annals, gemasha2012-jmpa} and Gustafson-Nakanishi-Tsai \cite{gunakatsa2009} and developed by many mathematicians \cite{pusa,iopu2014, iopu2015, iopau2019,CKLY2022,sautwang2021-compde,iopau2014,chle2024,hanipusha2013} for the seminal modified (or linear) scattering results on dispersive equations and related systems. Moreover, we refer to  \cite{kapu} for an application of this approach to \cite{ozawa1991,hayashi-naumkin1998} (see also \cite{ifrtata2015}).

Our goal is to establish the global existence and asymptotic behavior of small solutions to \eqref{kdnls} with $\beta \neq 0$. The strategy relies on the resonance approach listed above with a bootstrap argument under a suitable a priori assumption, which involves $H^2$ and weighted norms:
\begin{align}\label{intro:assumption}
	\sup_{t \in [0,T]} \left( \langle t \rangle^{-\delta} \|u(t)\|_{H^2} + \langle t \rangle^{-2\delta} \|x e^{-it\partial_x^2} u(t)\|_{H^1} \right) \leq \varepsilon_1
\end{align}
for some $T > 0$, sufficiently small $\varepsilon_1 > 0$, and $\delta > 0$. Under this assumption, we obtain the optimal time decay effect given by \eqref{thm:decay} by employing the $L^\infty_\xi$ norm (see Proposition \ref{prop:linfty}).

To obtain the asymptotic behavior, we need to employ a function space that avoids time growth, unlike in the a priori assumption \eqref{intro:assumption}, following similar approaches used for NLS and DNLS \cite{kapu,HN1998-dnls,HN1997-dnls}. Specifically, we extract the resonance term based on the space-time resonant set:
\begin{align}\begin{aligned}\label{eq:correction}
		\partial_t \wh{f}(t, \xi) &= \left[ \frac{\alpha i}{2t} \xi \left| \wh{f}(s, \xi) \right|^2 + \frac{\beta i}{2t} \xi \mathcal{H} \left( \left| \wh{f}(s, \cdot) \right|^2 \right)(\xi) \right] \wh{f}(\xi) + \text{l.o.t.}
\end{aligned}\end{align}
See Section \ref{sec:asymptotic} for detailed calculations. The resonance terms in \eqref{eq:correction} provide the precise phase corrections, which allow us to control the pointwise Fourier amplitude of the solution by eliminating the resonance cases.


In applying this approach, both the derivative NLS and non-local nonlinear terms have to be handled. Due to the non-locality, the standard gauge transformation for derivative NLS does not work. Specifically, for the nonlinear term $\partial_x (|u|^2 u)$, the high-low-low interaction term $u_{\text{low}} \overline{u_{\text{low}}} \partial_x u_{\text{high}}$ is problematic case. However, this delicate case can be resolved in the case of $\beta = 0$ using the typical gauge transformation:
\begin{align}\label{eq:dnls-gaugetrans}
	u(t,x) \mapsto u(t,x) \exp\left[ \frac{1}{2i} \partial_x^{-1} (2\alpha |u(t)|^2) \right].
\end{align}
This transformation eliminates the problematic case when $\beta = 0$. In contrast, for $\beta \neq 0$, the transformation \eqref{eq:dnls-gaugetrans} cannot eliminate the term $\beta \mathcal{H}(\overline{u} \partial_x u ) u$. To address this issue, we introduce a frequency-localized gauge transformation (see \cite{Tao04,IK2007,KeTa06,KT-gauge}). A detailed discussion of this transformation is in Section \ref{sec:gauge}. 

Even though we use the frequency-localized gauge transformation to avoid the resonance case, the non-locality involving the anti-derivative implies singularities. This leads to Duhamel's formula for \eqref{kdnls} inapplicable. To overcome this obstacle, we apply the energy method, which gives cancellation effects to handle the singularity.


It is worth noting that our equation \eqref{kdnls} closely resembles to the Calogero-Moser derivative NLS (CM-DNLS):
\begin{align*}
	i\p_t u +\p_x^2 u = 2i \p_x  P_+(|u|^2)u,
\end{align*}
which also contains both derivative and nonlocal nonlinearities. Here, $P_+$  is defined in Section \ref{sec:gauge} and we note that $2i\p_x P_+ = (i-\cH)\p_x$. CMDNLS arises as a continuum model of the classical Calogero-Moser Hamiltonian system. Compared to our equation, this equation is integrable and enjoys an $L^2$ conservation law. Moreover, in the CM-DNLS, the derivative falls on the quadratic term and is supported on the half-line in the Fourier space. We refer the reader to \cite{gelen2024,KillLauVis2023} for the well-posedness results and \cite{hokowa2024,kimkimkwon2024} for the blowup results. Especially see to \cite{kimkwon2024} for the soliton resolution.

This paper is organized as follows. In Section \ref{sec:gauge} we introduce the frequency-localized gauge transform for \eqref{kdnls} with $\beta \neq 0$. Section \ref{sec:lemmas} contains proofs of various useful estimates for the terms related to the gauge transformation and bilinear form. We also provide a  bootstrap assumption that leads to optimal dispersive decay for the solution to \eqref{kdnls} with $\beta \neq 0$.  Section \ref{sec:energy} is devoted to performing energy methods for our main equations, where we carefully handle delicate frequency relation and resonant cases. In Section \ref{sec:weighted} we control the weighted norm in a priori assumption and begin with the properties of Galilean generator $J(t)= x +2it\p_x$. We carry out the energy methods similar to those of Section \ref{sec:energy} and close the bootstrap argument. In Section \ref{sec:asymptotic} we show the asymptotic behaviors of the solution, from which we can control the Fourier amplitude norm. We also describe how to extract the phase modification of the solution from the resonance interaction between cubic non-local nonlinear terms. Finally, in Section \ref{sec:main} we prove Theorem \ref{thm:main}.

\subsection*{Notations}
\noindent $\bullet$ (Mixed-normed spaces) For a Banach space $X$
and an interval $I$, $u\in L_{I}^{q}X$ iff $u(t)\in X$ for a.e.$t\in I$
and $\|u\|_{L_{I}^{q}X}:=\|\|u(t)\|_{X}\|_{L_{I}^{q}}<\infty$. Especially,
we denote $L_{I}^{q}L_{x}^{r}=L_{t}^{q}(I;L_{x}^{r}(\R))$, $L_{I,x}^{q}=L_{I}^{q}L_{x}^{q}$,
$L_{t}^{q}L_{x}^{r}=L_{\mathbb{R}}^{q}L_{x}^{r}$.

\noindent $\bullet$  As usual different positive constants are denoted  by the same letter $C$, if not specified.
$A\lesssim B$ and $A\gtrsim B$ means that $A\le CB$ and $A\ge C^{-1}B$,
respectively for some $C>0$. $A\sim B$ means that $A\lesssim B$
and $A\gtrsim B$.

\noindent $\bullet$ We fix our definition of the (one-dimensional) Fourier and inverse Fourier transforms as
\[ \cF_x[f](\xi )=\wh{f}(\xi) :=\frac{1}{\sqrt{2\pi}}\int_\R e^{-ix\xi}f(x)dx \;\;\mbox{ and }\;\; \cF^{-1}_\xi [g](x):=\frac{1}{\sqrt{2\pi}}\int_\R e^{ix\xi}g(\xi)d\xi . \]

\noindent $\bullet$  We use the notation $\bra{\cdot} := (1+ |\cdot|)^\frac12$.

\noindent $\bullet$ (Littlewood-Paley operators) Let $\varrho$ be a
Littlewood-Paley function such that $\varrho\in C_{0}^{\infty}(B(0,2))$
with $\varrho(\xi)=1$ for $|\xi|\le1$ and define $\varrho_{N}(\xi):=\varrho\left(\frac{\xi}{N}\right)-\varrho\left(\frac{2\xi}{N}\right)$
for $N\in2^{\mathbb{Z}}$. Then we define the frequency projection
$P_{N}$ by $\mathcal{F}(P_{N}f)(\xi)=\varrho_{N}(\xi)\widehat{f}(\xi)$,
and also $\varrho_{>N_0}:=\sum_{N>N_{0}}\varrho_{N}$ and  $\varrho_{\le N_{0}}:=1-\varrho_{>N_0}$. In addition
$P_{>N_0}:=\sum_{ N> N_{0}}P_{N}$, $P_{\le N_0}:= 1- P_{>N_0}$, and
$P_{\sim N_{0}}:=\sum_{N\sim N_{0}}P_{N}$. For $N\in2^{\mathbb{Z}}$
we denote $\widetilde{\varrho_{N}}=\varrho_{N/2}+\varrho_{N}+\varrho_{2N}$. In particular, $\widetilde{P_{N}}P_{N}=P_{N}\widetilde{P_{N}}=P_{N}$
where $\widetilde{P_{N}}=\mathcal{F}^{-1}\widetilde{\varrho_{N}}\mathcal{F}$.
Especially, we denote $P_{N}f$ by $f_{N}$ for any measurable function
$f$.

\section{Gauge transformation}
\label{sec:gauge}

In this section, we define the gauge transformations which remove the worst nonlinear interactions of high-low type. 
They are non-periodic counterparts of the ones introduced in \cite{KT-gauge} for the periodic problem, treating three kinds of nonlinearities $\alpha |u|^2\partial_xu$, $\beta \mathcal{H}(|u|^2)\partial_xu$ and $\beta \mathcal{H}(\ol{u}\partial_xu)u$.

We use the operators $P_\pm:=\mathcal{F}^{-1}\chi_\pm \mathcal{F}$ and $Q_\pm :=P_\pm P_{>1}$, 
where $\chi_\pm$ is the characteristic function of the half line $\{ \xi:\pm \xi>0\}$.
Note that $\mathcal{H}=-iP_++iP_-$ and $\mathcal{H}Q_\pm =\mp iQ_\pm$.
To see the idea, let $u$ be a solution of \eqref{kdnls} and observe that
\begin{align*}
	Q_\pm \partial _x\big[ |u|^2u\big] &=2Q_\pm \big[ |u|^2\partial_xu\big] +Q_\pm \big[ u^2\partial_x\ol{u}\big] \\
	&=2|u|^2\partial_xQ_\pm u+\mathcal{R}_1^\pm ,\\
	Q_\pm \partial_x\big[ \mathcal{H}(|u|^2)u\big] &=Q_\pm \big[ \mathcal{H}(\ol{u}\partial_xu)u+\mathcal{H}(u\partial_x\ol{u})u+\mathcal{H}(|u|^2)\partial_xu\big] \\
	&=|u|^2\partial_xQ_\pm \mathcal{H}u+\mathcal{H}(|u|^2)\partial_xQ_\pm u +\mathcal{R}_2^\pm \\
	&=\big[ {\mp} i|u|^2 +\mathcal{H}(|u|^2)\big] \partial _xQ_\pm u +\mathcal{R}_2^\pm ,
\end{align*}
where 
\begin{align*}
	\mathcal{R}_1^\pm &:=Q_\pm \big[ u^2\partial_x\ol{u}\big] +2[Q_\pm ,|u|^2]\partial_xu,\\
	\mathcal{R}_2^\pm &:= Q_\pm \big[ \mathcal{H}(u\partial_x\ol{u})u\big] +[Q_\pm ,\mathcal{H}(|u|^2)]\partial_xu+ \Big( Q_\pm \big[ \mathcal{H}(\ol{u}\partial_xu)u\big] -|u|^2\partial_xQ_\pm \mathcal{H}u \Big) .
\end{align*}
Therefore, we have
\begin{align}
	(\partial _t-i\partial_x^2)Q_\pm u \ &=\ 2i \palbe(|u|^2)\partial _xQ_\pm u +\alpha \mathcal{R}_1^\pm +\beta \mathcal{R}_2^\pm ,\label{eq:upm}
\end{align}
where
\begin{align*}
		\palbe(|u|^2)&:	=\Big[ \Big( {-}i\alpha \mp \frac{\beta}{2}\Big) \mathrm{Id}  -i\frac{\beta}{2}\mathcal{H}\Big] (|u|^2)=-i\alpha |u|^2 \mp \frac\beta2 |u|^2 -i\frac\beta2 \mathcal{H}(|u|^2).
\end{align*}
%
%
%
%
Now, the high-low interactions in $\mathcal{R}_1^\pm$ and $\mathcal{R}_2^\pm$ are not problematic.
Indeed, the first terms (those of $u^2\partial_x\ol{u}$ type) have good non-resonance property; the second terms and the last term of $\mathcal{R}_2^\pm$ have no high-low interactions due to their commutator structure.
Then, we wish to treat the main term $2i \palbe(|u|^2)\partial _xQ_\pm u$ by applying suitable gauge transformations.
Note that we need to define the gauges for $Q_+u$ and $Q_-u$ separately when $\beta\neq 0$, since $P^+_{\alpha,\beta}(|u|^2)$ and $P^-_{\alpha,\beta}(|u|^2)$ are different. 
The previous results (see, e.g., \cite{Tao04, KeTa06}) suggest that the high-low interactions of the nonlinear term $F[u]\partial_xP_\pm u$ can be removed by the gauge transformation $P_\pm u\mapsto \exp (\frac{1}{2i}\partial_x^{-1}F[u])P_\pm u$.
Hence, we come to the following definition of our gauge transformations:
\begin{defn}\label{def:gauge}[Gauge transformations]
Let $\psi:\R \to \R$ be a compactly-supported real-valued smooth function satisfying $\int_{\mathbb{R}}\psi (z)\,dz =1$.
Define the anti-derivative operators $\p_x^{-1}:L^1(\R)\to L^\infty(\R)$ and $\wt{\p}_x^{-1}:L^1_{\rm loc}(\R)\to W^{1,1}_{\rm loc}(\R)$ by
\[ \p_x^{-1}f(x):=\int_{-\infty}^xf(y)\,dy,\qquad \wt{\p}_x^{-1}g(x):=\int_{\R}\psi(z)\int_z^xg(y)\,dy\,dz \qquad (x\in \R).\]
Note that $\p_x\p_x^{-1}f=\p_x^{-1}\p_xf=f$, $\p_x\wt{\p}_x^{-1}g=g$ and $\wt{\p}_x^{-1}\p_xg=g-\int_{\mathbb{R}}\psi (z)g(z)\,dz$.
Then, define the gauge transformations $u(t)\mapsto v_\pm (t)$ by
\begin{gather*}
	v_\pm (t):=e^{\rho ^\pm [u(t)]}Q_{\pm } u(t), \qquad \rho ^\pm [u]:=\Big( {-}i\alpha \mp \frac\beta 2\Big) \p_x^{-1}\big[ |u|^2\big] -i\frac\beta 2 \wt{\p}_x^{-1}\big[ \mathcal{H}(|u|^2)\big] .
\end{gather*}
\end{defn}
\begin{rem}
In \cite{KL-LWP} we used simpler transformations
\begin{gather}\label{def:gauge0}
e^{\rho_0^\pm[u]}Q_\pm u,\qquad \rho_0^\pm[u]:={\rm Re}\, \rho^\pm[u] = \mp \frac{\beta}{2}\p_x^{-1}\big[ |u|^2\big] ,
\end{gather}
which treat only $\beta \mathcal{H}(\ol{u}\partial_xu)u$.
In the present work, we also need to treat $\alpha |u|^2\partial_xu$, $\beta \mathcal{H}(|u|^2)\partial_xu$ to obtain an optimal decay in the energy estimates of weighted norms (see Remark~\ref{rem:gauge} below).
Also, we partially use the anti-derivative $\wt{\p}_x^{-1}g$ defined above in addition to the standard one $\p_x^{-1}f$.
It has been used when $g$ is not in $L^1$ (see, e.g., \cite{BuPl08}), which is our case as the term $\mathcal{H}(|u|^2)$ in $\palbe (|u|^2)$ may not be integrable due to unboundedness of the Hilbert transformation on $L^1$.
Note that $\mathcal{H}(|u|^2)\in L^{1+}\subset L^1_{\rm loc}$ for $u\in H^{0+}$, by the Sobolev embedding and boundedness of $\mathcal{H}$ on $L^p$, $p\in (1,\infty)$.
\end{rem}


We move on to the equation for $v_\pm$. We assume that the solutions are smooth.
Using the equation \eqref{eq:upm} and
denoting 
\begin{align*}
\mathcal{R}_3^\pm &:= \alpha \mathcal{R}_1^\pm +\beta \mathcal{R}_2^\pm \\
&\;=\alpha Q_{\pm} \big[ u^2\partial_x\ol{u}\big] + \beta Q_{\pm} \big[ \mathcal{H}(u\partial_x\ol{u})u\big] + \beta \Big( Q_{\pm} \big[ \mathcal{H}(\ol{u}\partial_xu)u\big] -|u|^2\partial_xQ_{\pm}\mathcal{H}u \Big) \\
&~\quad +[Q_{\pm} ,2\alpha |u|^2+\beta \mathcal{H}(|u|^2) ]\partial_xu,
\end{align*}
we have 
\begin{align*}
	(\partial _t-i\partial_x^2)v_\pm &= e^{\rho ^\pm [u]}\Big[ 2i\palbe(|u|^2)Q_\pm \p_xu +\mathcal{R}_3^\pm -2i (\p_x \rho ^\pm) Q_{\pm} \p_x u \\
	&\hspace{5cm} +(\p_t \rho ^\pm -i \p_x^2\rho ^\pm) Q_{\pm} u-i(\p_x \rho ^\pm )^2 Q_{\pm} u\Big] .
\end{align*}
Moreover, we have
\begin{gather*}
	\p_x \rho ^\pm =\palbe (|u|^2),\qquad \p_x^2\rho ^\pm = \palbe (\ol{u}\p_xu+u\p_x\ol{u}),
\end{gather*}
and from the equation \eqref{kdnls},
\begin{align*}
\p_t(|u|^2)&=2\mathrm{Re}\, \Big\{ \Big( i\p_x^2u+\alpha \partial_x \big[ |u|^2u\big] +\beta \partial _x\big[ \mathcal{H}(|u|^2)u\big] \Big) \ol{u}\Big\} \\
&=\partial _x\Big(2\mathrm{Re}\,(i\ol{u}\p_xu)+\frac{3}{2}\alpha |u|^4+2\beta \mathcal{H}(|u|^2)|u|^2\Big) -\beta \mathcal{H}(|u|^2)\partial_x(|u|^2),
\end{align*}
which implies
\begin{gather*}
	\begin{aligned}
		\p_t\rho ^\pm
		&=\Big( {-}i\alpha \mp \frac\beta 2\Big) \p_x^{-1}\big[ \p_t (|u|^2)\big]  -i\frac\beta 2 \wt{\p}_x^{-1}\big[ \mathcal{H}(\p_t(|u|^2))\big] \\
		&=iP^\pm _{\alpha,\beta} (\ol{u}\p_xu-u\p_x\ol{u}) +P^\pm _{\alpha,\beta}\Big[ \frac{3}{2}\alpha |u|^4+2\beta \mathcal{H}(|u|^2)|u|^2\Big] \\
		&\quad +\beta \Big( i\alpha \pm \frac{\beta}{2}\Big) \p_x^{-1}\big[ \mathcal{H}(|u|^2)\p_x(|u|^2)\big] \\
		&\quad +i\frac{\beta}{2}\int_{\mathbb{R}}\psi(z)\mathcal{H}\Big[ 2\mathrm{Re}\,(i\ol{u}\p_xu)+\frac{3}{2}\alpha |u|^4+2\beta \mathcal{H}(|u|^2)|u|^2\Big] (z)\,dz \\
		&\quad +i\frac{\beta^2}{2} \wt{\p}_x^{-1}\mathcal{H}\big[ \mathcal{H}(|u|^2)\partial_x(|u|^2)\big] .
	\end{aligned}
\end{gather*}
Considering the cancellation effect in $\p_t\rho ^\pm -i\p_x^2\rho ^\pm $, the worst term $P^\pm_{\alpha ,\beta}(\ol{u}\p_xu)$ disappears and instead we have $-2iP^\pm_{\alpha ,\beta}(u\p_x\ol{u})$,
\begin{align*}
\p_t\rho ^\pm -i\p_x^2\rho ^\pm &= -2\alpha u\p_x\ol{u}  \pm i \beta  u\p_x\ol{u} -\beta \mathcal{H}(u\p_x\ol{u}) \\
		&\quad +i\frac{\beta^2}{2}\wt{\p}_x^{-1}\mathcal{H}\big[ \mathcal{H}(|u|^2)\partial_x(|u|^2)\big] \\
		&\quad +i\frac{\beta}{2} \int_{\mathbb{R}}\psi(z)\mathcal{H}\Big[ 2\mathrm{Re}\,(i\ol{u}\p_xu)+\frac{3}{2}\alpha |u|^4+2\beta \mathcal{H}(|u|^2)|u|^2\Big] (z)\,dz \\
		&\quad +\mathcal{R}_4^\pm ,
\end{align*}
where
\begin{align*}
	\cR_4^\pm &:= P^\pm _{\alpha,\beta}\Big[ \frac{3}{2}\alpha |u|^4 +2\beta \mathcal{H}(|u|^2)|u|^2\Big] +\beta \Big( i\alpha \pm \frac{\beta}{2}\Big) \p_x^{-1}\big[ \mathcal{H}(|u|^2)\partial_x(|u|^2)\big] .
\end{align*}
Thus we have
\begin{align}
	(\partial _t-i\partial_x^2)v_\pm &=e^{\rho ^\pm [u]}\mathcal{R}_5^\pm +G_\pm [u] v_\pm \label{eq:v-gauge-transform} \\
		&\quad +i\frac{\beta^2}{2}\wt{\p}_x^{-1}\mathcal{H}\big[ \mathcal{H}(|u|^2)\partial_x(|u|^2)\big] \cdot v_\pm \label{eq:v-energy1}\\
		&\quad +i\frac{\beta}{2} \int_{\mathbb{R}}\psi(z)\mathcal{H}\Big[ 2\mathrm{Re}\,(i\ol{u}\p_xu)+\frac{3}{2}\alpha |u|^4+2\beta \mathcal{H}(|u|^2)|u|^2\Big] (z)\,dz \cdot v_\pm , \label{eq:v-energy2}
\end{align}
where
\begin{align}\label{eq:def-r5}
\begin{aligned}
	\mathcal{R}_5^\pm&:=\mathcal{R}_3^\pm +\Big( {-}2\alpha u\p_x\ol{u}  \pm i \beta  u\p_x\ol{u} -\beta \mathcal{H}(u\p_x\ol{u}) \Big) Q_\pm u \\
&\;=\alpha \Big( Q_\pm \big[ u^2\p_x\ol{u}\big] - 2u\p_x\ol{u} Q_\pm u \Big) \\
&\quad + \beta \Big( Q_\pm \big[ \mathcal{H}(u\p_x\ol{u})u\big] -(Q_\pm\mathcal{H}u)(\p_x \ol{u}) u - \mathcal{H}(u\p_x\ol{u}) Q_\pm u \Big) \\
&\quad +\beta \Big( Q_\pm \big[ u\mathcal{H}(\ol{u}\p_xu)\big]  -|u|^2Q_\pm \mathcal{H} \p_xu \Big)  \\
&\quad +[Q_\pm ,2\alpha |u|^2 +\beta \mathcal{H}(|u|^2)]\partial_xu
\end{aligned}
\end{align}
and
\begin{align}\label{eq:def-gpm}
\begin{aligned}
	G_\pm [u]&:=\mathcal{R}_4^\pm -i\big[ P_{\alpha,\beta}^\pm (|u|^2)\big] ^2 \\
&\;= P^\pm _{\alpha,\beta}\Big[ \frac{3}{2}\alpha |u|^4+2\beta \mathcal{H}(|u|^2)|u|^2\Big] -i\big[ P_{\alpha,\beta}^\pm (|u|^2)\big] ^2+\beta \Big( i\alpha \pm \frac{\beta}{2}\Big) \p_x^{-1}\big[ \mathcal{H}(|u|^2)\partial_x(|u|^2)\big] .
\end{aligned}
\end{align}

\section{Preliminaries}\label{sec:lemmas}
In this section, we collect several lemmas that are used throughout this paper. We begin with a property of the linear Schr\"odinger equation.
\begin{lemma}\label{lem:linear}
	The Schr\"odinger semigroup satisfies the inequalities
	\begin{align}\label{eq:decay-sch}
	\begin{aligned}
	\left\|e^{it\p_x^2}g \right\|_{L^\infty} &\les t^{-\frac12} \|\wh{g}\|_{L_\xi^\infty} + t^{-\frac12 -\gamma} \|g\|_{H^{0,\gamma'}},\\
	\left\|e^{it\p_x^2}g \right\|_{W^{1,\infty}} &\les t^{-\frac12} \|\bra{\xi}\wh{g}\|_{L_\xi^\infty} + t^{-\frac12 -\gamma} \|g\|_{H^{1,\gamma'}}
	\end{aligned}
	\end{align}
for any $\gamma' > \frac12 + 2\gamma$.
\end{lemma}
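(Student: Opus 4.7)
The proof is based on the standard metaplectic factorization
\[ e^{it\p_x^2}g(x) \;=\; \frac{e^{ix^2/(4t)}}{\sqrt{2it}}\, \mathcal{F}\!\left[ e^{iy^2/(4t)} g(y) \right]\!\left(\tfrac{x}{2t}\right), \]
which follows from completing the square in the kernel representation $e^{it\p_x^2}g(x) = (4\pi it)^{-1/2}\int e^{i(x-y)^2/(4t)} g(y)\,dy$.  Writing $e^{iy^2/(4t)} = 1 + (e^{iy^2/(4t)}-1)$ splits the evolution into a leading profile and a remainder,
\[ e^{it\p_x^2}g(x) \;=\; \frac{e^{ix^2/(4t)}}{\sqrt{2it}}\, \wh{g}\!\left(\tfrac{x}{2t}\right) + R(x,t), \qquad R(x,t):=\frac{e^{ix^2/(4t)}}{\sqrt{2it}}\, \mathcal{F}\!\left[(e^{iy^2/(4t)}-1)g(y)\right]\!\left(\tfrac{x}{2t}\right). \]
The leading term is immediately controlled by $t^{-1/2}\|\wh g\|_{L^\infty_\xi}$, which produces the first contribution in \eqref{eq:decay-sch}.

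For the remainder, I use the $L^1 \to L^\infty$ Fourier bound together with the pointwise inequality
\[ \bigl| e^{iy^2/(4t)} - 1 \bigr| \;\les\; \min\!\bigl(1,\tfrac{|y|^2}{t}\bigr) \;\les\; \left(\tfrac{|y|^2}{t}\right)^{\!\gamma}\qquad (\gamma\in[0,1]). \]
Therefore
\[ |R(x,t)| \;\les\; t^{-\frac12}\,\bigl\| (e^{iy^2/(4t)}-1)g(y) \bigr\|_{L^1_y} \;\les\; t^{-\frac12-\gamma}\, \bigl\| |y|^{2\gamma} g \bigr\|_{L^1_y}, \]
and Cauchy--Schwarz with any $\ve>0$ such that $2\gamma+\tfrac12+\ve\le \gamma'$ yields
\[ \bigl\| |y|^{2\gamma} g\bigr\|_{L^1_y} \;\le\; \bigl\|\bra{y}^{2\gamma+\frac12+\ve}g\bigr\|_{L^2_y}\, \bigl\|\bra{y}^{-\frac12-\ve}\bigr\|_{L^2_y} \;\les\; \|g\|_{H^{0,\gamma'}}. \]
This uses only the assumption $\gamma'>\tfrac12+2\gamma$, and closes the first estimate of \eqref{eq:decay-sch}.

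The $W^{1,\infty}$ bound follows by applying the first estimate to $\p_x g$ after commuting: $\p_x e^{it\p_x^2}g = e^{it\p_x^2}\p_x g$.  Since $\wh{\p_x g}(\xi) = i\xi\,\wh g(\xi)$, we have $\|\wh{\p_x g}\|_{L^\infty_\xi}\les \|\bra\xi\,\wh g\|_{L^\infty_\xi}$, and evidently $\|\p_x g\|_{H^{0,\gamma'}}\le \|g\|_{H^{1,\gamma'}}$; combining with the $L^\infty$ bound on $e^{it\p_x^2}g$ gives the second estimate.

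There is no serious obstacle here; the only care required is in the accounting of the weight index, which fixes the precise condition $\gamma'>\tfrac12+2\gamma$ (one power of $\bra{y}^{2\gamma}$ from the phase oscillation, plus the $\bra{y}^{1/2+\ve}$ needed to pass from $L^1$ to the weighted $L^2$ norm via Cauchy--Schwarz).
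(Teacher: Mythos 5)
Your proof is correct and follows essentially the same route as the paper: complete the square in the Schr\"odinger kernel (which is precisely your metaplectic factorization), split $e^{iy^2/(4t)}=1+(e^{iy^2/(4t)}-1)$, control the main term via $\|\wh g\|_{L^\infty_\xi}$, estimate the remainder using $|e^{iy^2/(4t)}-1|\les(|y|^2/t)^\gamma$, and pass from $L^1_y$ to the weighted $L^2$ norm via Cauchy--Schwarz. The $W^{1,\infty}$ bound via commutation $\partial_xe^{it\partial_x^2}=e^{it\partial_x^2}\partial_x$ is also the expected step; your bookkeeping of the index condition $\gamma'>\tfrac12+2\gamma$ is cleanly spelled out, which the paper leaves implicit in the phrase ``Cauchy--Schwarz.''
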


\begin{proof}
This follows from the identity
\begin{align*}
e^{it\partial_x^2}g(x)&=\frac{1}{(4\pi it)^{1/2}}\int_{\mathbb{R}}e^{i\frac{(x-y)^2}{4t}}g(y)\,dy \\
&=\frac{1}{(4\pi it)^{1/2}}e^{i\frac{x^2}{4t}}\left( \int_{\mathbb{R}}e^{-i\frac{x}{2t}y}g(y)\,dy +\int_{\mathbb{R}}e^{-i\frac{x}{2t}y} (e^{i\frac{y^2}{4t}}-1) g(y)\,dy\right) \\
&=\frac{1}{(2it)^{1/2}}e^{i\frac{x^2}{4t}}\wh{g}\left( \frac{x}{2t}\right) +O\left( t^{-\frac12-\gamma}\| |y|^{2\gamma}g(y)\|_{L^1_y}\right) ,\qquad \gamma>0
\end{align*}
and Cauchy-Schwarz in $x$.
\end{proof}

%

\subsection{A priori assumption}
We set a priori assumption
 \begin{align}\label{eq:assumption-apriori}
	\|u\|_{X_T} &:= \sup_{t \in [0,T]} \left( \bra{t}^{-\de} \|u(t)\|_{H^2} + \bra{t}^{-2\de} \|xf(t)\|_{H^1} \right) \le \ve_1
\end{align}  
for some  $T, \ve_1 ,\delta >0$.
We always assume $\ve\leq \ve_1\leq 1$, where $\ve$ is the size of initial data in \eqref{thm:initial}, and that $\delta$ is sufficiently small.

Under the a priori assumption \eqref{eq:assumption-apriori}, we have the following proposition.
\begin{prop}\label{prop:linfty}
	Assume that $u \in C([0,T], H^{2}\cap H^{1,1})$ is a solution to \eqref{kdnls} satisfying the a priori assumption \eqref{eq:assumption-apriori}. Let $f(t):=e^{-it\p_x^2}u(t)$.
	Then, for $0\leq t\leq T$ we have
	\begin{align*}
		\left\|\bra{\xi}\wh{f}(t,\xi)\right\|_{L_\xi^\infty}  \les \ve_1.
	\end{align*}
	Furthermore, there exists $\kappa>0$ such that for any dyadic $N\geq 1$ and $0\leq t\leq T$ we have
	\begin{equation*}
		\left\| \varrho_N(\xi) \bra{\xi}\wh{f}(t,\xi)\right\|_{L_\xi^\infty}  \les N^{-\kappa}\ve_1.
	\end{equation*}
\end{prop}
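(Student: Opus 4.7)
The a priori weighted bounds alone give only $\bra t$-growing control on $\|\bra\xi\wh f\|_{L^\infty_\xi}$ via Gagliardo--Nirenberg, so I must use the equation. The starting point is the Fourier-side ODE
\begin{equation*}
\p_t\wh f(t,\xi) \;=\; i\xi\, e^{-it\xi^2}\cF\bigl[\alpha|u|^2u+\beta\cH(|u|^2)u\bigr](\xi),
\end{equation*}
which, after substituting $\wh u = e^{it\xi^2}\wh f$, becomes oscillatory triple convolutions in the $\wh f_j$'s with phase $\Phi(\xi,\xi_1,\xi_2)=2(\xi_1-\xi)(\xi_1-\xi_2)$ and stationary set $\{\xi_1=\xi\}\cup\{\xi_1=\xi_2\}$.

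A smooth cutoff around these resonant lines, combined with integration by parts in $\xi_1,\xi_2$ on the complement, yields the normal-form decomposition
\begin{equation*}
\p_t\wh f(t,\xi) \;=\; \frac{i\xi}{2t}\bigl[\alpha|\wh f(\xi)|^2+\beta\cH(|\wh f|^2)(\xi)\bigr]\wh f(\xi) \;+\; R(t,\xi),
\end{equation*}
with a remainder satisfying $\bra\xi|R(t,\xi)|\les\bra t^{-1-\ka}\ve_1^3$ for some $\ka>0$. The $\bra\xi$-weight absorption together with the $\bra t^{-1-\ka}$ decay comes from trading the $\bra t^{-1/2}$ gain of each stationary-phase integration by parts against the cost $\p_\xi\wh f=-i\wh{xf}$, which is controlled in $L^2_\xi$ by the weighted norm $\|xf\|_{H^1}\les\bra t^{2\de}\ve_1$; this balance requires $\de$ small compared to $\ka$. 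Crucially, $\alpha|\wh f|^2+\beta\cH(|\wh f|^2)$ is real-valued, so the resonant term is purely imaginary times $\wh f$ and cancels in $\p_t|\wh f|^2$, giving $\p_t\bra\xi|\wh f(t,\xi)|\les\bra t^{-1-\ka}\ve_1^3$. Integrating from $t=0$ and using the initial bound
\begin{equation*}
\|\bra\xi\wh\phi\|_{L^\infty_\xi}\;\les\;\|\bra x\sqrt{1-\p_x^2}\phi\|_{L^2_x}\;\les\;\|\phi\|_{H^1}+\|x\phi\|_{H^1}\;\les\;\ve
\end{equation*}
(via $\|\wh g\|_{L^\infty}\les\|g\|_{L^1}\les\|\bra xg\|_{L^2}$) yields the first bound $\|\bra\xi\wh f(t)\|_{L^\infty_\xi}\les\ve_1$.

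For the frequency-localized bound with $\xi\sim N$, observe that any decomposition $\xi=\xi_1-\xi_2+\xi_3$ appearing in the cubic integrand forces at least one frequency $|\xi_j|\sim N$. Using the just-proven pointwise bound $|\wh f(\xi_j)|\les\ve_1/\bra{\xi_j}$ on that high-frequency factor contributes an extra $N^{-1}$ power in the multilinear estimate. Combined with the initial-data decay $\|\varrho_N\bra\xi\wh\phi\|_{L^\infty_\xi}\les N^{-1/2}\ve$ (obtained by Gagliardo--Nirenberg from $\|\varrho_N\bra\xi\wh\phi\|_{L^2_\xi}\les N^{-1}\|\phi\|_{H^2}$ via Bernstein and $\|\p_\xi(\varrho_N\bra\xi\wh\phi)\|_{L^2_\xi}\les\|\phi\|_{H^2}+\|x\phi\|_{H^1}$) and the same ODE argument as above, one concludes $\|\varrho_N\bra\xi\wh f\|_{L^\infty_\xi}\les N^{-\ka}\ve_1$ uniformly in $t$, for some $\ka\in(0,1/2]$.

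The main obstacle is the rigorous resonance extraction, namely producing a remainder $R$ with the joint properties of $\bra\xi$-weight absorption and $\bra t^{-1-\ka}$ decay. The non-locality of $\cH$ and the outer $\p_x$ in the nonlinearities force a delicate case analysis by dyadic blocks of $(\xi,\xi_1,\xi_2)$, separating high-low from high-high interactions and carefully exploiting the sign structure $\mathrm{sgn}(\xi)$ in the Hilbert-transform term. Balancing the polynomial loss $\bra t^{2\de}$ against the stationary-phase gain $\bra t^{-1/2}$ dictates the interplay between $\de$ and $\ka$ in the final quantitative bound; this is what ties the preliminary pointwise estimate to the bootstrap setup and to the sharper asymptotic analysis carried out later in Section~\ref{sec:asymptotic}.
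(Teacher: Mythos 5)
Your high-level plan matches the paper's: pass to the Fourier-side ODE, extract the space-time resonant contribution $\frac{i\xi}{2t}[\alpha|\wh f(\xi)|^2+\beta\cH(|\wh f|^2)(\xi)]\wh f(\xi)$ as a real phase rotation, bound the remainder in $L^\infty_\xi$ with a $\bra\xi$-weight and a rate $t^{-1-\kappa}$, and use the reality of the phase to cancel the resonant term in $\p_t|\wh f|^2$. That is exactly Lemma~\ref{lem:asymptotic} in the paper.

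The genuine gap is that the remainder bound --- the heart of the proof --- is only asserted, not established. You propose ``a smooth cutoff around the resonant lines combined with integration by parts in $\xi_1,\xi_2$ on the complement,'' but this is precisely where the difficulties concentrate: the $\cH$-nonlinearity introduces a $\sgn(\eta)$ multiplier whose derivative is a delta at $\eta=0$, and $\eta=0$ is exactly one of the two stationary lines, so the naive frequency-side integration by parts hits a singularity of the symbol at the same place it is trying to exploit. You acknowledge this obstacle in the last paragraph but do not resolve it. The paper circumvents it entirely by representing the cubic term in \emph{physical} space (the $\cF_{\eta,\sigma}$ computation in Section~\ref{sec:asymptotic}), where the $\sgn$ factor becomes a benign $\sgn(z)$ in an $x,y,z$-integral and the space-time-resonant subtraction produces the factor $e^{-iyz/2t}-1$, which is bounded pointwise by $(|yz|/2t)^\zeta$ and paid for with $\|\bra x^{2\zeta}f\|_{L^1}$ from the weighted a priori norm. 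Without something like that device, your integration-by-parts program as stated does not close. A smaller but real error: you claim to integrate $\p_t\bra\xi|\wh f|\lesssim \bra t^{-1-\kappa}\ve_1^3$ ``from $t=0$,'' but any $t^{-1-\kappa}$ bound on $R$ cannot hold down to $t=0$ (the resonant term itself contains a $1/t$); the paper proves the remainder bounds only for $t\geq 1$ and covers $[0,1]$ by the direct Sobolev estimate $\|\bra\xi\wh f(t)\|_{L^\infty_\xi}\les\|f(t)\|_{H^{1,1}}\les\ve_1$.

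Your treatment of the frequency-localized bound is also a genuinely different (and sketchier) route. You propose gaining $N^{-1}$ by putting the just-proven pointwise decay on whichever input factor has frequency $\gtrsim N$; the paper instead notes $\|\varrho_N\bra\xi\wh f(t)\|_{L^\infty_\xi}\lesssim N^{-1/3}\bra t^{5\delta/3}\ve_1$ by interpolation with the weighted energy, then interpolates \emph{this} (raised to a small power $\theta$) against the flat bound $\ve_1$ inside the time integral of the remainder, balancing the $N^{-\theta/3}$ gain against the $t^{5\delta\theta/3}$ loss. The paper's trick requires no further multilinear frequency analysis, whereas your route needs all the dyadic cases to close with weights --- plausible, but not shown, and it also requires the remainder bound you have not yet supplied.
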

This proposition will be proved in Section~\ref{sec:asymptotic}.
By Lemma~\ref{lem:linear}, we have the crucial decay estimates
\begin{align}
\|u(t)\|_{W^{1,\infty}} &\les \bra{t}^{-\frac12} \ve_1, \label{eq:linear} \\
\| P_Nu(t)\|_{W^{1,\infty}} &\les N^{-\kappa}\bra{t}^{-\frac12} \ve_1\quad (N\geq 1). \label{eq:linear-local}
\end{align}
To show \eqref{eq:linear-local}, we follow the proof of Lemma~\ref{lem:linear} with $g=P_N\bra{\p_x}f(t)$. Indeed, for $t\geq 1$ we estimate the second term of right-hand side of \eqref{eq:decay-sch}, taking  $0< \gamma< \frac18$ such that $\gamma\geq \frac74\delta$, as
\begin{align*}
&t^{-\frac12-\gamma}\| |x|^{2\gamma}P_N\bra{\p_x}f(t)\|_{L^1}\\
&\les t^{-\frac12-\gamma}\| \bra{x}^{\frac34}P_N\bra{\p_x}f(t)\|_{L^2}\\
&\les t^{-\frac12-\gamma}\Big( \| xP_N \bra{\p_x}f(t)\|_{L^2}^{\frac34}\| P_Nf(t)\|_{H^1}^{\frac14}+\| P_Nf(t)\|_{H^1}\Big) \\
&\les t^{-\frac12-\gamma}\Big\{ \Big( \| xf(t)\|_{H^1}+\| [x,P_N\bra{\p_x}]f(t)\|_{L^2}\Big) ^{\frac34}\Big( N^{-1}\| u(t)\|_{H^2}\Big) ^{\frac14} +N^{-1}\| u(t)\|_{H^2}\Big\} \\
&\les N^{-\frac14}t^{-\frac12-\gamma+\frac74\de}\ve_1+N^{-1}t^{-\frac12-\gamma+\delta}\ve_1 \les N^{-\frac14}t^{-\frac12}\ve_1,
\end{align*}
where we have used
\[
 \| [x,P_N\bra{\p_x}]f(t)\|_{L^2}= \left\| \Big[\p_\xi, \varrho_N(\xi)\bra{\xi}\Big]\wh{f}(t) \right\|_{L^2}\lesssim \| \wh{f}(t)\|_{L^2}.
 \]
Note that the case $t\in [0,1]$ follows directly from the embedding $H^{\frac53}\hookrightarrow W^{1,\infty}$, for instance.

The localized decay estimate \eqref{eq:linear-local} with a decay in $N$ will also be useful throughout the proof. 
Here, we mention that 
\begin{align}\label{eq:linear-ppm}
\| Q_\pm u(t)\|_{W^{1,\infty}}\les \bra{t}^{-\frac12}\ve_1.
\end{align}
In fact, since $Q_\pm\wt{P}_N$ is bounded on $L^\infty$ uniformly in $N$, by \eqref{eq:linear-local} we can estimate as
\[ \| Q_\pm u(t)\|_{W^{1,\infty}}\leq \sum _{N\geq 1}\| P_Nu(t)\|_{W^{1,\infty}}\les \sum_{N\geq 1}N^{-\kappa}\bra{t}^{-\frac12}\ve_1\les \bra{t}^{-\frac12}\ve_1.\]

The next two lemmas are important consequences of the $L^\infty$ decay \eqref{eq:linear}.
\begin{lemma}\label{lem:esti-null}
It holds that
\[ \p_x(u(t)\ol{v(t)})=\frac{1}{2it}\Big( \ol{v(t)}e^{it\p_x^2}xf(t)-u(t)\ol{e^{it\p_x^2}xg(t)}\Big) ,\]
where $f(t):=e^{-it\p_x^2}u(t)$ and $g(t):=e^{-it\p_x^2}v(t)$.
In particular, if $u$ satisfies a priori assumption \eqref{eq:assumption-apriori}, then we have
\begin{align}\label{eq:esti-null}
	\| \p_x(|u(t)|^2) \|_{H^1} \les \bra{t}^{-\frac32 +2\de}\ve_1^2.
\end{align}
\end{lemma}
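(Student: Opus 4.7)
The first identity is essentially the Galilean identity written out. Recall that $f=e^{-it\p_x^2}u$ gives $u=e^{it\p_x^2}f$, and the standard conjugation $e^{it\p_x^2}x\,e^{-it\p_x^2}=x+2it\p_x$ yields
\[
e^{it\p_x^2}(xf)=xu+2it\,\p_xu,\qquad e^{it\p_x^2}(xg)=xv+2it\,\p_xv,
\]
so $\p_xu=\tfrac{1}{2it}(e^{it\p_x^2}(xf)-xu)$ and similarly for $v$. Plugging these into the Leibniz expansion $\p_x(u\ol{v})=(\p_xu)\ol{v}+u\,\ol{\p_xv}$ and using $\ol{\tfrac{1}{2it}}=-\tfrac{1}{2it}$, the two $\tfrac{x}{2it}u\ol{v}$ contributions cancel and the claimed identity drops out.

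For the estimate \eqref{eq:esti-null}, I specialize to $v=u$ (hence $g=f$) and observe that the identity reduces to
\[
\p_x(|u|^2)=\frac{1}{t}\,\mathrm{Im}\bigl(\ol{u}\cdot e^{it\p_x^2}(xf)\bigr).
\]
I then apply the product-rule bound $\|FG\|_{H^1}\les \|F\|_{W^{1,\infty}}\|G\|_{H^1}$ with $F=\ol{u}$ and $G=e^{it\p_x^2}(xf)$, together with the fact that $e^{it\p_x^2}$ is an $H^1$ isometry. This yields
\[
\|\p_x(|u|^2)\|_{H^1}\les \tfrac{1}{t}\|u\|_{W^{1,\infty}}\|xf\|_{H^1}.
\]
Inserting the linear decay estimate \eqref{eq:linear}, $\|u(t)\|_{W^{1,\infty}}\les \bra{t}^{-1/2}\ve_1$, and the weighted bound from the a priori assumption \eqref{eq:assumption-apriori}, $\|xf(t)\|_{H^1}\les \bra{t}^{2\de}\ve_1$, gives the desired bound for $t\geq 1$.

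The only thing to clean up is the regime $t\in[0,1]$, where the prefactor $\tfrac{1}{t}$ is singular. There the conclusion follows directly from the Sobolev algebra estimate
\[
\|\p_x(|u|^2)\|_{H^1}\les \|u\|_{H^2}^2 \les \bra{t}^{2\de}\ve_1^2,
\]
which is bounded by $\bra{t}^{-3/2+2\de}\ve_1^2$ since $\bra{t}\sim 1$ on $[0,1]$. I expect no real obstacle here; the slight subtlety is just bookkeeping the short-time regime, but using \eqref{eq:linear} only for $t\geq 1$ and the $H^2$ bound on $[0,1]$ glues the two pieces together.
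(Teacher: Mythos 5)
Your proof is correct and follows essentially the same route as the paper: the same pointwise identity (which the paper itself notes is the well-known $J$-operator fact, proved there by Fourier-side integration by parts rather than your direct conjugation), followed by the same H\"older/$W^{1,\infty}$--$H^1$ product estimate combined with \eqref{eq:linear} and the a priori bound on $\|xf\|_{H^1}$ for $t\geq 1$, and the crude $\|u\|_{H^2}^2$ bound on $[0,1]$. No gaps.
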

\begin{proof}
Although this is a well-known identity $2it\p_x(u\ol{v})=(Ju)\ol{v}-u(\ol{Jv})$ related to the operator $J:=x+2it\p_x$, it is worth noticing that it can be shown by an integration by parts on the Fourier side:
\begin{align*}
	\p_x (u\ol{v}) 
	&= -\frac{1}{2t}\mathcal F_\xi^{-1} \left[ \frac{1}{\sqrt{2\pi}}\int_{\R}  \p_\eta \big[ e^{-it[(\xi+\eta)^2 - \eta^2]}\big] \wh{f}(\xi+\eta) \ol{\wh{g}(\eta)}    d\eta \right]\\
	&= \frac1{2t}\mathcal F_\xi^{-1} \left[ \frac{1}{\sqrt{2\pi}}\int_{\R}   e^{-it[(\xi+\eta)^2 - \eta^2]} \p_\eta \left(\wh{f}(\xi+\eta) \ol{\wh{g}(\eta)} \right)    d\eta \right] \\
	&= \frac1{2it}\left(\ol{v}\, e^{it\p_x^2}xf - u\, \ol{e^{it\p_x^2}xg}\right) .
\end{align*}
Taking $u=v$ and using H\"older, \eqref{eq:assumption-apriori} and \eqref{eq:linear}, we see that
\[ \|\p_x(|u(t)|^2)\|_{H^1}\les \frac1t \|u(t)\|_{W^{1,\infty}} \|xf(t)\|_{H^1}\les t^{-1} \bra{t}^{-\frac12+2\de}\ve_1^2, \]
which shows \eqref{eq:esti-null} for $t\geq 1$.
The case $t\in [0,1]$ follows from the direct estimate $\| \p_x(|u|^2)\|_{H^1}\les \| u\|_{H^2}^2$.
\end{proof}

\begin{lemma}[$L^2$ energy estimate]\label{lem:L2energy}
Assume that $u \in C([0,T], H^{2}\cap H^{1,1})$ is a solution to \eqref{kdnls} satisfying the a priori assumption \eqref{eq:assumption-apriori}.
Then, for $0\leq t\leq T$ we have the following:\\
If $\beta >0$, then $\| u(t)\|_{L^2}$ is non-decreasing and 
\[ \| u(0)\|_{L^2}\leq \| u(t)\|_{L^2}\leq \min \big\{ \| u(0)\|_{L^2}+C\ve_1^3,\, \| u(0)\|_{L^2}e^{C\ve_1^2}\big\} .\]
If $\beta <0$, then $\| u(t)\|_{L^2}$ is non-increasing and 
\[ \| u(0)\|_{L^2}\geq \| u(t)\|_{L^2}\geq \max \big\{ \| u(0)\|_{L^2}-C\ve_1^3,\, \| u(0)\|_{L^2}e^{-C\ve_1^2}\big\} .\]
In particular, in both cases we have $\| u(t)\|_{L^2}\sim \| u(0)\|_{L^2}$. 
\end{lemma}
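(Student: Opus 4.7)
The starting point will be the $L^2$ energy identity
\[ \frac{d}{dt}\|u(t)\|_{L^2}^2 \;=\; \beta\,\|D_x^{1/2}(|u(t)|^2)\|_{L^2}^2, \]
which is already recorded at the beginning of the introduction. I would justify it rigorously by pairing \eqref{kdnls} against $\bar u$ in $L^2$: the linear dispersion $i\p_x^2u$ yields a purely imaginary pairing; the $\alpha$-nonlinearity $i\alpha\p_x[|u|^2u]$ contributes zero real part after integration by parts and the cancellation $\int|u|^2\p_x(|u|^2)\,dx = \tfrac12\int\p_x(|u|^4)\,dx=0$; and the non-local $\beta$-nonlinearity $i\beta\p_x[\mathcal H(|u|^2)u]$ produces $\beta\int\mathcal H(|u|^2)\p_x(|u|^2)\,dx = \beta\|D_x^{1/2}(|u|^2)\|_{L^2}^2$ after integration by parts and the relation $\mathcal H\p_x = D_x$. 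Non-negativity of the right-hand side immediately gives the claimed monotonicity of $\|u(t)\|_{L^2}$ according to the sign of $\beta$, so what remains is a quantitative control of the growth/decay rate.

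The core bilinear estimate I would use is, via the Plancherel interpolation $\|D_x^{1/2}g\|_{L^2}^2\le\|g\|_{L^2}\|\p_x g\|_{L^2}$ with $g=|u|^2$, H\"older $\||u|^2\|_{L^2}\le \|u\|_{L^\infty}\|u\|_{L^2}$, the linear $L^\infty$-decay \eqref{eq:linear} giving $\|u\|_{L^\infty}\lesssim \bra t^{-1/2}\varepsilon_1$, and crucially the null-form estimate of Lemma~\ref{lem:esti-null} giving $\|\p_x(|u|^2)\|_{L^2}\lesssim \bra t^{-3/2+2\delta}\varepsilon_1^2$:
\[ \|D_x^{1/2}(|u|^2)(t)\|_{L^2}^2 \;\lesssim\; \bra t^{-2+2\delta}\,\varepsilon_1^3\,\|u(t)\|_{L^2}. \]
Inserting this into the energy identity and dividing by $2\|u\|_{L^2}$ yields
\[ \Bigl|\tfrac{d}{dt}\|u(t)\|_{L^2}\Bigr|\;\lesssim\; \bra t^{-2+2\delta}\varepsilon_1^3, \]
whose right-hand side is time-integrable for $\delta$ small. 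Integrating from $0$ to $t$ produces the additive bound $\bigl|\|u(t)\|_{L^2}-\|u(0)\|_{L^2}\bigr|\lesssim \varepsilon_1^3$, which combined with the monotonicity gives the $\|u(0)\|_{L^2}+C\varepsilon_1^3$ half of the minima claimed in the lemma.

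For the multiplicative bound $\|u(t)\|_{L^2}\le \|u(0)\|_{L^2}e^{C\varepsilon_1^2}$, I would divide the energy identity by $\|u\|_{L^2}^2$ rather than $\|u\|_{L^2}$, producing
\[ \Bigl|\tfrac{d}{dt}\log\|u(t)\|_{L^2}^2\Bigr| \;\lesssim\; \frac{\bra t^{-2+2\delta}\varepsilon_1^3}{\|u(t)\|_{L^2}}, \]
and then use the monotonicity to bound the denominator from below by $\min(\|u(0)\|_{L^2},\|u(t)\|_{L^2})$ (which for $\beta<0$ is controlled via the additive bound by $\|u(0)\|_{L^2}-C\varepsilon_1^3$). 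Integrating in $t$ and exponentiating gives $\|u(t)\|_{L^2}\le \|u(0)\|_{L^2}\exp(C\varepsilon_1^3/\|u(0)\|_{L^2})$, which realises the bound $\|u(0)\|_{L^2}e^{C\varepsilon_1^2}$ in the regime $\|u(0)\|_{L^2}\gtrsim \varepsilon_1$. The anticipated main obstacle is precisely the regime $\|u(0)\|_{L^2}\ll \varepsilon_1$: there the denominator loss is too large for Gronwall, and one must observe that in that regime the additive bound already gives the stronger conclusion (since $\|u(0)\|_{L^2}+C\varepsilon_1^3$ is the active member of the minimum). The lemma's minimum formulation is what allows the two arguments to combine into a single statement; taking the minimum and using $\varepsilon_1\ll 1$ then gives $\|u(t)\|_{L^2}\sim \|u(0)\|_{L^2}$ in both signs of $\beta$.
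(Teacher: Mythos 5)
Your energy identity, monotonicity, and the additive bound $\bigl|\|u(t)\|_{L^2}-\|u(0)\|_{L^2}\bigr|\lesssim\ve_1^3$ are the same as the paper's, using the same null-form estimate $\|\p_x(|u|^2)\|_{H^1}\lesssim\bra{t}^{-3/2+2\delta}\ve_1^2$ together with the $L^\infty$ decay. So that half is fine.

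The multiplicative bound, however, has a genuine gap, and you have partly misdiagnosed it. You propose to obtain $\|u(t)\|_{L^2}\le \|u(0)\|_{L^2}e^{C\ve_1^2}$ by dividing the same estimate
\[
\tfrac{d}{dt}\|u(t)\|_{L^2}^2\lesssim \bra{t}^{-2+2\delta}\ve_1^3\|u(t)\|_{L^2}
\]
by $\|u\|_{L^2}^2$, which gives a log-Gronwall with source $\sim\ve_1^3/\|u\|_{L^2}$; this works only when $\|u(0)\|_{L^2}\gtrsim\ve_1$. You then claim that in the complementary regime $\|u(0)\|_{L^2}\ll\ve_1$ the additive bound is the active member of the minimum and so the multiplicative one need not be proved. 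That is wrong: since $e^{C\ve_1^2}\approx 1+C\ve_1^2$ with $\ve_1\ll 1$, one has $\|u(0)\|_{L^2}e^{C\ve_1^2}\approx\|u(0)\|_{L^2}+C\ve_1^2\|u(0)\|_{L^2}$, and for $\|u(0)\|_{L^2}\ll\ve_1$ this is \emph{smaller} than $\|u(0)\|_{L^2}+C\ve_1^3$. So the multiplicative bound is precisely the one carrying information in the small-data regime, and your argument does not establish it there. (For $\beta<0$ this is more than an aesthetic issue: the positive lower bound $D_\infty>0$ used in Section~\ref{sec:main} relies on the exponential form being available independently of the size of $\|\phi\|_{L^2}$.)

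The fix, which is what the paper does, is to estimate the dissipation term a \emph{second} way with a different H\"older pairing. Instead of $\|D_x(|u|^2)\|_{L^2}\||u|^2\|_{L^2}$, pair as $\|D_x(|u|^2)\|_{L^\infty}\||u|^2\|_{L^1}$, using $H^1\hookrightarrow L^\infty$ on the first factor and $\||u|^2\|_{L^1}=\|u\|_{L^2}^2$ on the second. By Lemma~\ref{lem:esti-null} (and boundedness of $\cH$ on $H^1$) this gives
\[
\int_\R D_x(|u|^2)\,|u|^2\,dx \;\lesssim\; \|\p_x(|u|^2)\|_{H^1}\,\|u\|_{L^2}^2 \;\lesssim\; \bra{t}^{-3/2+2\delta}\ve_1^2\,\|u(t)\|_{L^2}^2,
\]
which is quadratic in $\|u\|_{L^2}$ and has the factor $\ve_1^2$ rather than $\ve_1^3$. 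Feeding this into the energy identity yields directly $\frac{d}{dt}\|u\|_{L^2}^2\lesssim \beta\bra{t}^{-3/2+2\delta}\ve_1^2\|u\|_{L^2}^2$ for $\beta>0$ (and the analogous inequality for $\|u\|_{L^2}^{-2}$ when $\beta<0$), and Gronwall gives $\|u(0)\|_{L^2}e^{\pm C\ve_1^2}$ with no lower bound on $\|u(0)\|_{L^2}$ required. So you need two separate estimates of $\int D_x(|u|^2)|u|^2$, each adapted to one of the two claimed bounds, rather than a single estimate followed by division.
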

\begin{proof}
By the equation \eqref{kdnls} we have
\begin{align*}
		\frac{d}{dt}\| u(t)\|_{L^2}^2=\beta \int_{\R} D_x(|u(t)|^2)|u(t)|^2 dx= \beta \big\| D_x^{\frac12}(|u(t)|^2)\big\|_{L^2}^2,
\end{align*}
from which monotonicity of the $L^2$ norm follows.
By a priori assumption \eqref{eq:assumption-apriori}, \eqref{eq:linear}, and \eqref{eq:esti-null}, we estimate 
\begin{align*}
&\begin{aligned}
	\int_{\R} D_x(|u(t)|^2)|u(t)|^2 dx &\leq \normo{D_x(|u(t)|^2)}_{L^2}\normo{|u(t)|^2}_{L^2}\\
	& \les \bra{t}^{-\frac32 +2\de}\ve_1^2 \|u(t)\|_{L^\infty} \|u(t)\|_{L^2}\les \bra{t}^{-2+2\de}\ve_1^3\|u(t)\|_{L^2},
\end{aligned}
\intertext{and}
&\begin{aligned}
	\int_{\R} D_x(|u(t)|^2)|u(t)|^2 dx &\leq \normo{D_x(|u(t)|^2)}_{L^\infty}\normo{|u(t)|^2}_{L^1}\\
	& \les \normo{D_x(|u(t)|^2)}_{H^1}\normo{u(t)}_{L^2}^2\les \bra{t}^{-\frac32 +2\de}\ve_1^2 \|u(t)\|_{L^2}^2.
\end{aligned}
\end{align*}
This gives us that
\begin{align*}
	\frac{d}{dt}\| u(t)\|_{L^2} \les \beta \bra{t}^{-2+2\de}\ve_1^3,\;\; \mbox{ and } \;\;  \frac{d}{dt}\| u(t)\|_{L^2}^2\les \beta \bra{t}^{-\frac32+2\de}\ve_1^2\| u(t)\|_{L^2}^2
\end{align*}
when $\beta >0$, and that
\begin{align*}
	\frac{d}{dt}\| u(t)\|_{L^2} \gtrsim -|\beta| \bra{t}^{-2+2\de}\ve_1^3, \;\; \mbox{ and } \;\;   \frac{d}{dt}\| u(t)\|_{L^2}^{-2}\les |\beta| \bra{t}^{-\frac32+2\de}\ve_1^2\| u(t)\|_{L^2}^{-2}
\end{align*}
when $\beta <0$.
In each case, the desired a priori bounds are obtained by integrating the first inequality from $0$ to $t$ or applying Gronwall's lemma to the second inequality.
\end{proof}

\subsection{Useful estimates}
In this section, we introduce some useful lemmas that will be crucial ingredients in the main proof.

\begin{lemma}\label{lem:HLinfty}
Let $u$ satisfy a priori assumption \eqref{eq:assumption-apriori}. Then for any $\zeta>0$, we have 
\[ \| \mathcal{H}(|u|^2)\|_{L^\infty}\les_\zeta \bra{t}^{-1+\zeta}\ve_1^2. \]
\end{lemma}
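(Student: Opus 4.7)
The main obstacle is that the Hilbert transform $\cH$ is not bounded on $L^\infty$, so one cannot deduce $\|\cH(|u|^2)\|_{L^\infty}$ from the (essentially optimal) pointwise bound $\||u|^2\|_{L^\infty}\lesssim \bra{t}^{-1}\ve_1^2$. The idea is to pass through $L^p$ for a large but finite exponent $p=p(\zeta)$, where $\cH$ is bounded, and then recover $L^\infty$ via the one-dimensional Sobolev embedding $W^{1,p}\hookrightarrow L^\infty$, valid for every $p>1$. This procedure costs a factor $\bra{t}^{1/p}$, which is precisely the source of the arbitrarily small loss $\zeta>0$ in the statement.

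Concretely, for $t\ge 1$, I would fix any $p>1/\zeta$. Using that $\cH$ commutes with $\partial_x$ and is bounded on $L^p$, together with the 1D embedding $W^{1,p}\hookrightarrow L^\infty$,
\[
\|\cH(|u|^2)\|_{L^\infty}\lesssim \|\cH(|u|^2)\|_{W^{1,p}} \lesssim \||u|^2\|_{L^p} + \|\partial_x(|u|^2)\|_{L^p}.
\]
For the first term I would interpolate the pointwise decay \eqref{eq:linear} against the $L^2$ bound of Lemma~\ref{lem:L2energy}:
\[
\||u|^2\|_{L^p}=\|u\|_{L^{2p}}^{2}\leq \|u\|_{L^\infty}^{2-2/p}\|u\|_{L^2}^{2/p}\lesssim \bra{t}^{-1+1/p}\ve_1^{2-2/p}\ve^{2/p}\lesssim \bra{t}^{-1+1/p}\ve_1^2.
\]
For the second term I would invoke the null-form bilinear estimate \eqref{eq:esti-null} together with the 1D embedding $H^1\hookrightarrow L^p$ for $p\in[2,\infty]$:
\[
\|\partial_x(|u|^2)\|_{L^p}\lesssim \|\partial_x(|u|^2)\|_{H^1}\lesssim \bra{t}^{-3/2+2\delta}\ve_1^2.
\]
Combining these and recalling $1/p<\zeta$ (with $\delta$ chosen small enough that $2\delta<\zeta+1/2$) yields the desired bound on $t\geq 1$. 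For the complementary range $t\in[0,1]$ the target is of unit size and follows at once from $\|\cH(|u|^2)\|_{L^\infty}\lesssim \|(|u|^2)\|_{H^1}\lesssim \|u\|_{H^1}^2\lesssim \ve_1^2$, using the 1D embedding $H^1\hookrightarrow L^\infty$, the $H^1$-boundedness of $\cH$, and the a priori assumption \eqref{eq:assumption-apriori}.

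The structural point worth emphasizing is that the null-form estimate already provides an almost integrable-in-time control of $\partial_x(|u|^2)$ in $H^1$, so the derivative part of the $W^{1,p}$ norm is negligible; the rate in the final bound is entirely determined by the interpolated low-regularity term $\||u|^2\|_{L^p}$, which saturates at the limiting decay $\bra{t}^{-1}$ as $p\to\infty$ and is exactly what forces the $\zeta$-loss.
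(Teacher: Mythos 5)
Your proof is correct and follows the same underlying strategy as the paper's: pass through $L^p$ for large but finite $p$ (where $\mathcal{H}$ is bounded), paying the $\bra{t}^{1/p}$ price via the interpolation $\|u\|_{L^{2p}}\lesssim \|u\|_{L^\infty}^{1-1/p}\|u\|_{L^2}^{1/p}$, and rely on the null-form estimate \eqref{eq:esti-null} to make the derivative contribution negligible. The only genuine technical difference is the way $L^\infty$ is reached from $L^p$: you invoke the Sobolev embedding $W^{1,p}(\R)\hookrightarrow L^\infty(\R)$ together with the commutation $[\mathcal{H},\partial_x]=0$ and the $L^p$-boundedness of $\mathcal{H}$, whereas the paper raises $\mathcal{H}(|u|^2)$ to the $m$-th power (using that it is real-valued, so $\|\mathcal{H}(|u|^2)\|_{L^\infty}^m=\|\mathcal{H}(|u|^2)^m\|_{L^\infty}$), applies Agmon's inequality $\|g\|_{L^\infty}^2\lesssim\|g\|_{L^2}\|\partial_xg\|_{L^2}$ to $g=\mathcal{H}(|u|^2)^m$, and then uses H\"older to land in $L^{2m}$. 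Both routes are equivalent in substance and give the same exponent structure with the same choice of free parameter ($p$ or $m$) to absorb the $\zeta$-loss; your version is a bit more streamlined, while the paper's avoids naming the Sobolev embedding by building it in by hand through the power trick.
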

\begin{proof}
Since the case $t\in [0,1]$ can be done readily, we consider $t\geq 1$. For any positive integer $m$, we have
\begin{align*}
\| \mathcal{H}(|u|^2) \|_{L^\infty}^m=\| \mathcal{H}(|u|^2)^m\|_{L^\infty} &\les_m \| \mathcal{H}(|u|^2)^m\|_{L^2}^{\frac12}\| \mathcal{H}(|u|^2)^{m-1}\p_x\mathcal{H}(|u|^2)\|_{L^2}^{\frac12} \\
&\les \| \mathcal{H}(|u|^2)\|_{L^{2m}}^{m-\frac12}\| \p_x\mathcal{H}(|u|^2)\|_{L^{2m}}^{\frac12} \les_m \| |u|^2\|_{L^{2m}}^{m-\frac12}\| \p_x(|u|^2)\|_{L^{2m}}^{\frac12} \\
&\les \Big( \| u\|_{L^\infty}^{1-\frac1{2m}}\| u\|_{L^2}^{\frac1{2m}}\Big) ^{2m-1}\| \p_x(|u|^2)\|_{H^1}^{\frac12} \\
&\les (\bra{t}^{-\frac12(1-\frac1{2m})}\ve_1)^{2m-1}(\bra{t}^{-\frac32+2\delta}\ve_1^2)^{\frac12} \\
&= (\bra{t}^{-1+\frac{1+4\delta}{4m}-\frac{1}{4m^2}}\ve_1^2)^m.
\end{align*}
We finish the proof by taking $m$ sufficiently large.
\end{proof}%

\begin{lemma}\label{lem:rho}
Let $u$ satisfy a priori assumption \eqref{eq:assumption-apriori}. Then we have
\begin{align}\label{eq:esti-rho}
	|e^{\rho^\pm[u]}| \les 1.
\end{align}
Moreover, we get
\begin{align}\label{eq:esti-rho-deri}
\left| \p_x e^{\rho^\pm[u]} \right| \les \bra{t}^{-1+\zeta}\ve_1^2,\qquad \left| \p_x^2 e^{\rho^\pm[u]} \right| \les \bra{t}^{-\frac32+2\delta}\ve_1^2,\qquad \left\| \p_x^3 e^{\rho^\pm[u]} \right\|_{L^2} \les \bra{t}^{-\frac32+2\delta}\ve_1^2
\end{align}
for any $\zeta >0$ and
\begin{align}\label{eq:esti-rho-derit}
\left| \p_te^{2\rho_0^\pm[u]}\right| \les \left| \p_t\rho_0^\pm[u]\right| \les \bra{t}^{-1}\ve_1^2.
\end{align}%
\end{lemma}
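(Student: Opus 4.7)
The plan is to reduce each of the four estimates to pointwise, $L^2$, or $L^\infty$ bounds on $u$, $|u|^2$, and their derivatives, all of which follow from the a priori assumption \eqref{eq:assumption-apriori}, the dispersive bound \eqref{eq:linear}, Lemma~\ref{lem:HLinfty}, and the null-form estimate \eqref{eq:esti-null}. For \eqref{eq:esti-rho}, the key observation is that $|e^{\rho^\pm[u]}|=e^{\mathrm{Re}\,\rho^\pm[u]}$. Since $|u|^2$ is real-valued, both $\p_x^{-1}[|u|^2]$ and $\wt{\p}_x^{-1}[\cH(|u|^2)]$ are real, so only the $\mp\beta/2$ coefficient contributes to the real part, giving $\mathrm{Re}\,\rho^\pm[u] = \rho_0^\pm[u] = \mp(\beta/2)\p_x^{-1}[|u|^2]$. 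The function $\p_x^{-1}[|u|^2](x)$ is monotone with supremum $\|u\|_{L^2}^2$, and Lemma~\ref{lem:L2energy} yields $\|u(t)\|_{L^2}\sim\|\phi\|_{L^2}\leq \ve$, so $|e^{\rho^\pm}|\les 1$ follows immediately.

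For \eqref{eq:esti-rho-deri}, I would differentiate by the chain rule and exploit the closed forms $\p_x\rho^\pm=\palbe(|u|^2)$, $\p_x^2\rho^\pm=\palbe(\p_x|u|^2)$, and $\p_x^3\rho^\pm=\palbe(\p_x^2|u|^2)$ already recorded in Section~\ref{sec:gauge}. The first-order $L^\infty$ bound follows from $\||u|^2\|_{L^\infty}\les\bra{t}^{-1}\ve_1^2$ (via \eqref{eq:linear}) and $\|\cH(|u|^2)\|_{L^\infty}\les_\zeta \bra{t}^{-1+\zeta}\ve_1^2$ (Lemma~\ref{lem:HLinfty}). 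The second-order bound uses the null-form estimate $\|\p_x(|u|^2)\|_{H^1}\les \bra{t}^{-\frac32+2\de}\ve_1^2$ together with the Sobolev embedding $H^1\hookrightarrow L^\infty$ to control $\|\p_x^2\rho^\pm\|_{L^\infty}$. The third-order $L^2$ bound uses $\|\p_x^2(|u|^2)\|_{L^2}\le \|\p_x(|u|^2)\|_{H^1}$. The cross-terms from the chain rule, such as $(\p_x\rho^\pm)^2$, $(\p_x\rho^\pm)(\p_x^2\rho^\pm)$, and $(\p_x\rho^\pm)^3$, carry extra powers of $\ve_1$ with faster time decay and are easily absorbed; for example, $\|(\p_x\rho^\pm)^3\|_{L^2}\le \|\p_x\rho^\pm\|_{L^\infty}^2\|\p_x\rho^\pm\|_{L^2}$ with $\|\p_x\rho^\pm\|_{L^2}\les \||u|^2\|_{L^2}\les \bra{t}^{-\frac12}\ve_1^2$.

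For \eqref{eq:esti-rho-derit}, I start from $\p_t\rho_0^\pm = \mp(\beta/2)\p_x^{-1}[\p_t|u|^2]$ and substitute the identity
\[ \p_t|u|^2 = \p_x\Big[2\mathrm{Re}(i\ol{u}\p_xu)+\tfrac{3}{2}\al|u|^4+2\beta \cH(|u|^2)|u|^2\Big]-\beta\cH(|u|^2)\p_x(|u|^2)\]
derived in Section~\ref{sec:gauge}. Since $u(t)\in H^2$ vanishes at $-\infty$, applying $\p_x^{-1}$ to the perfect derivative recovers the bracketed expression pointwise, each term of which is $\les \bra{t}^{-1}\ve_1^2$ by \eqref{eq:linear} and Lemma~\ref{lem:HLinfty}. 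The residual $-\beta\p_x^{-1}[\cH(|u|^2)\p_x(|u|^2)]$ is handled by Cauchy-Schwarz via $\|\cH(|u|^2)\|_{L^2}\|\p_x(|u|^2)\|_{L^2}\les \bra{t}^{-2+2\de}\ve_1^4$, comfortably absorbable. The bound on $\p_te^{2\rho_0^\pm[u]}$ then follows from $\p_te^{2\rho_0^\pm}=2(\p_t\rho_0^\pm)e^{2\rho_0^\pm}$ together with $|e^{2\rho_0^\pm}|\les 1$, by the same argument as for \eqref{eq:esti-rho}.

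The one subtlety to watch is at the outset: the object $\wt{\p}_x^{-1}[\cH(|u|^2)]$ is not globally bounded in $L^\infty$, but it enters $\rho^\pm$ only with a purely imaginary coefficient and therefore cannot obstruct the $L^\infty$ estimate on $|e^{\rho^\pm}|=e^{\mathrm{Re}\,\rho^\pm}$. Once this real/imaginary splitting has been isolated, every remaining step is bookkeeping against the dispersive decay of $u$, Lemma~\ref{lem:HLinfty}, and the null-form identity \eqref{eq:esti-null}, with all higher-order chain-rule contributions absorbing comfortably into the leading term.
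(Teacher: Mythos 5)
Your proof is correct and follows essentially the same route as the paper: the real/imaginary splitting $\mathrm{Re}\,\rho^\pm=\rho_0^\pm$ to dispose of the unbounded imaginary part for \eqref{eq:esti-rho}, the chain rule with $\p_x\rho^\pm=\palbe(|u|^2)$ together with Lemma~\ref{lem:HLinfty} and the null-form estimate \eqref{eq:esti-null} for \eqref{eq:esti-rho-deri}, and the expansion of $\p_t(|u|^2)$ into a perfect derivative plus an $L^1$ residual for \eqref{eq:esti-rho-derit}. The only difference is that you spell out the chain-rule cross-terms $(\p_x\rho^\pm)^2$, $(\p_x\rho^\pm)(\p_x^2\rho^\pm)$, $(\p_x\rho^\pm)^3$ explicitly and verify they are absorbable, which the paper elides.
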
  
\begin{proof}
To obtain \eqref{eq:esti-rho}, it suffices to consider the real part $\rho_0^\pm[u]$. Indeed, we have
\begin{align*}
\big| \p_x^{-1}(|u|^2)(x)\big| \leq \|u\|_{L^2}^2 \les \ve_1^2.
\end{align*}
Then we finish the proof of \eqref{eq:esti-rho}.
The bounds \eqref{eq:esti-rho-deri} follows from \eqref{eq:esti-rho}, Lemma~\ref{lem:HLinfty}, and
\[
 \| \p_x P^\pm_{\alpha,\beta}(|u|^2)\|_{L^\infty}+\| \p_x^2 P^\pm_{\alpha,\beta}(|u|^2)\|_{L^2}\les \| \p_x(|u|^2)\|_{H^1}\les \bra{t}^{-\frac32+2\delta}\ve_1^2
 \]
by Lemma~\ref{lem:esti-null}.
Finally, we obtain the following estimates
\begin{align*}
\left| \p_t\rho_0^\pm[u]\right| 
&=\left| \frac{\beta}{2} \int_{-\infty}^x\Big\{ \p_x\Big( 2{\rm Re}(i\ol{u}\p_xu)+\frac32\alpha|u|^4+2\beta \mathcal{H}(|u|^2)|u|^2\Big) -\beta \mathcal{H}(|u|^2)\p_x(|u|^2)\Big\} (y)\,dy \right| \\
&\les \left\| 2{\rm Re}(i\ol{u}\p_xu)+\frac32\alpha|u|^4+2\beta \mathcal{H}(|u|^2)|u|^2\right\|_{L^\infty}+\left\| \mathcal{H}(|u|^2)\p_x(|u|^2)\right\|_{L^1}\\
&\les \bra{t}^{-1}\ve_1^2+\bra{t}^{-2}\ve_1^4+\bra{t}^{-2+\zeta}\ve_1^4+\bra{t}^{-2+2\delta}\ve_1^4 \les \bra{t}^{-1}\ve_1^2,
\end{align*}
which together with \eqref{eq:esti-rho} implies \eqref{eq:esti-rho-derit}.
\end{proof}

\begin{cor}\label{cor:apriori-v}
Suppose that $u$ satisfies the a priori assumption \eqref{eq:assumption-apriori}.
Then we have
 \begin{align}\label{eq:esti-v}
 	\| v_\pm (t)\|_{H^2}\les \bra{t}^{\delta}\ve_1,\qquad \| xe^{-it\p_x^2}v_\pm(t)\|_{H^1}\les \bra{t}^{2\de}\ve_1,
 \end{align}
for $t \in [0,T]$.
\end{cor}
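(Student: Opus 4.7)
The proof is a direct consequence of the factorization $v_\pm = e^{\rho^\pm[u]}Q_\pm u$ combined with the pointwise/Sobolev bounds on $e^{\rho^\pm[u]}$ from Lemma~\ref{lem:rho}. For the $H^2$ estimate, I would distribute derivatives via Leibniz:
\[
\p_x^2 v_\pm \;=\; \big(\p_x^2 e^{\rho^\pm}\big) Q_\pm u \;+\; 2\big(\p_x e^{\rho^\pm}\big)\p_x Q_\pm u \;+\; e^{\rho^\pm} \p_x^2 Q_\pm u.
\]
Applying the bounds $|e^{\rho^\pm}| \les 1$, $|\p_x e^{\rho^\pm}| \les \bra{t}^{-1+\zeta}\ve_1^2$, and $|\p_x^2 e^{\rho^\pm}| \les \bra{t}^{-\frac{3}{2}+2\de}\ve_1^2$ from Lemma~\ref{lem:rho}, together with $L^2$-boundedness of $Q_\pm$ and the a priori bound $\|u\|_{H^2}\le \bra{t}^{\de}\ve_1$, every piece is dominated by $\bra{t}^{\de}\ve_1$ (where we fix $\zeta<\de$).

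For the weighted norm, I would use the identity $x\, e^{-it\p_x^2} = e^{-it\p_x^2} J$ where $J := x + 2it\p_x$ (an integration by parts on the Fourier side, analogous to Lemma~\ref{lem:esti-null}). Since $e^{-it\p_x^2}$ is unitary on $H^1$, this reduces matters to bounding $\|J v_\pm\|_{H^1}$. Direct computation gives
\[
J v_\pm \;=\; e^{\rho^\pm}\, J Q_\pm u \;+\; 2it\,\big(\p_x e^{\rho^\pm}\big)\, Q_\pm u,
\]
and on $Q_\pm$ we have $J Q_\pm = Q_\pm J + [x, Q_\pm]$, where the commutator is a Fourier multiplier with Schwartz symbol $i(\chi_\pm \varrho_{>1})'(\xi)$ and hence bounded on $H^1$. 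For the $L^2$ part, $\|Q_\pm J u\|_{L^2}\le \|Ju\|_{L^2} = \|xf\|_{L^2}\le \bra{t}^{2\de}\ve_1$, $\|[x,Q_\pm]u\|_{L^2}\les \ve_1$, and the remaining piece is bounded by $t\cdot \bra{t}^{-1+\zeta}\ve_1^2\cdot \ve_1 \les \bra{t}^{2\de}\ve_1$ provided $\zeta<2\de$.

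For $\|\p_x J v_\pm\|_{L^2}$, applying $\p_x$ and using $[\p_x, J]=1$ (so that $\p_x Ju = J\p_x u + u$ and hence $\|\p_x Ju\|_{L^2}\les \|xf\|_{H^1}+\|u\|_{L^2}$), a further Leibniz expansion produces four terms involving $\p_x^j e^{\rho^\pm}$ with $j=0,1,2$. The potentially dangerous terms $2it(\p_x^2 e^{\rho^\pm})Q_\pm u$ and $2it(\p_x e^{\rho^\pm})\p_x Q_\pm u$ are controlled by $t\cdot \bra{t}^{-\frac32+2\de}\ve_1^2\cdot \ve_1 \les \bra{t}^{2\de}\ve_1$ and $t\cdot \bra{t}^{-1+\zeta}\ve_1^2\cdot \bra{t}^{\de}\ve_1 \les \bra{t}^{2\de}\ve_1$, respectively, again invoking Lemma~\ref{lem:rho}. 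The main subtlety throughout is precisely this balancing of the explicit factors of $t$ produced by $J = x + 2it\p_x$ against the time decay of the derivatives of the gauge factor $e^{\rho^\pm[u]}$; fixing the parameter $\zeta$ in Lemma~\ref{lem:rho} sufficiently small compared to the a priori parameter $\de$ closes the argument. The short-time regime $t\in[0,1]$ is handled directly by the embedding $H^2\hookrightarrow L^\infty$ and the a priori bound.
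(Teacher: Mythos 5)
Your proof is correct and takes essentially the same approach as the paper: both use the algebra bound with Lemma~\ref{lem:rho} for the $H^2$ estimate and the product rule for $J=x+2it\p_x$ (together with the commutator $[J,Q_\pm]=[x,Q_\pm]$) to reduce the weighted bound to $\|Jv_\pm\|_{H^1}$, balancing the explicit factor of $t$ against the decay of $\p_x e^{\rho^\pm}$ by taking $\zeta<\de$. The paper is just more compact, bounding $\|Jv_\pm\|_{H^1}$ in one display rather than splitting into $L^2$ and $\p_x$--$L^2$ pieces.
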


\begin{proof}
The first bound in \eqref{eq:esti-v} follows easily from Lemma~\ref{lem:rho} as follows:
\[ \| v_\pm \|_{H^2}\les \| e^{\rho^\pm[u]}\|_{W^{2,\infty}}\| Q_\pm u\|_{H^2}\les \| u\|_{H^2}\leq \bra{t}^{\de}\ve_1.\]
To prove the second estimate in \eqref{eq:esti-v}, we use the operator $J(t)=x+2it\p_x$ (see Section~\ref{sec:weighted} for the detailed properties). By
\[ Jv_\pm =2it\p_xe^{\rho^\pm[u]}\cdot Q_\pm u+e^{\rho^\pm [u]}[J,Q_\pm ]u+e^{\rho^\pm[u]}Q_\pm Ju ,\]
we have
\begin{align*}
\| xe^{-it\p_x^2}v_\pm\|_{H^1}&=\| Jv_\pm \|_{H^1}\les \bra{t}\| \p_xe^{\rho^\pm[u]}\|_{W^{1,\infty}}\| u\|_{H^1}+\| e^{\rho^\pm[u]}\|_{W^{1,\infty}}\big( \| u\|_{H^1}+\| Ju\|_{H^1}\big) \\
&\les \bra{t}^{\zeta+\de} \ve_1+\bra{t}^{2\de}\ve_1\les \bra{t}^{2\de}\ve_1,
\end{align*}
for   $0 < \zeta < \de$.
\end{proof}

We next give estimates for  $\mathcal{R}^\pm_5$ and $G_\pm[u]$, appearing in \eqref{eq:def-r5} and \eqref{eq:def-gpm}. To this end, we use the following commutator estimates.
\begin{lemma} \label{lem:commutator}
	Let $k\geq 1$ be an integer. Then we have
	\begin{align} \label{eq:comm-all}
		\| [Q_\pm ,f]g\|_{L^2}\les \min \left\{ \| f\|_{L^2}\big( \| g\|_{L^\infty}+\| Q_\pm g\|_{L^\infty}\big),~ \| \p_xf\|_{L^2}\| \wh{g}\|_{L^1_\xi} \right\}
	\end{align}
	and
	\begin{align} \label{eq:comm-high}
		\| \p_x^k[Q_\pm ,f]g\|_{L^2}\les \| \p_xf\|_{H^{k-1}}\| \wh{g}\|_{L^1_\xi}.
	\end{align}
\end{lemma}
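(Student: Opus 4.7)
The plan is to work on the Fourier side. Since $Q_\pm$ is the Fourier multiplier with symbol $m_\pm(\xi):=\chi_\pm(\xi)\varrho_{>1}(\xi)$, I would first write
\[
\widehat{[Q_\pm,f]g}(\xi)=\frac{1}{\sqrt{2\pi}}\int \bigl(m_\pm(\xi)-m_\pm(\eta)\bigr)\widehat{f}(\xi-\eta)\widehat{g}(\eta)\,d\eta.
\]
The key structural observation is that, although $\chi_\pm$ has a jump at $\xi=0$, the cutoff $\varrho_{>1}$ vanishes in a neighborhood of the origin, so the product $m_\pm$ is in fact $C^\infty$ on $\R$, with $m_\pm'$ bounded and supported in $|\xi|\sim 1$. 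In particular $m_\pm\equiv 1$ on $\{\pm \xi\geq 2\}$ and $m_\pm\equiv 0$ on the complement of $\{\pm \xi>1/2\}$, which yields
\[
|m_\pm(\xi)-m_\pm(\eta)|\les \min\bigl(1,|\xi-\eta|\bigr),
\]
and moreover this difference vanishes identically whenever $|\xi|,|\eta|\geq 2$ and $\xi,\eta$ share the same sign.

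With this in hand, the first bound in \eqref{eq:comm-all} (the one with $\|f\|_{L^2}$) follows at once from the splitting $[Q_\pm,f]g=Q_\pm(fg)-fQ_\pm g$ together with the $L^2$-boundedness of $Q_\pm$ and H\"older. For the second bound, I would use the Lipschitz property of $m_\pm$ to obtain the pointwise estimate
\[
\bigl|\widehat{[Q_\pm,f]g}(\xi)\bigr|\les \int |\xi-\eta|\,|\widehat{f}(\xi-\eta)|\,|\widehat{g}(\eta)|\,d\eta,
\]
and then invoke Young's inequality $L^2_\xi*L^1_\xi\hookrightarrow L^2_\xi$ to conclude $\|[Q_\pm,f]g\|_{L^2}\les \|\p_x f\|_{L^2}\|\widehat{g}\|_{L^1_\xi}$.

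For \eqref{eq:comm-high}, transferring $\p_x^k$ to the Fourier side introduces an additional factor $(i\xi)^k$, so the whole argument reduces to the pointwise bound
\[
|\xi|^k\bigl|m_\pm(\xi)-m_\pm(\eta)\bigr|\les |\xi-\eta|+|\xi-\eta|^k.
\]
Once this is established, Young's inequality again yields
\[
\|\p_x^k[Q_\pm,f]g\|_{L^2}\les \bigl(\|\p_x f\|_{L^2}+\|\p_x^k f\|_{L^2}\bigr)\|\widehat{g}\|_{L^1_\xi}\les \|\p_x f\|_{H^{k-1}}\|\widehat{g}\|_{L^1_\xi}.
\]
I plan to verify the pointwise inequality by a short case split on the sizes and signs of $\xi,\eta$: (i) if $|\xi|\leq 4$, the prefactor $|\xi|^k$ is harmless and the Lipschitz bound alone suffices; (ii) if $|\xi|>4$ and either $|\eta|\leq 2$ or $\xi,\eta$ have opposite signs, then $|\xi-\eta|\gtrsim |\xi|$, so $|\xi|^k\les |\xi-\eta|^k$ combines with $|m_\pm|\les 1$; (iii) if $|\xi|,|\eta|\geq 2$ share the same sign, then $m_\pm$ is locally constant and the difference vanishes outright.

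The conceptually hard step is the very first one, namely recognizing that $m_\pm$ is globally smooth despite the sharp symbol $\chi_\pm$, since it is this that makes the Lipschitz/Young strategy available in place of any genuine Calder\'on-type commutator argument. The case split above is then routine bookkeeping reflecting the fact that $[Q_\pm,f]g$ is effectively supported where one of the frequencies is of order one or where $\xi,\eta$ lie on opposite sides of the origin.
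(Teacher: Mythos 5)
Your proof is correct and follows essentially the same route as the paper: both extract the Lipschitz bound $|m_\pm(\xi)-m_\pm(\eta)|\les\min(1,|\xi-\eta|)$ from the smoothness of $m_\pm=\chi_\pm\varrho_{>1}$, apply Young's inequality, and for the higher derivatives exploit that $m_\pm$ is constant on $\{\pm\xi\geq 2\}$ to force $|\xi|\les|\xi-\eta|$ whenever the multiplier is nonvanishing at high $|\xi|$. The only difference is presentational: the paper localizes to $P_{\geq 8}$ before running the argument, whereas you package the case analysis into a single pointwise multiplier bound $|\xi|^k|m_\pm(\xi)-m_\pm(\eta)|\les|\xi-\eta|+|\xi-\eta|^k$.
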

\begin{proof}
	The first bound in \eqref{eq:comm-all} follows directly from the H\"older inequality. The mean value theorem implies
	\[ |\varrho_{>1}(\xi)\chi_\pm (\xi)-\varrho_{>1}(\eta)\chi_\pm (\eta)|\les |\xi-\eta| \]
	uniformly in $\xi,\eta\in \R$. Therefore,
	\[ \left\| \int_{\R}\big( \varrho_{>1}(\xi)\chi_\pm (\xi)-\varrho_{>1}(\eta)\chi_\pm (\eta)\big) \wh{f}(\xi-\eta)\wh{g}(\eta)\,d\eta \right\|_{L^2_\xi}\les \| \xi \wh{f}\|_{L^2_\xi}\| \wh{g}\|_{L^1_\xi},\]
	which implies the second bound in \eqref{eq:comm-all}. In view of this bound, \eqref{eq:comm-high} follows once we show
	\[ \| P_{\geq 8}\p_x^k[Q_\pm ,f]g\|_{L^2}\les \| \p_x^kf\|_{L^2}\| \wh{g}\|_{L^1_\xi}.\]
	To prove the estimate above, we assume $|\xi |\geq 4$. Since $\varrho_{>1} \chi_\pm$ is constant on each of the support $\pm [2,\infty)$, we have
	\[ \varrho_{>1}(\xi)\chi_\pm (\xi)-\varrho_{>1}(\eta)\chi_\pm (\eta)\neq 0\quad \Longrightarrow \quad |\xi-\eta|\geq |\xi|-2\geq \frac12|\xi|.\]
	In particular, $|\xi|\les |\xi-\eta|$ if $\varrho_{>1}(\xi)\chi_\pm (\xi)-\varrho_{>1}(\eta)\chi_\pm (\eta)\neq 0$ and $|\xi|\geq 4$. This implies
	\[ \| P_{\geq 8}\p_x^k[Q_{\pm},f] g\|_{L^2}\les \left\| \int_{\R}|\xi-\eta|^k|\wh{f}(\xi-\eta)||\wh{g}(\eta)|\,d\eta\right\|_{L^2_\xi}\les \| \xi^k\wh{f}\|_{L^2_\xi}\| \wh{g}\|_{L^1_\xi},\]
	showing \eqref{eq:comm-high}.
\end{proof}

\begin{lemma}\label{lem:R5}
	Suppose that $u$ satisfies the a priori assumption \eqref{eq:assumption-apriori}.
	Then we have
	\[ \| \mathcal{R}^\pm_5\|_{H^1}\les \bra{t}^{-1+\de}\ve_1^3.\]
\end{lemma}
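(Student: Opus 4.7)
The plan is to bound each of the four pieces of $\mathcal{R}_5^\pm$ displayed in \eqref{eq:def-r5} by direct trilinear H\"older estimates. Each piece is cubic in $u$ with one explicit derivative, and upon distributing $\p_x$ once more to control $\|\p_x\cdot\|_{L^2}$ in $H^1$, it becomes a cubic expression with at most two derivatives. The guiding rule is to place two factors in $L^\infty$, each contributing $\bra{t}^{-1/2}\ve_1$ via $\|u\|_{W^{1,\infty}},\|Q_\pm u\|_{W^{1,\infty}}\les \bra{t}^{-1/2}\ve_1$ from \eqref{eq:linear} and \eqref{eq:linear-ppm}, and the remaining factor (the one carrying the highest derivative) in $L^2$, where the a priori bound $\|u\|_{H^2}\les \bra{t}^{\delta}\ve_1$ applies. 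This yields the target $\bra{t}^{-1+\de}\ve_1^3$.

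For the local $\alpha$-group, the $H^1$-boundedness of $Q_\pm$ (which is $(I\pm i\mathcal{H})P_{>1}/2$) reduces matters to $\|u^2\p_x\bar u\|_{H^1}$ and $\|u\p_x\bar u\,Q_\pm u\|_{H^1}$, handled term-by-term by the product rule and the above H\"older scheme. For the non-local $\beta$-groups, any interior Hilbert transform such as $\mathcal{H}(u\p_x\bar u)$ or $\mathcal{H}(\bar u\p_x u)$ is kept in $L^2$ via the $L^2$-boundedness of $\mathcal{H}$, so that $\|\mathcal{H}(u\p_x\bar u)\|_{L^2}\les \|u\|_{L^\infty}\|\p_x u\|_{L^2}$; the identities $Q_\pm \mathcal{H}=\mp iQ_\pm$ and $\p_x\mathcal{H}=\mathcal{H}\p_x$ then turn $(Q_\pm\mathcal{H}u)(\p_x\bar u)u$ and $|u|^2 Q_\pm\mathcal{H}\p_xu$ into expressions free of $\mathcal{H}$, which again admit direct H\"older estimates.

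For the commutator piece $[Q_\pm, f]\p_x u$ with $f=2\alpha|u|^2+\beta\mathcal{H}(|u|^2)$, the first bound of Lemma~\ref{lem:commutator} gives $\|[Q_\pm, f]\p_x u\|_{L^2}\les \|f\|_{L^2}(\|\p_x u\|_{L^\infty}+\|Q_\pm\p_x u\|_{L^\infty})$, and combining $\|f\|_{L^2}\les \|u\|_{L^\infty}\|u\|_{L^2}\les \bra{t}^{-1/2}\ve_1^2$ with the $W^{1,\infty}$ decay yields $\bra{t}^{-1}\ve_1^3$. For its derivative I would split $\p_x[Q_\pm, f]\p_x u=[Q_\pm,\p_xf]\p_x u+[Q_\pm, f]\p_x^2 u$; the first is bounded via Lemma~\ref{lem:commutator} using $\|\p_x f\|_{L^2}\les \bra{t}^{-3/2+2\de}\ve_1^2$ from the null-form bound \eqref{eq:esti-null}, and the second by the trivial $\|[Q_\pm,f]\p_x^2 u\|_{L^2}\les \|f\|_{L^\infty}\|\p_x^2 u\|_{L^2}$.

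The main obstacle will be that $\mathcal{H}$ is not bounded on $L^\infty$, so $\|\mathcal{H}(|u|^2)\|_{L^\infty}$ cannot be controlled directly by $\||u|^2\|_{L^\infty}$. This arises precisely in the last step above, where I would invoke Lemma~\ref{lem:HLinfty} to get $\|\mathcal{H}(|u|^2)\|_{L^\infty}\les_\zeta \bra{t}^{-1+\zeta}\ve_1^2$; choosing $\zeta$ small enough that the resulting bound $\bra{t}^{-1+\zeta+\de}\ve_1^3$ can be absorbed into $\bra{t}^{-1+\de}\ve_1^3$ (after a harmless slight enlargement of $\de$) closes the estimate.
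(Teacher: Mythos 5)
Your overall plan coincides with the paper's: direct trilinear H\"older estimates with two factors in $L^\infty$ (via \eqref{eq:linear}, \eqref{eq:linear-ppm}) and the one carrying the top derivative in $L^2$, always letting $\cH$ land on the $L^2$ slot via its $L^2$-boundedness; this handles every piece of $\cR_5^\pm$ except the commutator, which then gets a special argument. The one place your route genuinely diverges — and leaks — is in $\|\p_x[Q_\pm ,\cH(|u|^2)]\p_x u\|_{L^2}$. Your Leibniz split $\p_x[Q_\pm ,f]\p_x u=[Q_\pm ,\p_x f]\p_x u+[Q_\pm ,f]\p_x^2 u$ treats the first piece fine, but the second forces you to put $f$ in $L^\infty$: you cannot put $\p_x^2u$ in $L^\infty$ (that would need $W^{2,\infty}$ decay, which is not available) nor bound $\|\wh{\p_x^2u}\|_{L^1_\xi}$ (which would require more than $H^2$ regularity). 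Hence $\|\cH(|u|^2)\|_{L^\infty}$ appears, Lemma~\ref{lem:HLinfty} enters, and you pay $\bra{t}^{\zeta}$. The paper avoids this entirely by using the tailored commutator bound \eqref{eq:comm-high} with $k=1$, which gives $\|\p_x[Q_\pm ,\cH(|u|^2)]\p_x u\|_{L^2}\les \|\p_x\cH(|u|^2)\|_{L^2}\|\wh{\p_xu}\|_{L^1_\xi}\les \|\p_x(|u|^2)\|_{L^2}\|\p_xu\|_{H^1}\les \bra{t}^{-\frac32+3\de}\ve_1^3$ in one stroke. The whole point of \eqref{eq:comm-high} is that the commutator's high-frequency output comes only from high frequencies of $f$, so one never pays regularity on $g$; your Leibniz split forfeits that structure.

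Your closing remark about "enlarging $\de$" is also not quite right: $\de$ is the fixed bootstrap parameter from \eqref{eq:assumption-apriori} and cannot be enlarged inside this lemma. What your argument actually yields is $\bra{t}^{-1+\zeta+\de}\ve_1^3$ for arbitrary $\zeta>0$, which is weaker than the stated conclusion. In practice that $\zeta$-loss would be harmless downstream (the paper tolerates such losses elsewhere, e.g.\ in Lemma~\ref{lem:JR5}), but as a proof of the lemma as written it has a gap. Replacing your Leibniz split by a direct application of \eqref{eq:comm-high} removes the $\zeta$ and gives exactly the stated bound.
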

\begin{proof}
	An appropriate use of H\"older inequality(eliminating $\cH$ in the $L^2$ norm) together with \eqref{eq:linear} and \eqref{eq:linear-ppm} gives the bound $\| u\|_{W^{1,\infty}}^2\| u\|_{H^2}\les \bra{t}^{-1+\de}\ve_1^3$. This completes the proof for the most cases. 	The only term which needs to care is $\| \p_x[Q_\pm ,\cH(|u|^2)]\p_xu\|_{L^2}$, and we exploit its commutator structure.
	Using Lemmas~\ref{lem:commutator} and \ref{lem:esti-null}, we estimate 
	\[ \| \p_x[Q_\pm ,\cH(|u|^2)]\p_xu\|_{L^2}\les \| \p_x\cH(|u|^2)\|_{L^2}\| \p_xu\|_{H^1}\les \bra{t}^{-\frac32+3\de}\ve_1^3,\]
which completes the proof.
\end{proof}

\begin{lemma}\label{lem:G}
Suppose that $u$ satisfies the a priori assumption \eqref{eq:assumption-apriori}.
Then we have
\begin{align}
\| G_\pm [u]\|_{L^\infty}&\lesssim \bra{t}^{-2+2\delta}\ve_1^4,\label{eq:esti-g}\\
\| \p_xG_\pm [u]\|_{H^1}&\lesssim \bra{t}^{-\frac52+\zeta+2\de}\ve_1^4,\label{eq:esti-dg}
\end{align}
for any $0<\zeta <\delta$ and $t \in [0,T]$.
\end{lemma}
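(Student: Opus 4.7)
The plan is to decompose $G_\pm[u]$ into the three pieces $A_\pm := P^\pm_{\alpha,\beta}\bigl[\tfrac{3}{2}\alpha|u|^4 + 2\beta\cH(|u|^2)|u|^2\bigr]$, $B_\pm := -i[P^\pm_{\alpha,\beta}(|u|^2)]^2$, and $C_\pm := \beta(i\alpha\pm\tfrac{\beta}{2})\p_x^{-1}[\cH(|u|^2)\p_x(|u|^2)]$ exactly as in \eqref{eq:def-gpm}, and estimate each piece separately. For \eqref{eq:esti-g}, the square $B_\pm$ reduces to $\|P^\pm_{\alpha,\beta}(|u|^2)\|_{L^\infty}^2$, which by \eqref{eq:linear} and Lemma~\ref{lem:HLinfty} will be bounded by $\bra{t}^{-2+2\zeta}\ve_1^4$. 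For $C_\pm$ I would use $\|\p_x^{-1}h\|_{L^\infty}\leq \|h\|_{L^1}$ together with Cauchy--Schwarz, $L^2$-boundedness of $\cH$, and Lemma~\ref{lem:esti-null} to obtain $\|C_\pm\|_{L^\infty}\les \|\cH(|u|^2)\|_{L^2}\|\p_x(|u|^2)\|_{L^2}\les \|u\|_{L^\infty}\|u\|_{L^2}\bra{t}^{-3/2+2\delta}\ve_1^2\les \bra{t}^{-2+2\delta}\ve_1^4$. The delicate piece is $A_\pm$, whose outer Hilbert transform hits the quartic expressions $|u|^4$ and $\cH(|u|^2)|u|^2$; since $\cH$ is not bounded on $L^\infty$, I would invoke the Gagliardo--Nirenberg bootstrap $\|h\|_{L^\infty}^{2m}\les m\|h\|_{L^{2m}}^{2m-1}\|\p_x h\|_{L^{2m}}$ from the proof of Lemma~\ref{lem:HLinfty}, combined with the $L^p$-boundedness of $\cH$ and the pointwise and null-form decays of $u$ and $\p_x(|u|^2)$, to obtain $\|\cH(\text{quartic})\|_{L^\infty}\les_\zeta \bra{t}^{-2+\zeta}\ve_1^4$ for arbitrary $\zeta>0$.

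For \eqref{eq:esti-dg}, differentiating $C_\pm$ cancels the anti-derivative, so $\p_x C_\pm = \beta(i\alpha\pm\tfrac{\beta}{2})\cH(|u|^2)\p_x(|u|^2)$ and its $L^2$ norm is bounded by $\|\cH(|u|^2)\|_{L^\infty}\|\p_x(|u|^2)\|_{L^2}\les \bra{t}^{-5/2+\zeta+2\delta}\ve_1^4$; the estimate for $\p_x^2 C_\pm$ follows similarly after the Leibniz rule, with the help of the one-dimensional Sobolev embedding $H^1\hookrightarrow L^\infty$ and the $H^1$-boundedness of $\cH$ to control $\|\cH(\p_x(|u|^2))\|_{L^\infty}\les \|\p_x(|u|^2)\|_{H^1}$. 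Differentiating $B_\pm$ yields $-2iP^\pm_{\alpha,\beta}(|u|^2)\p_x P^\pm_{\alpha,\beta}(|u|^2)$, and one more derivative distributes by Leibniz; every summand is then controlled by Lemma~\ref{lem:HLinfty} (in $L^\infty$), Lemma~\ref{lem:esti-null} (in $L^2$), and the Sobolev bound $\|\p_x P^\pm_{\alpha,\beta}(|u|^2)\|_{L^\infty}\les \|\p_x(|u|^2)\|_{H^1}$. For $A_\pm$, since $P^\pm_{\alpha,\beta}$ commutes with $\p_x$ and is bounded on $L^2$, the $H^1$ estimate reduces to $L^2$ control of $\p_x$ and $\p_x^2$ of the quartic expression, where each resulting Leibniz term will be handled by combining the pointwise decay \eqref{eq:linear}, Lemma~\ref{lem:esti-null}, and $L^p$-boundedness of $\cH$.

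The main obstacle will be the $L^\infty$ bound in \eqref{eq:esti-g} for outer Hilbert transforms of quartic expressions in $u$, which requires repeating the Gagliardo--Nirenberg interpolation from Lemma~\ref{lem:HLinfty} and accounts for the presence of $\zeta$ in the statement. Apart from this, the proof will amount to a careful bookkeeping of the decay budgets from \eqref{eq:linear}, Lemma~\ref{lem:esti-null}, and Lemma~\ref{lem:HLinfty}, together with the standard $L^p$ and $H^s$ mapping properties of $\cH$.
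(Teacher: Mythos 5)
Your proposal is correct, but it takes a slightly different route from the paper in two of the three pieces. The paper avoids a case-by-case decomposition by writing both of the first two constituents of $G_\pm[u]$ uniformly as expressions $\mathcal{A}_0\big[\mathcal{A}_1(|u|^2)\mathcal{A}_2(|u|^2)\big]$ with $\mathcal{A}_j\in\{\mathrm{Id},\cH,P^\pm_{\alpha,\beta}\}$, and proves two master estimates, $\|\mathcal{A}_1(|u|^2)\mathcal{A}_2(|u|^2)\|_{L^2}\les \bra{t}^{-3/2}\ve_1^4$ and $\|\mathcal{A}_1(|u|^2)\p_x\mathcal{A}_2(|u|^2)\|_{H^1}\les\bra{t}^{-5/2+\zeta+2\de}\ve_1^4$. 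The bound \eqref{eq:esti-dg} is then immediate from the second, and \eqref{eq:esti-g} for the first two pieces follows from the one-step Gagliardo--Nirenberg interpolation $\|\mathcal{A}_0[g]\|_{L^\infty}\les\|g\|_{L^2}^{1/2}\|\p_x g\|_{L^2}^{1/2}$, which is $\cH$-friendly and yields $\bra{t}^{-2+\zeta/2+\de}\ve_1^4$; the anti-derivative piece is handled exactly as you do, via $\|\p_x^{-1}h\|_{L^\infty}\le\|h\|_{L^1}$.

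Where you diverge: for $A_\pm$ you invoke the $m$-fold iterated bootstrap from Lemma~\ref{lem:HLinfty}. This does work and in fact gives the slightly better exponent $\bra{t}^{-2+\zeta}$ for arbitrary $\zeta>0$, but it is more machinery than needed. The reason Lemma~\ref{lem:HLinfty} required iteration is that for $|u|^2$ one only has $\||u|^2\|_{L^2}\les\bra{t}^{-1/2}\ve_1^2$, and the one-step interpolation would cost an unwanted $\bra{t}^{\delta}$; for the quartic expressions the extra factor of $|u|^2$ supplies an additional $\bra{t}^{-1}$ in both $\|g\|_{L^2}$ and $\|\p_x g\|_{L^2}$, so the single-step interpolation already lands within budget. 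For $B_\pm$ you factor pointwise, $\|B_\pm\|_{L^\infty}=\|P^\pm_{\alpha,\beta}(|u|^2)\|_{L^\infty}^2$, which is actually cleaner than funnelling this term through the interpolation scheme; the paper instead treats it under the same $\mathcal{A}_j$ umbrella with $\mathcal{A}_0={\rm Id}$. Both of your choices are sound, and the bookkeeping you describe for \eqref{eq:esti-dg} matches the paper's master estimate term-by-term; the only thing worth internalizing is that the one-step Gagliardo--Nirenberg is what the quartic structure buys you, making the $m$-fold iteration unnecessary here.
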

\begin{proof}
Let $\mathcal{A}_0,\mathcal{A}_1,\mathcal{A}_2 \in \big\{{\rm Id},\mathcal{H},P^\pm_{\alpha,\beta}\big\}$.
By \eqref{eq:linear} and Lemmas~\ref{lem:esti-null}, \ref{lem:HLinfty}, we have
\begin{align*}
\| \mathcal{A}_1(|u|^2)\mathcal{A}_2(|u|^2)\|_{L^2}&\les \| |u|^2\|_{L^4}^2\leq \| u\|_{L^\infty}^3\| u\|_{L^2}\les \bra{t}^{-\frac32}\ve_1^4,\\
\| \mathcal{A}_1(|u|^2)\p_x\mathcal{A}_2(|u|^2)\|_{H^1}&\les \| \mathcal{A}_1(|u|^2)\|_{W^{1,\infty}}\| \p_x(|u|^2)\|_{H^1}\les \bra{t}^{-\frac52+\zeta+2\delta}\ve_1^4.
\end{align*}
The bound \eqref{eq:esti-dg} follows directly from the second estimate.
To prove \eqref{eq:esti-g}, we use these estimates as
\[ \big\| \mathcal{A}_0\big[ \mathcal{A}_1(|u|^2)\mathcal{A}_2(|u|^2)\big] \big\|_{L^\infty}\les \| \mathcal{A}_1(|u|^2)\mathcal{A}_2(|u|^2)\|_{L^2}^{\frac12}\big\| \p_x\big[ \mathcal{A}_1(|u|^2)\mathcal{A}_2(|u|^2)\big]\big\|_{L^2}^{\frac12}\les \bra{t}^{-2+\frac{\zeta}{2}+\delta}\ve_1^4,\]
together with 
\[ \Big\| \int_{-\infty}^x\big[ \mathcal{H}(|u|^2)\p_x(|u|^2)\big] (y)\,dy \Big\|_{L^\infty} \leq \| \mathcal{H}(|u|^2)\p_x(|u|^2)\|_{L^1}\les \| |u|^2\|_{L^2}\| \p_x(|u|^2)\|_{L^2}\les \bra{t}^{-2+2\de}\ve_1^4. \]
Since $\zeta <\delta$, this implies \eqref{eq:esti-g}.
\end{proof}

Finally, we give estimates related to the time derivative of $f(t)$, which will be needed when we exploit the time non-resonance property by a normal form approach.
\begin{lemma}\label{lem:derivativ-f}
	Let $u\in C([0,T];H^2\cap H^{1,1})$ be a solution of \eqref{kdnls} satisfying a priori assumption \eqref{eq:assumption-apriori} and $f(t)= e^{-it\p_x^2}u(t)$. Then we have, for any $\zeta>0$,	
	\begin{align}
		\|\p_t f(t)\|_{H^1} &\les \bra{t}^{-1+\zeta+\de} \ve_1^3,\label{eq:derivative-f}\\
		\| \p_txf(t)\|_{L^2}&\les \bra{t}^{-1+\zeta+2\de}\ve_1^3. \label{eq:derivative-xf}
	\end{align}
\end{lemma}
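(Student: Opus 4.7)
The plan is to use $\p_tf=e^{-it\p_x^2}\mathcal{N}[u]$ with $\mathcal{N}[u]:=\alpha\p_x(|u|^2u)+\beta\p_x(\mathcal{H}(|u|^2)u)$, obtained by differentiating $f=e^{-it\p_x^2}u$ and substituting \eqref{kdnls}. Since $e^{-it\p_x^2}$ is an isometry on every $H^s$, the first bound \eqref{eq:derivative-f} reduces to $\|\mathcal{N}[u]\|_{H^1}\les\bra{t}^{-1+\zeta+\de}\ve_1^3$. Expanding the outer $\p_x$ by the Leibniz rule, I would control the pieces containing a factor of $\p_x|u|^2$ (or $\p_x\mathcal{H}(|u|^2)=D_x(|u|^2)$) by the null-form decay of Lemma~\ref{lem:esti-null}, and the remaining pieces $|u|^2\p_xu$, $|u|^2\p_x^2u$, $\mathcal{H}(|u|^2)\p_xu$, $\mathcal{H}(|u|^2)\p_x^2u$ by pairing the pointwise decays \eqref{eq:linear} and Lemma~\ref{lem:HLinfty} with the a priori $H^2$ bound. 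Each summand is at most $\bra{t}^{-1+\zeta+\de}\ve_1^3$.

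For \eqref{eq:derivative-xf} the starting point is $\p_t(xf)=x\p_tf=e^{-it\p_x^2}J\mathcal{N}[u]$, obtained from $xe^{-it\p_x^2}=e^{-it\p_x^2}J$ with the Galilean operator $J(t):=x+2it\p_x$. This reduces the claim to $\|J\mathcal{N}[u]\|_{L^2}\les\bra{t}^{-1+\zeta+2\de}\ve_1^3$. I would use the commutator $J\p_x=\p_xJ-1$ to pass $J$ through the outer derivative and the product rule $J(fg)=(Jf)g+2itf\p_xg$ to distribute $J$ onto factors of $u$. The cubic structure then gives the clean cancellation
\[
J(|u|^2u)=2|u|^2Ju-u^2\overline{Ju},
\]
obtained after inserting $2it\p_x(|u|^2)=\bar uJu-u\overline{Ju}$ from Lemma~\ref{lem:esti-null}, while the reality of $\mathcal{H}(|u|^2)$ yields
\[
J(\mathcal{H}(|u|^2)u)=2itD_x(|u|^2)u+\mathcal{H}(|u|^2)Ju.
\]

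Every resulting factor is then controlled via the a priori bounds $\|Ju\|_{L^2}=\|xf\|_{L^2}\les\bra{t}^{2\de}\ve_1$ and $\|\p_xJu\|_{L^2}\leq\|Ju\|_{H^1}=\|xf\|_{H^1}\les\bra{t}^{2\de}\ve_1$, together with the pointwise decay \eqref{eq:linear}, the null-form estimate of Lemma~\ref{lem:esti-null}, and Lemma~\ref{lem:HLinfty}. The main delicate point arises when one differentiates $J(\mathcal{H}(|u|^2)u)$: the piece $2itD_x(|u|^2)\p_xu$ has an explicit factor $t$, and placing $\p_xu$ in $L^2$ would cost $\bra{t}^\de$ from the $H^2$ bootstrap and thereby ruin the decay. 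Instead, I will put $\p_xu$ in $L^\infty$ via \eqref{eq:linear} and pair it with $\|D_x(|u|^2)\|_{L^2}\les\bra{t}^{-3/2+2\de}\ve_1^2$ from Lemma~\ref{lem:esti-null}. The $\bra{t}^{\zeta}$ loss in both bounds traces back solely to Lemma~\ref{lem:HLinfty}, which is why $\zeta$ can be taken arbitrarily small.
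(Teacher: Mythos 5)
Your proof is correct and follows essentially the same strategy as the paper: for \eqref{eq:derivative-f} the paper also applies Leibniz to $\p_x(|u|^2u)$ and $\p_x(\cH(|u|^2)u)$ and pairs Lemma~\ref{lem:esti-null} with Lemma~\ref{lem:HLinfty} and the $H^2$ bound; for \eqref{eq:derivative-xf} the paper works on the Fourier side via $\wh{\p_txf}=i\p_\xi\wh{\p_tf}$, which is exactly your $J$-computation $\p_t(xf)=e^{-it\p_x^2}J\cN[u]$ read through Plancherel (your $[J,\p_x]=-1$ corresponds to the term where $\p_\xi$ hits the symbol $i\xi$, and your distribution of $J$ to the term where $\p_\xi$ hits the $\wh f$-factors), and both proofs place $\cH(|u|^2)$ in $L^\infty$ against $\p_xJu$ in $L^2$ as the worst case, which is the source of the $\bra{t}^{\zeta}$ loss. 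One small aside: the identity $J(\cH(|u|^2)u)=\cH(|u|^2)Ju+2itD_x(|u|^2)u$ is just the product rule $J(fg)=f(Jg)+2it(\p_xf)g$ and does not rely on reality of $\cH(|u|^2)$, although your subsequent bound $\|D_x(|u|^2)\|_{L^2}\les\bra{t}^{-3/2+2\de}\ve_1^2$ implicitly invokes the null-form identity $2it\p_x(|u|^2)=\bar uJu-u\ol{Ju}$, which is exactly where the explicit factor of $t$ is cancelled.
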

\begin{proof}
By direct calculation, we have
\begin{align*}
	\p_t f = \p_t( e^{-it\p_x^2}u) = e^{-it\p_x^2} ( \p_t -i \p_x^2) u = e^{-it\p_x^2}( \al \p_x [|u|^2u] + \beta \p_x [\cH(|u|^2)u]).
\end{align*}	
By Lemma~\ref{lem:esti-null} and Lemma~\ref{lem:HLinfty}, we have
\begin{align*}
	\normo{\p_t f(t)}_{H^1} &\les \| \p_x(|u(t)|^2u(t))\|_{H^1}+\| \p_x\big[ \cH(|u(t)|^2)u(t)\big] \|_{H^1}\\
&\les \| \partial_x(|u(t)|^2)\|_{H^1}\| u(t)\|_{W^{1,\infty}}+\left( \| |u(t)|^2\|_{L^\infty}+\|\cH(|u(t)|^2)\|_{L^\infty}\right) \| \p_xu(t)\|_{H^1} \\
&\les \bra{t}^{-2+2\delta}\ve_1^3+\bra{t}^{-1+\zeta+\delta}\ve_1^3\les \bra{t}^{-1+\zeta+\delta}\ve_1^3.
\end{align*}
Similarly, using
\begin{align*}
\wh{\p_t xf}(t,\xi) &=i\p_\xi \left\{ \frac{1}{2\pi}\iint_{\R^2} e^{2it\eta\sigma} i\xi \left( \al -i\beta \sgn{\eta}\right) \wh{f}(t,\xi-\eta)\ol{\wh{f}(t,\xi-\eta-\sigma)}\wh{f}(t,\xi-\sigma) d\eta d\sigma \right\}\\
	&=-\cF \left[ e^{-it\p_x^2}( \al |u|^2u + \beta \cH(|u|^2)u) \right] (t,\xi )\\
&\quad + \frac{i}{2\pi} \iint_{\R^2} e^{2it\eta\sigma} i\xi \left( \al -i\beta \sgn{\eta}\right) \p_\xi \left(\wh{f}(t,\xi-\eta)\ol{\wh{f}(t,\xi-\eta-\sigma)}\wh{f}(t,\xi-\sigma)  \right) d\eta d\sigma, 
\end{align*}
we see that
\begin{align*}
\|\p_t xf(t) \|_{L^2}&\les \| u(t)\|_{L^\infty}^2\| u(t)\|_{L^2}+\| u(t)\|_{W^{1,\infty}}^2\| xf(t)\|_{H^1}+\| \cH(|u(t)|^2)\|_{L^\infty}\| \p_x(xf(t))\|_{L^2} \\
&\les \bra{t}^{-1+\zeta+2\delta} \ve_1^3.
\end{align*}
This completes the proof.
\end{proof}


\section{Energy estimates}\label{sec:energy}

This section is devoted to the proof of the following proposition.
\begin{prop}\label{prop:energy}
	Let $T>0$ and $0<\ve\leq \ve_1\leq 1$. 
	Suppose that $u\in C([0,T], H^2 \cap H^{1,1})$ is a solution to \eqref{kdnls} with initial data $\phi$ satisfying \eqref{thm:initial} and that $u$ satisfies a priori assumption \eqref{eq:assumption-apriori}. 
	Then, we have
	\begin{align}
		\|u(t)\|_{H^2} \les \ve + \bra{t}^\de\ve_1^2\label{eq:high-u},
	\end{align}
for $t \in [0,T]$.
\end{prop}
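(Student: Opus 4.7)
The plan is to perform an $H^2$ energy estimate on the gauge-transformed variables $v_\pm = e^{\rho^\pm[u]}Q_\pm u$ introduced in Definition~\ref{def:gauge}, and then recover the bound on $u$ by inverting the gauge. Decomposing $u = P_{\leq 1}u + e^{-\rho^+[u]}v_+ + e^{-\rho^-[u]}v_-$ and using Lemma~\ref{lem:rho} to control $e^{-\rho^\pm[u]}$ in $W^{2,\infty}$, we obtain $\|u\|_{H^2}\les \|u\|_{L^2}+\sum_\pm \|v_\pm\|_{H^2}$. Since Lemma~\ref{lem:L2energy} already gives $\|u(t)\|_{L^2}\sim \|\phi\|_{L^2}\les \ve$, the task reduces to establishing $\|v_\pm(t)\|_{H^2}\les \ve + \bra{t}^\de\ve_1^2$.

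Writing $\tfrac12\tfrac{d}{dt}\|v_\pm\|_{H^2}^2 = \mathrm{Re}\,\langle v_\pm,(\p_t-i\p_x^2)v_\pm\rangle_{H^2}$, the skew-adjoint Schr\"odinger part contributes nothing, and the right-hand side splits into three groups from \eqref{eq:v-gauge-transform}--\eqref{eq:v-energy2}. For $e^{\rho^\pm[u]}\mathcal{R}_5^\pm$, Lemma~\ref{lem:R5} provides $\|\mathcal{R}_5^\pm\|_{H^1}\les \bra{t}^{-1+\de}\ve_1^3$; at the top order in the $H^2$ pairing, one derivative is transferred off $\mathcal{R}_5^\pm$ by integration by parts, invoking the null-form/commutator structure of the individual summands in \eqref{eq:def-r5}. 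For $G_\pm[u]v_\pm$, the principal part is bounded by $\|G_\pm[u]\|_{L^\infty}\|v_\pm\|_{H^2}^2\les \bra{t}^{-2+2\de}\ve_1^4 \|v_\pm\|_{H^2}^2$ (Lemma~\ref{lem:G}), which is time-integrable, while the commutator pieces use \eqref{eq:esti-dg}. For the two terms in \eqref{eq:v-energy1}--\eqref{eq:v-energy2}, the critical observation is that both coefficients are of the form $i\cdot(\text{real-valued})$: when every derivative falls on $v_\pm$ the integrand becomes $i$ times a real function times $|\p_x^k v_\pm|^2$, whose real part vanishes; the remaining commutator contributions (derivatives landing on the coefficient) are handled via $\p_x\wt{\p}_x^{-1}=\mathrm{Id}$, which for \eqref{eq:v-energy1} produces $\mathcal{H}[\mathcal{H}(|u|^2)\p_x(|u|^2)]$ controlled in $L^2$ by combining Lemma~\ref{lem:HLinfty} (for $\mathcal{H}(|u|^2)\in L^\infty$) with Lemma~\ref{lem:esti-null} (for $\p_x(|u|^2)\in H^1$), while \eqref{eq:v-energy2} is constant in $x$ so that any spatial derivative kills it. Assembling these bounds gives, schematically, $\tfrac{d}{dt}\|v_\pm\|_{H^2}^2\les \bra{t}^{-1+\de}\ve_1^3\|v_\pm\|_{H^2}+\bra{t}^{-2+2\de}\ve_1^4\|v_\pm\|_{H^2}^2+\text{(time-integrable)}$, which integrated from $0$ with $\|v_\pm(0)\|_{H^2}\les \ve$ yields $\|v_\pm(t)\|_{H^2}\les \ve+\bra{t}^\de\ve_1^2$ via a standard Young/Gronwall argument.

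The main obstacle is the top-order treatment of $\mathcal{R}_5^\pm$: Lemma~\ref{lem:R5} only furnishes an $H^1$ bound, whereas a naive $H^2$ energy inner product would ask for an $H^2$ bound on the forcing. This is precisely the purpose of the gauge transformation: the worst high-low, derivative-losing interaction $P^\pm_{\alpha,\beta}(|u|^2)\p_x Q_\pm u$ has been canceled, leaving in $\mathcal{R}_5^\pm$ only null-form and commutator expressions, for which one derivative can be shifted onto $v_\pm$ in the top-order $H^2$ pairing without producing $\p_x^3 u$. Verifying this cancellation term-by-term across the four lines of \eqref{eq:def-r5} -- and tracking the $W^{2,\infty}$ factor $e^{\rho^\pm[u]}$ using \eqref{eq:esti-rho} and \eqref{eq:esti-rho-deri} -- is the technical heart of the proof.
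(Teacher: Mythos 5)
Your decomposition, your use of Lemma~\ref{lem:L2energy} to control $\|u(t)\|_{L^2}$, and your treatment of the coefficients in \eqref{eq:v-energy1}--\eqref{eq:v-energy2} (observing they are $i$ times a real function, so the top-order contribution has vanishing real part) are all in the right spirit. However, there is a genuine gap precisely at the point you flag as ``the main obstacle''—and the vague resolution you offer there does not work.

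Running $\tfrac12\tfrac{d}{dt}\|v_\pm\|_{H^2}^2 = \mathrm{Re}\,\langle v_\pm,\p_tv_\pm\rangle_{H^2}$ at top order produces $\mathrm{Re}\,\langle \p_x^2 v_\pm, \p_x^2[e^{\rho^\pm[u]}\mathcal{R}_5^\pm]\rangle_{L^2}$. The forcing $\mathcal{R}_5^\pm$ is cubic in $u$ with one derivative, so $\p_x^2\mathcal{R}_5^\pm$ contains terms of the form $u^2\p_x^3\ol{u}$, $u\,\cH(\ol{u}\,\p_x^3 u)$, etc.; these are \emph{not} in $L^2$ under the a priori assumption. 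You propose to fix this by ``transferring one derivative off $\mathcal{R}_5^\pm$ by integration by parts, invoking the null-form/commutator structure,'' but spatial integration by parts only moves the derivative onto $\p_x^3 v_\pm$, which is equally uncontrolled. The gauge transformation cancels the worst high-low interaction $\palbe(|u|^2)\,\p_xQ_\pm u$, but it does \emph{not} reduce $\p_x^2\mathcal{R}_5^\pm$ to something in $L^2$. The actual resolution in the paper (Lemma~\ref{lem:high-energy}) is entirely different: the $u^2\p_x^3\ol{u}$-type term is treated by a normal-form (integration by parts in \emph{time}, using the non-resonance of the phase $2t\eta\sigma$ and the $\eta\sigma$ factor extracted from the symbol), while the $\cH(\ol{u}\p_x^3u)u$-type term is treated by a frequency-localized bilinear estimate exploiting the exact cancellation in the multiplier $m_2(\xi,\eta,\sigma)$ at high-low frequency (Case C vanishes). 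None of this is recoverable from the claim that ``one derivative can be shifted''.

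There is also a structural difference worth noting. The paper does not run an $H^2$ energy estimate on $v_\pm$ at all: it works with the scalar quantity $\|e^{\rho_0^\pm[u]}Q_\pm\p_x^2u\|_{L^2}$ using the \emph{simpler, real-valued} gauge $\rho_0^\pm$ of \eqref{def:gauge0} (not the full $\rho^\pm$), computes its time derivative directly from \eqref{kdnls}, and organizes the resulting cubic terms as $\mathcal{S}_1^\pm,\dots,\mathcal{S}_5^\pm$. The weight's $x$-derivative, $\p_xe^{2\rho_0^\pm}=\mp\beta|u|^2e^{2\rho_0^\pm}$, produces a term $\pm i\beta|u|^2Q_\pm\p_x^3u$ after integration by parts which is then grouped with the other $\p_x^3$ contributions, yielding the cancellation used in $\mathcal{S}_1^\pm$ and $\mathcal{S}_4^\pm$. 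Your proposal neither identifies this cancellation mechanism nor the normal-form step, so as written it does not close.
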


\subsection{Setup}

By a standard approximation argument with the local well-posedness result in \cite{KL-LWP}, we may assume $u\in C([0,T],H^k\cap H^{k-1,1})$ for sufficiently large $k$.
In particular, all the following calculations are justified in the appropriate spaces (see Remark~\ref{rem:justify-energy} below).

 From the uniform $L^2$ bound in Lemma~\ref{lem:L2energy}, we have $\| u(t)\|_{L^2}\les \ve$. 
To prove \eqref{eq:high-u}, it suffices to show
\begin{align}
	\|\p_x^2 u(t)\|_{L^2} &\les  \ve + \bra{t}^\de\ve_1^2 \label{eq:high-u-d3}.
\end{align}


As we mentioned in the Introduction, in order to show \eqref{eq:high-u-d3}, we need the gauge transformations to handle the derivative in nonlinearities. 
Here, the simpler gauge \eqref{def:gauge0} is sufficient.
Using the decomposition 
\[ \p_x^2u=P_{\leq 1}\p_x^2u+e^{\rho_0^-[u]}\big( e^{\rho_0^+[u]}Q_+\p_x^2u\big) +e^{\rho_0^+[u]}\big( e^{\rho_0^-[u]}Q_-\p_x^2u\big) ,\]
the estimate
\[ \| P_{\leq 1}\partial_x^2u(t)\|_{L^2} \lesssim \| u(t)\|_{L^2}\lesssim \ve, \] 
and Lemma~\ref{lem:rho}, we have
\[ \| \partial_x^2u(t)\|_{L^2}\lesssim \ve +\sum_\pm \| e^{\rho_0^\pm [u]}Q_\pm \p_x^2u(t)\|_{L^2}.\]
Hence, we focus on proving 
	\begin{align}
		\| e^{\rho_0^\pm [u]}Q_\pm \p_x^2u(t)\|_{L^2} \les \ve + \bra{t}^\de\ve_1^2\label{eq:high-v},
	\end{align}
for $t \in [0,T]$. 
%
%
A direct calculation with integration by parts yields that
\begin{align}
&\frac{d}{dt}\| e^{\rho_0^\pm [u]}Q_\pm \p_x^2u(t)\|_{L^2}^2 \notag \\
&=\int_{\R}\big( \p_t e^{2\rho_0^\pm[u]}\big) |Q_\pm \p_x^2u|^2\,dx \notag \\
&\quad + 2{\rm Re} \int_{\R}e^{2\rho_0^\pm[u]}Q_\pm \p_x^2 \Big[ i\p_x^2u+\alpha \p_x\big[ |u|^2u\big] +\beta \p_x\big[ \mathcal{H}(|u|^2)u\big] \Big] \ol{Q_\pm \p_x^2u}\,dx \notag \\
&\begin{aligned}
&=\int_{\R}(2\p_t\rho_0^\pm[u])\big| e^{\rho_0^\pm[u]}Q_\pm\p_x^2u\big| ^2\,dx \\
&\quad +2{\rm Re} \int_{\R}e^{2\rho_0^\pm[u]}Q_\pm \Big[ \alpha \p_x^3\big[ |u|^2u\big] +\beta \p_x^3\big[ \mathcal{H}(|u|^2)u\big]  \pm i\beta |u|^2Q_\pm \p_x^3u \Big] \ol{Q_\pm \p_x^2u}\,dx .
\end{aligned} \label{eq:energy-v-time}
\end{align}
\begin{rem}\label{rem:justify-energy}
If $u\in C([0,T],H^k)$ for sufficiently large $k$, then $\p_tu\in C([0,T];H^{k-2})$ from the equation \eqref{kdnls}, and one can show that $e^{\rho_0^\pm [u]}Q_\pm \p_x^2u\in C^1([0,T],L^2)$, which justifies $t$-differentiation of its $L^2$ norm in the above formal calculation.
Moreover, functions in $x$-integrals are all smooth and vanishing at infinity, so that integration by parts operations can also be justified.
\end{rem}
By Lemma~\ref{lem:rho}, the first term of \eqref{eq:energy-v-time} is estimated as 
\begin{gather}\label{eq:hi-0a}
\Big| \int_{\R}(2\p_t\rho_0^\pm[u])\big| e^{\rho_0^\pm[u]}Q_\pm\p_x^2u(t)\big| ^2\,dx \Big| 
\les \bra{t}^{-1}\ve_1^2\| e^{\rho_0^\pm[u]}Q_\pm\p_x^2u(t)\|_{L^2}^2 
\les \bra{t}^{-1+2\delta}\ve_1^4.
\end{gather}
The second term of \eqref{eq:energy-v-time} is decomposed into
\[ \sum_{j=1}^5 2{\rm Re} \int_{\R}e^{2\rho_0^\pm[u]}\mathcal{S}^\pm_j \,\ol{Q_\pm \p_x^2u}\,dx,\]
where
\begin{align*}
\mathcal{S}_1^\pm(t)&:=\big( 2\alpha |u|^2+\beta \mathcal{H}(|u|^2)\big) Q_\pm \p_x^3u,\\
\mathcal{S}_2^\pm(t)&:=\big[ Q_\pm ,2\alpha |u|^2+\beta \mathcal{H}(|u|^2)\big] \p_x^3u,\\
\mathcal{S}_3^\pm(t)&:=Q_\pm \big( \alpha u^2\p_x^3\ol{u} +\beta \mathcal{H}(u\p_x^3\ol{u})u \big) ,\\
\mathcal{S}_4^\pm(t)&:=\beta \Big( Q_\pm \big( \mathcal{H}(\ol{u}\p_x^3u)u\big) \pm i|u|^2Q_\pm \p_x^3u\Big) ,\\
\mathcal{S}_5^\pm(t)&:=Q_\pm \Big[ \alpha \big\{ 3\p_x(|u|^2)\p_x^2u+3\p_x^2(|u|^2)\p_xu+3(\p_x^2u\p_x\ol{u}+\p_xu\p_x^2\ol{u})u\big\} \\
&\qquad\qquad +\beta \big\{ 3\p_x\mathcal{H}(|u|^2)\p_x^2u+3\p_x^2\mathcal{H}(|u|^2)\p_xu+3\mathcal{H}(\p_x^2u\p_x\ol{u}+\p_xu\p_x^2\ol{u})u\big\} \Big] .
\end{align*}

Let us consider each term. 
Regarding $\cS_1^\pm$, by integration by parts, we see that
\begin{align*}
&2{\rm Re} \int_{\R}e^{2\rho_0^\pm[u]}\mathcal{S}^\pm_1\, \ol{Q_\pm \p_x^2u}\,dx \\
&=-\int_{\R}\p_x \Big[ e^{2\rho_0^\pm[u]}\big( 2\alpha |u|^2+\beta \mathcal{H}(|u|^2)\big) \Big] |Q_\pm\p_x^2u|^2\,dx\\
&=-\int_{\R}e^{2\rho_0^\pm[u]}\Big[ {\mp} \beta |u|^2\big( 2\alpha |u|^2+\beta \mathcal{H}(|u|^2)\big) + \p_x\big( 2\alpha |u|^2+\beta \mathcal{H}(|u|^2)\big) \Big] |Q_\pm\p_x^2u|^2\,dx.
\end{align*}
By \eqref{eq:linear}, Lemmas~\ref{lem:esti-null}, \ref{lem:HLinfty} and \ref{lem:rho}, we have
\begin{align*}
\Big| 2{\rm Re} \int_{\R}e^{2\rho_0^\pm[u]}\mathcal{S}^\pm_1(t)\, \ol{Q_\pm \p_x^2u(t)}\,dx \Big| 
&\les \Big( \bra{t}^{-2+\zeta}\ve_1^4+\bra{t}^{-\frac32+2\delta}\ve_1^2\Big) \| Q_\pm \p_x^2u(t)\|_{L^2}^2 
\les \bra{t}^{-\frac32+4\delta}\ve_1^4.
\end{align*}
The bounds for $\cS_5^\pm$ follows similarly, by \eqref{eq:linear} and Lemma~\ref{lem:esti-null},
\begin{align*}
\| \mathcal{S}_5^\pm (t)\|_{L^2}&\les \| \p_x(|u(t)|^2)\|_{L^\infty}\| \p_x^2u(t)\|_{L^2}+\| \p_x^2(|u(t)|^2)\|_{L^2}\| \p_xu(t)\|_{L^\infty}+\| u(t)\|_{W^{1,\infty}}^2\| \p_x^2u(t)\|_{L^2} \\
&\les \bra{t}^{-1}\ve_1^2\| u(t)\|_{H^2}\les \bra{t}^{-1+\delta}\ve_1^3. 
\end{align*}
Regarding the commutator term $\cS_2^\pm$, we divide it into
\begin{align*}
\| \cS_2^\pm(t)\|_{L^2}&\leq \left\| \p_x^2\big[ Q_\pm, 2\alpha |u|^2+\beta \mathcal{H}(|u|^2)\big] \p_xu\right\|_{L^2} \\
&\quad +2\left\| \big[ Q_\pm, \p_x\big( 2\alpha |u|^2+\beta \mathcal{H}(|u|^2)\big) \big] \p_x^2u\right\|_{L^2} +\left\| \big[ Q_\pm, \p_x^2\big( 2\alpha |u|^2+\beta \mathcal{H}(|u|^2)\big) \big] \p_xu\right\|_{L^2}.
\end{align*}
The first term can be estimated in the same approach as in the proof of Lemma~\ref{lem:R5}, using Lemma~\ref{lem:commutator}.
The other two terms can be estimated directly, by H\"older's inequality and Lemma~\ref{lem:esti-null}.
Indeed, we have
\begin{gather}\label{eq:hi-0b}
\| \cS_2^\pm(t)\|_{L^2} \les \left\| \p_x\big( 2\alpha |u(t)|^2+\beta \mathcal{H}(|u(t)|^2)\big) \right\|_{H^1}\| \p_xu(t)\|_{H^1}\les \bra{t}^{-\frac32+3\delta}\ve_1^3.
\end{gather}
Combining these estimates with Lemma~\ref{lem:rho}, we obtain 
\[ \Big| 2{\rm Re} \int_{\R}e^{2\rho_0^\pm[u]}\big( \mathcal{S}^\pm_1(t)+\cS_2^\pm(t)+\cS_5^\pm(t)\big)  \ol{Q_\pm \p_x^2u(t)}\,dx \Big| \les \bra{t}^{-1+2\delta}\ve_1^4.\]

It remains to handle $\cS_3^\pm$ and $\cS_4^\pm$. 
To this end, we introduce the following lemma.
\begin{lemma}\label{lem:high-energy}
Under the hypotheses of Proposition~\ref{prop:energy}, writing $w:=e^{2\rho_0^\pm[u]}Q_{\pm}\p_x^2u$, we have
\begin{align}
\left| \int_0^t \int_{\R} Q_\pm \big( \alpha [u(s)]^2\ol{\p_x^3 u(s)} +\beta \mathcal{H}(u(s)\ol{\p_x^3 u(s)})u(s) \big)  \ol{w(s)}\,dx\,ds \right| &\les \bra{t}^{2\delta}\ve_1^4, \label{eq:hi-1a}	\\
\left\| Q_\pm \big( u(t)\mathcal{H}(\ol{u(t)}\p_x^3u(t))\big) \pm iu(t)\ol{u(t)}Q_\pm \p_x^3u(t) \right\|_{L^2} &\les \bra{t}^{-1+\delta}\ve_1^3. \label{eq:hi-1b}
\end{align}
\end{lemma}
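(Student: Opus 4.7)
The plan handles the two estimates by different methods. For \eqref{eq:hi-1b}, I use a purely algebraic cancellation based on $\cH Q_\pm = \mp i Q_\pm$. Decomposing $\p_x^3 u = Q_+\p_x^3 u + Q_-\p_x^3 u + P_{\le 1}\p_x^3 u$ and applying $\cH(\ol u\, Q_\pm \p_x^3 u) = \mp i\ol u\, Q_\pm \p_x^3 u + [\cH,\ol u]Q_\pm \p_x^3 u$, the product with $u$ followed by the outer $Q_\pm$ yields $\mp i|u|^2 Q_\pm \p_x^3 u$ modulo commutators, which precisely cancels the added $\pm i|u|^2 Q_\pm \p_x^3 u$. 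The residuals are (a)~$u[\cH,\ol u]Q_\pm \p_x^3 u$, where the commutator $[\cH,\ol u]$ is a bilinear Hilbert-transform commutator that effectively gains a derivative, reducing to a bound by $\|\p_x\ol u\|_{L^\infty}\|Q_\pm \p_x^2 u\|_{L^2}$; (b)~$[Q_\pm,|u|^2]Q_\pm \p_x^3 u$, handled by Lemma~\ref{lem:commutator}; (c)~the cross-sign contribution from $Q_\mp \p_x^3 u$, where $Q_\pm Q_\mp = 0$ again forces a gain-of-one-derivative commutator; and (d)~the low-frequency piece $P_{\le 1}\p_x^3 u$, bounded directly by Bernstein as $\les \|u\|_{L^2}$. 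Combining the dispersive decay $\|u\|_{W^{1,\infty}}\les \bra{t}^{-1/2}\ve_1$ with $\|u\|_{H^2}\les \bra{t}^\de\ve_1$, each residual meets the target $\bra{t}^{-1+\de}\ve_1^3$.

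For \eqref{eq:hi-1a}, the key observation is the spatial null identity $\p_x^3 u \cdot \p_x^2 u = \tfrac12 \p_x[(\p_x^2 u)^2]$. In the scalar (projection- and gauge-free) prototype, an integration by parts against $u^2$ yields
\[
\int u^2\, \ol{\p_x^3 u\cdot \p_x^2 u}\,dx = -\int u\,\p_x u\,(\ol{\p_x^2 u})^2\,dx,
\]
bounded by $\|u\|_{L^\infty}\|\p_x u\|_{L^\infty}\|\p_x^2 u\|_{L^2}^2 \les \bra{s}^{-1+2\de}\ve_1^4$; integration on $s\in[0,t]$ then produces exactly the claimed $\bra{t}^{2\de}\ve_1^4$. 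The non-local term $\beta \cH(u\,\ol{\p_x^3 u})u$ is handled by the analogous identity via $\p_x\cH = \cH\p_x$ together with the skew-adjointness of $\cH$. To accommodate the projection $Q_\pm$ and the gauge weight $e^{2\rho_0^\pm[u]}$ built into $\ol w = e^{2\rho_0^\pm[u]}Q_\mp\ol{\p_x^2 u}$, I appeal to self-adjointness of $Q_\pm$, the identity $\ol{Q_\pm f} = Q_\mp\ol f$, and the orthogonality $Q_\pm Q_\mp = 0$ to write $Q_\pm\ol w = [Q_\pm,e^{2\rho_0^\pm[u]}]\,Q_\mp\ol{\p_x^2 u}$; the resulting commutator corrections are controlled through Lemma~\ref{lem:rho} and the null-form bound of Lemma~\ref{lem:esti-null}.

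The main obstacle in \eqref{eq:hi-1a} is that a naive spatial IBP in the full projected expression tends to return the original integral as a tautology, since $w$ carries $Q_\pm\p_x^2 u$ rather than $\p_x^2 u$. The resolution is to exploit $Q_\pm Q_\mp = 0$ directly: the portion of $Q_\pm\ol w$ that would obstruct the cancellation is forced to act through the high-frequency part of $e^{2\rho_0^\pm[u]}$, which is small in $L^\infty$ owing to $\|\p_x e^{2\rho_0^\pm[u]}\|_{L^2}\les \|u\|_{L^4}^2\les \bra{s}^{-1/2}\ve\ve_1$. This smallness, combined with the clean scalar identity above, closes the estimate despite $u$ having only $H^2$ regularity.
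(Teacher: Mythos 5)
Your plan for \eqref{eq:hi-1b} is a genuinely different but workable route: where the paper writes the expression in Fourier variables, does a dyadic decomposition, and observes that the symbol $m_2$ vanishes in the dangerous high-low-low case, you carry out the same cancellation on the physical side via $\cH Q_\pm=\mp iQ_\pm$. That is fine in spirit, but the residuals you isolate are not actually covered by Lemma~\ref{lem:commutator} as stated: for $[Q_\pm,|u|^2]Q_\pm\p_x^3u$ and $[Q_\mp,\ol u]\,\p_x^3u$, the bound \eqref{eq:comm-all} would require $\normo{\wh{\p_x^3u}}_{L^1_\xi}$, which is not controlled by $\|u\|_{H^2}$. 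You need the additional structural point — already implicit in the paper's dyadic case analysis — that the commutator forces the \emph{low} factor to carry the dominant frequency, which lets you trade a derivative from $\p_x^3u$ onto $|u|^2$ (resp.\ $\ol u$) before invoking $\normo{\p_x(|u|^2)}_{L^\infty}$ from Lemma~\ref{lem:esti-null}. So the idea is right, but a refined commutator estimate is required; the proposal as written under-proves this step.

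For \eqref{eq:hi-1a} there is a genuine gap. Your scalar prototype $\int u^2\,\ol{\p_x^3u\,\p_x^2u}\,dx=-\int u\p_xu\,(\ol{\p_x^2u})^2\,dx$ is correct, but it does not survive the projections. After moving the outer $Q_\pm$ off the bracket you get $\ol{Q_\pm w}=Q_\mp\ol w$, \emph{not} $Q_\pm\ol w$; since $Q_\mp Q_\mp=Q_\mp$, this gives $\ol w$ back up to a commutator — so the ``$Q_\pm Q_\mp=0$'' smallness you invoke never actually materializes. You are then left with $\int u^2 e^{2\rho_0^\pm}\,\ol{\p_x^3u}\,Q_\mp\ol{\p_x^2u}\,dx$ and, decomposing $\ol{\p_x^3u}$ into signed pieces, the cross-sign term $\int u^2e^{2\rho_0^\pm}\,Q_\pm\ol{\p_x^3u}\cdot Q_\mp\ol{\p_x^2u}\,dx$ is not a total $x$-derivative: localizing to frequencies $N_1\sim N_2\sim N\gg1$ with $u^2e^{2\rho_0^\pm}$ low, the best pointwise bound is $\bra{t}^{-1}\ve_1^2\cdot N\,\|P_{N}\p_x^2u\|_{L^2}^2$, which is not summable with only $u\in H^2$. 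The same obstruction appears for the $\beta$ term, since integrating $\p_x\cH(u\ol{\p_x^2u})\cdot ue^{2\rho_0^\pm}Q_\mp\ol{\p_x^2u}$ by parts regenerates a $\p_x^3u$ factor. The paper resolves precisely this term by a completely different mechanism: in \eqref{eq:hi-1a-f} the cubic symbol is split via $(\xi-\eta-\sigma)^2=(\xi-\eta)(\xi-\eta-\sigma)+(\xi-\sigma)(\xi-\eta-\sigma)-(\xi-\eta)(\xi-\sigma)+\eta\sigma$, and the $\eta\sigma$ piece (the bad one) is killed by an integration by parts in \emph{time} against $e^{2is\eta\sigma}$, using Lemma~\ref{lem:derivativ-f} for the resulting $\p_s\wh f$ terms. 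Your proposal has no substitute for this temporal non-resonance argument, so \eqref{eq:hi-1a} is not proved.
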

Putting \eqref{eq:energy-v-time}, \eqref{eq:hi-0a}, \eqref{eq:hi-0b}, and Lemma~\ref{lem:high-energy} together, we get
\[ \| e^{\rho_0^\pm[u(t)]}Q_\pm \p_x^2u(t)\|_{L^2}^2\les \| e^{\rho_0^\pm[\phi]}Q_\pm \p_x^2\phi \|_{L^2}^2+\bra{t}^{2\delta}\ve_1^4\les \ve^2+\bra{t}^{2\delta}\ve_1^4,\]
which shows \eqref{eq:high-v} and completes the proof of Proposition~\ref{prop:energy}.
\hfill $\qed$

\medskip
We devote ourselves to proving Lemma~\ref{lem:high-energy} in the rest of this section.

\subsection{Proof of \eqref{eq:hi-1a}}
We observe that the left-hand side of \eqref{eq:hi-1a} can be written as
\begin{align}\label{eq:hi-1a-f}
	 \int_0^t\int_{\R^3}e^{2is\eta\sigma}(\xi-\eta-\sigma)^3m_1(\xi,\sigma)\wh{f}(s,\xi-\eta)\ol{\wh{f}(s,\xi-\eta-\sigma)}\wh{f}(s,\xi-\sigma)\ol{\wh{h}(s,\xi)}\,d\eta d\sigma d\xi \,ds,
\end{align}
where
\[ m_1(\xi,\sigma):=\frac{(-i)^3}{2\pi}(\varrho_{>1}\chi_\pm) (\xi) \big( \al - i \beta \sgn{\sigma}\big) ,\quad f(t):=e^{-it\p_x^2}u(t),\quad h(t):=e^{-it\p_x^2}w(t).\]
We further decompose \eqref{eq:hi-1a-f} according to the identity
\begin{align*}
(\xi-\eta-\sigma)^2=(\xi-\eta)(\xi-\eta -\sigma) +(\xi-\sigma)(\xi-\eta-\sigma)-(\xi-\eta)(\xi-\sigma) +\eta\sigma  .
\end{align*}
The contribution from the first three terms on the right-hand side 
can be treated straightforwardly, bounded by
\begin{align*}
\int_0^t \| u(s)\|_{W^{1,\infty}}^2\| u(s)\|_{H^2}\| w(s)\|_{L^2}\,ds &\les \int_0^t\bra{s}^{-1+2\delta}\ve_1^4\,ds \les \bra{t}^{2\delta}\ve_1^4,
\end{align*}
Hence, it suffices to consider the term
\begin{gather}\label{eq:hi-1a-f2}
\int_0^t\int_{\R^3}e^{2is\eta\sigma}\eta\sigma(\xi-\eta-\sigma)m_1(\xi,\sigma)\wh{f}(s,\xi-\eta)\ol{\wh{f}(s,\xi-\eta-\sigma)}\wh{f}(s,\xi-\sigma)\ol{\wh{h}(s,\xi)}\,d\eta d\sigma d\xi \,ds.
\end{gather}
Using integration by parts in time, \eqref{eq:hi-1a-f2} can be bounded by the following
\begin{align}
&\begin{aligned}
& \int_{\R^3}e^{2is\eta\sigma}(\xi-\eta-\sigma)m_1(\xi,\sigma)\wh{f}(s,\xi-\eta)\ol{\wh{f}(s,\xi-\eta-\sigma)}\wh{f}(s,\xi-\sigma)\ol{\wh{h}(s,\xi)}\,d\eta d\sigma d\xi  \bigg|_{s=0,t},
\end{aligned} \label{eq:nf-t-1} \\
&\int_0^t \!\int_{\R^3}e^{2is\eta\sigma}(\xi-\eta-\sigma)m_1(\xi,\sigma)\p_s\Big[ \wh{f}(s,\xi-\eta)\ol{\wh{f}(s,\xi-\eta-\sigma)}\wh{f}(s,\xi-\sigma)\Big] \ol{\wh{h}(s,\xi)}\,d\eta d\sigma d\xi \,ds, \label{eq:nf-t-2}\\
& \int_0^t\!\int_{\R^3}e^{2is\eta\sigma}(\xi-\eta-\sigma)m_1(\xi,\sigma)\wh{f}(s,\xi-\eta)\ol{\wh{f}(s,\xi-\eta-\sigma)}\wh{f}(s,\xi-\sigma)\ol{\p_s\wh{h}(s,\xi)}\,d\eta d\sigma d\xi \,ds.\label{eq:nf-t-3}
\end{align}
Then \eqref{eq:nf-t-1} can be bounded by
\[ \max_{s=0,t} \| u(s)\|_{L^\infty}^2\| \p_xu(s)\|_{L^2}\| w(s)\|_{L^2} \les \ve^4+ \bra{t}^{-1+2\de}\ve_1^4 \les \bra{t}^{2\de} \ve_1^4 .\]
Analogously, we estimate \eqref{eq:nf-t-2}, using \eqref{eq:derivative-f} as follows:
\[ \int_0^t \| u(s)\|_{L^\infty}\| u(s)\|_{W^{1,\infty}}\| \p_tf(s)\|_{H^1}\| w(s)\|_{L^2} \,ds\les \int_0^t \bra{s}^{-2+\zeta +2\de}\ve_1^6\,ds\les \ve_1^6
. \]
For \eqref{eq:nf-t-3}, we evaluate this term by
\[ \int_0^t \left\| Q_{\pm} [\al u^2\p_x\ol{u}+ \beta \cH(u\p_x\ol{u}) u] (s)\right\|_{H^1}\| \p_s h(s)\|_{H^{-1}}\,ds. \]
On  one hand, we have
\begin{align*}
	 \left\| Q_{\pm} [\al u^2\p_x\ol{u}+ \beta \cH(u\p_x\ol{u}) u] (s)\right\|_{H^1} &\les \| u(s)\|_{W^{1,\infty}}^2\| \p_xu(s)\|_{H^1}\les \bra{s}^{-1+\delta}\ve_1^3.
\end{align*}
On the other hand, we see that
\begin{align*}
e^{is\p_x^2}\p_s  h&=(\p_s-i\p_x^2)(e^{2\rho_0^\pm[u]}Q_{\pm}\p_x^2u) \\
&=\big[ (\p_s-i\p_x^2)e^{2\rho_0^\pm[u]}\big] Q_{\pm}\p_x^2u -2i\big[ \p_xe^{2\rho_0^\pm[u]}\big] Q_{\pm}\p_x^3u+e^{2\rho_0^\pm[u]}Q_{\pm}\p_x^2e^{it\p_x^2}\p_sf.
\end{align*}
Using the embedding $L^1\hookrightarrow H^{-1}$ and the estimate $\| \phi \psi\|_{H^{-1}}\les \| \phi\|_{W^{1,\infty}}\| \psi\|_{H^{-1}}$, we have
\begin{align*}
\| \p_s h\|_{H^{-1}}&\les \| (\p_s-i\p_x^2)e^{2\rho_0^\pm[u]}\|_{L^\infty}\| Q_{\pm}\p_x^2u\|_{L^2}+\| \p_xe^{2\rho_0^\pm[u]}\|_{W^{1,\infty}}\| Q_{\pm}\p_x^3u\|_{H^{-1}}\\
&\quad +\| e^{2\rho_0^\pm[u]}\|_{W^{1,\infty}}\| Q_{\pm}\p_x^2e^{it\p_x^2}\p_sf\|_{H^{-1}}.
\end{align*}
By Lemma~\ref{lem:rho} and \eqref{eq:derivative-f}, this yields that
\[
 \|\p_s h(s)\|_{H^{-1}} \les \bra{s}^{-1+\zeta+\delta}\ve_1^3.
 \]
Combining these estimates, we obtain 
\[ 
\left|\eqref{eq:nf-t-3} \right|\les \int_0^t\bra{s}^{-2+\zeta+2\de}\ve_1^6\,ds \les \ve_1^6. \]
This completes the proof of \eqref{eq:hi-1a}.
\hfill $\qed$

\subsection{Proof of \eqref{eq:hi-1b}}

The left-hand side of \eqref{eq:hi-1b} can be written as
\[ \Big\| \int_{\R^2}(\xi-\sigma)^3m_2(\xi,\eta,\sigma)\wh{u}(t,\xi-\eta)\ol{\wh{u}(t,\xi-\eta-\sigma)}\wh{u}(t,\xi-\sigma)\,d\eta d\sigma \Big\|_{L^2_\xi} \]
with
\begin{align*}
m_2(\xi,\eta,\sigma)&:=\frac1{2\pi}\Big[ (\varrho_{>1}\chi_\pm)(\xi)(-i)\sgn{\eta}\pm i(\varrho_{>1}\chi_\pm)(\xi-\sigma)\Big] \\
&\;=\frac{-i}{2\pi}\Big[ (\varrho_{>1}\chi_\pm)(\xi)\sgn{\eta} -(\varrho_{>1}\chi_\pm)(\xi-\sigma)\sgn{\xi-\sigma}\Big] .
\end{align*}
We decompose the frequencies $|\xi|,|\xi-\eta|,|\xi-\eta-\sigma|$, and $|\xi-\sigma|$ into the dyadic numbers $N_0,N_1,N_2,$ and $N_3\ge 1$, respectively%
\footnote{We use the inhomogeneous Littlewood-Paley decomposition here.
Thus, we regard $\varrho_1$ as $\varrho_{\leq 1}$ in \eqref{eq:multiplier-hi} and $P_1$ as $P_{\leq 1}$ in the proof below.}%
.
Then we estimate
\begin{align}\label{eq:hi-1b-f}
	 \left\| \int_{\R^2}(\xi-\sigma)^3m_{2,\mathbf{N}}(\xi,\eta,\sigma)\wh{u}(t,\xi-\eta)\ol{\wh{u}(t,\xi-\eta-\sigma)}\wh{u}(t,\xi-\sigma)\,d\eta d\sigma \right\|_{L^2_\xi},
\end{align}
for each 4-tuple $\mathbf{N} = (N_0,N_1,N_2,N_3)$, where
\begin{align}\label{eq:multiplier-hi}
	\begin{aligned}
		m_{2,\mathbf{N}}(\xi,\eta,\sigma)&:= m_2(\xi,\eta,\sigma )\varrho_{\mathbf{N}}(\xi,\eta,\sigma), \\ 
		\varrho_{\textbf{N}}(\xi,\eta,\sigma) &:= \varrho_{N_0}(\xi)\varrho_{N_1}(\xi-\eta)\varrho_{N_2}(\xi-\eta-\sigma)\varrho_{N_3}(\xi-\sigma).
	\end{aligned}
\end{align}
Then it suffices to consider the following cases:
\begin{align*}
	\begin{aligned}
		\mbox{Case A:}&\quad N_{\max}\sim N_{{\rm med}},\quad \left\{ \begin{aligned} \mbox{Case A1:}&\quad N_1\gtrsim N_3,\\ \mbox{Case A2:}&\quad N_1\ll N_3\sim N_2,\end{aligned}\right. \\
		\mbox{Case B:}&\quad N_0\sim N_1\gg N_2,N_3,\\
		\mbox{Case C:}&\quad N_0\sim N_3\gg N_1,N_2,\\
		\mbox{Case D:}&\quad N_0\sim N_2\gg N_1,N_3,
	\end{aligned}
\end{align*}
where $N_{\max}$, $N_{{\rm med}}$ are the maximum and the median of $N_1,N_2,N_3$, respectively. 
Note that the high-low interaction (Case C) does not occur thanks to the cancellation property in $m_2$, which is an advantage of the gauge transformation. 
Thus we only consider the remaining cases.

Now, we can bound \eqref{eq:hi-1b-f} roughly, as
\[ \eqref{eq:hi-1b-f}\les \| P_{N_1}u\|_{L^\infty}\| P_{N_2}u\|_{L^\infty}N_3\| P_{N_3}\p_x^2u\|_{L^2}.\]
In Case A1 or Case B, we have $N_1\sim N_{\max}$ and by \eqref{eq:linear-local},
\begin{align*}
\eqref{eq:hi-1b-f}&\les N_1 \| P_{N_1}u\|_{L^\infty}\| P_{N_2}u\|_{L^\infty}\| P_{N_3}\p_x^2u\|_{L^2} \\
&\les N_1^{-\kappa}\bra{t}^{-\frac12}\ve_1\cdot N_2^{-\kappa}\bra{t}^{-\frac12}\ve_1\cdot \bra{t}^{\delta}\ve_1 \les N_{\max}^{-\kappa}\bra{t}^{-1+\delta}\ve_1^3.
\end{align*}
Similarly, in Case A2 or Case D, we have $N_2\sim N_{\max}$ and
\[ \eqref{eq:hi-1b-f}\les \| P_{N_1}u\|_{L^\infty}N_2\| P_{N_2}u\|_{L^\infty}\| P_{N_3}\p_x^2u\|_{L^2} \les N_{\max}^{-\kappa}\bra{t}^{-1+\delta}\ve_1^3. \]
Since $N_0\les N_{\max}$ only contributes, the resulting bound is summable in $\mathbf{N}$ and yields \eqref{eq:hi-1b}.
\hfill $\qed$

\section{Weighted estimates}\label{sec:weighted}

In this section, we prove the following proposition.
\begin{prop}\label{prop:weighted}
Let $T>0$ and $0<\ve\leq \ve_1<1$. 
Suppose that $u\in C([0,T], H^2 \cap H^{1,1})$ is a solution to \eqref{kdnls} with initial data $\phi$ satisfying \eqref{thm:initial} and that $u$ satisfies a priori assumption \eqref{eq:assumption-apriori}. 
Then we have
\begin{align}\label{eq:weighted-u}
\| xf(t)\|_{H^1}\les \ve+\bra{t}^{2\delta}\ve_1^2
\end{align}
for $t\in [0,T]$, where $f(t)=e^{-it\p_x^2}u(t)$.
\end{prop}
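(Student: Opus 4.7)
The approach is to mirror the proof of Proposition~\ref{prop:energy}, replacing the spatial derivative $\p_x^2$ by the Galilean generator $J(t):=x+2it\p_x$, which commutes with $\p_t-i\p_x^2$ and satisfies $J(t)u(t)=e^{it\p_x^2}(xf(t))$, so that $\|xf(t)\|_{H^1}=\|J(t)u(t)\|_{H^1}$. As a first step I would decompose
\[ Ju \;=\; P_{\le 1}Ju + \sum_\pm e^{-\rho^\pm[u]}Jv_\pm + (\text{commutator errors}), \]
with $v_\pm$ as in Definition~\ref{def:gauge}. The low-frequency piece is bounded by $\|u\|_{L^2}\les \ve$ via Lemma~\ref{lem:L2energy}, and the commutator terms $[J,e^{-\rho^\pm[u]}]v_\pm=2it(\p_xe^{-\rho^\pm[u]})v_\pm$ and $e^{-\rho^\pm[u]}[J,Q_\pm]u$ are acceptable thanks to Lemma~\ref{lem:rho} and Corollary~\ref{cor:apriori-v}. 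This reduces the task to bounding $\|Jv_\pm(t)\|_{H^1}$.

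Applying $J$ to the $v_\pm$-equation \eqref{eq:v-gauge-transform} and using $[J,\p_x]=-1$, the bounded rank-one commutator $[J,\cH]=[x,\cH]$, and Leibniz-type identities such as $J(|u|^2u)=2|u|^2Ju-u^2\ol{Ju}$ together with their nonlocal analogues, one derives an evolution equation of the schematic form
\[ (\p_t-i\p_x^2)Jv_\pm \;=\; G_\pm[u]\,Jv_\pm + \mathcal{E}_\pm, \]
where $\mathcal{E}_\pm$ collects (i) the commutator $[J,G_\pm[u]]v_\pm=2it(\p_xG_\pm[u])v_\pm$, (ii) $e^{\rho^\pm[u]}J\mathcal{R}_5^\pm$, (iii) $J$ applied to the self-interaction potentials in \eqref{eq:v-energy1}--\eqref{eq:v-energy2}, and (iv) $[J,\cH]$-type boundary errors.

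The energy estimate then follows the same scheme as Section~\ref{sec:energy}. The diagonal coupling $G_\pm[u]\,Jv_\pm$ is controlled by $\|G_\pm[u]\|_{L^\infty}\les \bra{t}^{-2+2\de}\ve_1^4$ from Lemma~\ref{lem:G}; contribution (i) loses the factor $2it$ but the gain $\|\p_xG_\pm[u]\|_{H^1}\les\bra{t}^{-5/2+\zeta+2\de}\ve_1^4$ more than compensates. Contribution (iii) is bounded by Lemmas~\ref{lem:esti-null} and \ref{lem:HLinfty}. The cubic term (ii) is the main work: the trilinear expressions coming from $J\mathcal{R}_5^\pm$ are split on the Fourier side in $\wh{f}$ into a non-resonant high-high piece, which is removed by integration by parts in time invoking Lemma~\ref{lem:derivativ-f} (as in the proof of \eqref{eq:hi-1a}), and a high-low piece that is cancelled by the gauge exactly as in \eqref{eq:hi-1b}. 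Integrating and using $\|Jv_\pm(0)\|_{H^1}\les \|x\phi\|_{H^1}+\|\phi\|_{H^1}^3\les \ve$ yields $\|Jv_\pm(t)\|_{H^1}\les \ve+\bra{t}^{2\de}\ve_1^2$, proving \eqref{eq:weighted-u}.

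The main obstacle is the interaction of $J$ with the nonlocal Hilbert-transform nonlinearities: because $[J,\cH]$ is only a bounded rank-one correction and does not produce a derivative, the factor $2it$ hidden in $J$ cannot be absorbed by the standard $\bra{t}^{-1}$-type decay that one gets from commutators involving $\p_x$. This is precisely why the present argument requires the refined gauge $\rho^\pm$ (with its imaginary part), rather than the simpler $\rho_0^\pm$ used in Section~\ref{sec:energy}: the imaginary part of $\rho^\pm$ is tuned so that the problematic high-low interactions $\alpha|u|^2\p_xu$ and $\beta\cH(|u|^2)\p_xu$ are absorbed into the gauge potential $G_\pm[u]$, leaving only contributions with enough spatial derivatives or enough structural cancellation to be amenable to the above analysis.
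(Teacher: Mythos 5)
Your overall scheme is the paper's: commute $J$ through the gauge-transformed equation for $v_\pm$, use the full gauge $\rho^\pm$ (imaginary part included) so that the dangerous high--low quadratic potentials $\alpha|u|^2+\beta\cH(|u|^2)$ are absorbed into a self-interaction factor, and treat the cubic remainder $\mathcal{R}_5^\pm$ by Fourier-side decomposition plus normal forms. Your explanation of why $\rho^\pm$ rather than $\rho_0^\pm$ is needed is also the paper's (cf.~Remark~\ref{rem:gauge}). However, there are two genuine gaps.

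First, your bound on the low-frequency piece is wrong: $\|P_{\le 1}Ju\|_{L^2}$ is not controlled by $\|u\|_{L^2}$. Since $J=x+2it\p_x$, a factor $t$ (or the weighted norm itself) unavoidably enters; indeed the paper's reduction gives $\|\p_x Ju\|_{L^2}\les \|Ju\|_{L^2}+\|u\|_{L^2}+\sum_\pm\|\p_xJv_\pm\|_{L^2}+\ldots$, so one needs a separate $L^2$ estimate on $Ju$. The paper supplies this in \eqref{eq:weighted-u-0} by a direct energy method applied to the $Ju$-equation (no gauge transformation needed at the $L^2$ level, since there is no derivative loss there). You must add this step; replacing it by $\|u\|_{L^2}\les\ve$ does not close the argument.

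Second, you dispose of the self-interaction potentials in \eqref{eq:v-energy1}--\eqref{eq:v-energy2} by saying they are ``bounded by Lemmas~\ref{lem:esti-null} and~\ref{lem:HLinfty}.'' That does not work for the anti-derivative term
$i\tfrac{\beta^2}{2}\,\wt{\p}_x^{-1}\cH\bigl[\cH(|u|^2)\p_x(|u|^2)\bigr]\,v_\pm$
from \eqref{eq:v-energy1}: the factor $\wt{\p}_x^{-1}\cH[\cdots]$ is in general unbounded, and neither of those lemmas controls it. The paper's key observation is that in the $L^2$ pairing $\int \p_xJ(\cdots)\,\ol{\p_xJv_\pm}\,dx$ the pieces can be integrated by parts so that the anti-derivative factor disappears entirely (identity \eqref{eq:antider}); without that algebraic cancellation the energy inequality cannot be closed, and even the differentiability of $t\mapsto\|\p_xJv_\pm(t)\|_{L^2}^2$ needs a workaround (Remark~\ref{rem:justify-weighted}). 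Relatedly, the contribution of \eqref{eq:v-energy2} is a purely imaginary, $x$-independent scalar multiplier and so vanishes upon taking $2\,\mathrm{Re}$ --- it does not need an estimate at all; lumping it with the anti-derivative term obscures the structure that makes the argument work.
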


\subsection{Setup}

It is convenient to introduce  the operator $J(t):=x+2it\p_x$.
Some of its properties are listed below:
\begin{itemize}
\item $J(t)=e^{it\p_x^2}xe^{-it\p_x^2}$,\quad $J(t)u(t)=e^{it\p_x^2}xf(t)$,\quad $[J(t),\p_t-i\p_x^2]=0$.
\item $J[fg]=2it(\p_xf)g+fJg$,\quad $J[f\ol{g}h]=(Jf)\ol{g}h-f(\ol{Jg})h+f\ol{g}Jh$. \\
$J[f\mathcal{H}(\ol{g}h)]=(Jf)\mathcal{H}(\ol{g}h)-f\mathcal{H}((\ol{Jg})h)+f\mathcal{H}(\ol{g}Jh)$, same holds if $\cH$ is replaced with $Q_{\pm}$.
\item $[J,\p_x]=-1$,\quad $[J,P_{\leq 1}]=[x,P_{\leq 1}]=(i\varrho'_{\leq 1})(-i\p_x)$,\quad $[J,Q_{\pm}]=[x,Q_{\pm}]=(i\varrho'_{>1}\chi_\pm)(-i\p_x)$. \\
These operators are spatial Fourier multipliers and bounded on $L^p$ for any $1\leq p\leq \infty$.
\item $[J,P_\pm]=[x,P_\pm]:f\mapsto \pm \dfrac{i}{2\pi}\displaystyle\int_{\R}f(x)\,dx$,\quad $[J,\cH]=[x,\cH]:f\mapsto \dfrac{1}{\pi}\displaystyle\int_{\R}f(x)\,dx$. \\
These operators are bounded only from $L^1$ to $L^\infty$.
\end{itemize}
Let us give a proof for the last item.
We see that
\[ x\cH[f](x)-\cH[xf](x) = \frac{1}{\pi} p.v.\int \Big( x\frac{f(y)}{x-y}-\frac{yf(y)}{x-y}\Big) \,dy =\frac{1}{\pi}\int f(y)\,dy,\]
which shows the second claim.
The first claim follows from the identity $P_\pm=\frac12({\rm Id}\pm i\cH)$.

Similarly to the preceding section, we may assume that $u$ is sufficiently regular, so that the following calculations can be justified(however, see Remark~\ref{rem:justify-weighted} below).
We first show the $L^2$ estimate:
\begin{align}\label{eq:weighted-u-0}
\| xf(t)\|_{L^2}=\| J(t)u(t)\|_{L^2}\les \ve+\bra{t}^{2\de}\ve_1^3.
\end{align}
Since $Ju$ satisfies the following relation
\begin{align*}
(\p_t-i\p_x^2)Ju&=\alpha \p_x\Big( 2|u|^2Ju-u^2\ol{Ju}\Big) +\beta \p_x\Big( \cH(|u|^2)Ju+\cH(\ol{u}Ju-u\ol{Ju})u\Big) \\
&\qquad -\alpha |u|^2u-\beta \cH(|u|^2)u,
\end{align*}
by the a priori assumption \eqref{eq:assumption-apriori}, \eqref{eq:linear}, and Lemma~\ref{lem:esti-null}, we obtain
\begin{align*}
\frac{d}{dt}\| J(t)u(t)\|_{L^2}^2&\les \| u(t)\|_{L^\infty}^2\| J(t)u(t)\|_{L^2}\| \p_xJ(t)u(t)\|_{L^2}+\| \p_x\cH(|u(t)|^2)\|_{L^\infty}\| J(t)u(t)\|_{L^2}^2  \\
& \qquad+\| \alpha |u(t)|^2u(t)+\beta \cH(|u(t)|^2)u(t)\|_{L^2}\| J(t)u(t)\|_{L^2}\\
&\les \bra{t}^{-1+2\de}\ve_1^3\| J(t)u(t)\|_{L^2}.
\end{align*}
Dividing by $\| J(t)u(t)\|_{L^2}$ and integrating over $[0,t]$, we have \eqref{eq:weighted-u-0}.
Hence, for \eqref{eq:weighted-u} it suffices to verify
\[ \| \p_xJ(t)u(t)\|_{L^2}\les \ve+\bra{t}^{2\de}\ve_1^2.\]
To overcome the derivative loss, we use the gauge transformations described in Section \ref{sec:gauge}.
By the decomposition
\[ Ju=JP_{\leq 1}u+J(e^{-\rho^+[u]}v_+)+J(e^{-\rho^-[u]}v_-) \]
and the aforementioned properties of $J(t)$, we have
\begin{align*}
&\| \p_xJu\|_{L^2}\\
&\les \| Ju\|_{L^2}+\| u\|_{L^2}\\
&\quad +\sum_{\pm}\Big( t\| \p_xe^{-\rho^\pm[u]}\|_{W^{1,\infty}}\| v_\pm\|_{H^1}+\| \p_xe^{-\rho^\pm[u]}\|_{L^\infty}\| Jv_\pm\|_{L^2}+\| e^{-\rho^\pm[u]}\|_{L^\infty}\| \p_xJv_\pm\|_{L^2}\Big).
\end{align*}
Using \eqref{eq:weighted-u-0}, Lemmas~\ref{lem:L2energy}, \ref{lem:rho}, and Corollary~\ref{cor:apriori-v}, we bound above term by
\[ \ve+\bra{t}^{2\de}\ve_1^3+ \bra{t}^{\zeta+\de}\ve_1^3+\bra{t}^{-1+\zeta+2\de}\ve_1^3 +\sum_\pm \| \p_xJv_\pm\|_{L^2}\les \ve+\bra{t}^{2\de}\ve_1^3+\sum_\pm \| \p_xJv_\pm\|_{L^2},\]
for $0 < \zeta < \de$.
Therefore, the proof of Proposition~\ref{prop:weighted} is reduced to showing
\begin{align}\label{eq:weighted-v}
\| \p_xJ(t)v_\pm (t)\|_{L^2}\les \ve+\bra{t}^{2\de}\ve_1^2.
\end{align}
From the equation \eqref{eq:v-gauge-transform}--\eqref{eq:v-energy2} for $v_\pm$, one gets
\begin{align*}
&\frac{d}{dt}\| \p_xJ(t)v_\pm(t)\|_{L^2}^2\\
&=2{\rm Re}\int_{\R}\p_x\Big[ J(t)(\p_t-i\p_x^2)v_\pm(t)+i\p_x^2J(t)v_\pm(t)\Big] \ol{\p_xJ(t)v_\pm (t)}\,dx \\
&=2{\rm Re}\int_{\R}\p_xJ(t) \Big[ e^{\rho^\pm[u]}\mathcal{R}_5^\pm +G_\pm [u]v_\pm +i\frac{\beta^2}{2}\wt{\p}_x^{-1}\cH\big[ \cH(|u|^2)\p_x(|u|^2)\big] v_\pm \Big] \ol{\p_xJ(t)v_\pm(t)}\,dx.
\end{align*}
The form of the anti-derivative $\wt{\p}_x^{-1}$ is defined in Definition \ref{def:gauge}. For the last term on the right-hand side, which comes from \eqref{eq:v-energy1}, we use integration by parts to obtain
\begin{gather}\label{eq:antider}
\begin{aligned}
&2{\rm Re}\, i\frac{\beta^2}{2}\int_{\R}\p_xJ\Big[ \wt{\p}_x^{-1}\cH\big[ \cH(|u|^2)\p_x(|u|^2)\big] v_\pm \Big] \ol{\p_xJv_\pm}\,dx\\
&=2{\rm Re}\, i\frac{\beta^2}{2}\int_{\R}\p_x\Big[ 2it\cH\big[ \cH(|u|^2)\p_x(|u|^2)\big] v_\pm \Big] \ol{\p_xJv_\pm}\,dx\\
&\quad +2{\rm Re}\, i\frac{\beta^2}{2}\int_{\R} \cH\big[ \cH(|u|^2)\p_x(|u|^2)\big] Jv_\pm \ol{\p_xJv_\pm}\,dx\\
&\quad +2{\rm Re}\, i\frac{\beta^2}{2}\int_{\R} \wt{\p}_x^{-1}\cH\big[ \cH(|u|^2)\p_x(|u|^2)\big] |\p_xJv_\pm|^2\,dx\\
&=-2t\beta^2{\rm Re}\int_{\R}\p_x\Big[ \cH\big[ \cH(|u|^2)\p_x(|u|^2)\big] v_\pm \Big] \ol{\p_xJv_\pm}\,dx.
\end{aligned}
\end{gather}
Hence, we get
\begin{align}
\frac{d}{dt}\| \p_xJ(t)v_\pm(t)\|_{L^2}^2
&=2{\rm Re}\int_{\R}\p_xJ \big[ e^{\rho^\pm[u]}\mathcal{R}_5^\pm \big] \ol{\p_xJv_\pm}\,dx \label{eq:wei-1}\\
&\quad +2{\rm Re}\int_{\R}\p_xJ \big[ G_\pm [u]v_\pm \big] \ol{\p_xJv_\pm}\,dx \label{eq:wei-2}\\
&\quad -2t\beta^2{\rm Re}\int_{\R}\p_x\Big[ \cH\big[ \cH(|u|^2)\p_x(|u|^2)\big] v_\pm \Big] \ol{\p_xJv_\pm}\,dx. \label{eq:wei-3}
\end{align}
\begin{rem}\label{rem:justify-weighted}
(i) For the weighted estimates, we need to apply the gauge transformation before applying $\p_xJ$, as $\p_xJe^{\rho^\pm[u]}Q_\pm u$, and not $e^{\rho^\pm[u]}Q_\pm \p_xJu$ as we have done for the $H^2$ energy estimate in the preceding section. Indeed, placed in the $L^2$ norm, the latter would be the same as $e^{\rho_0^\pm[u]}Q_\pm \p_xJu$ and thus become meaningless.

(ii) We remark that the justification of the formal calculations is somewhat more delicate than in the case of the $H^2$ energy estimate.
If $u\in C([0,T],H^k\cap H^{k-1,1})$ for sufficiently large $k$, then $\p_tu\in C([0,T];H^{k-2}\cap H^{k-3,1})$ from the equation \eqref{kdnls}.
However, this does not imply that $\p_xJv_\pm \in C^1([0,T];L^2)$, since $\p_tv_\pm$ has the term \eqref{eq:v-energy1} with possibly unbounded factor $\wt{\p}_x^{-1}\cH\big[ \cH(|u|^2)\p_x(|u|^2)\big]$.

Here, we can justify the derivation of the above differential equality \eqref{eq:wei-1}--\eqref{eq:wei-3} as follows.
We first observe
\begin{align*}
\p_xJv_\pm &=\p_x\Big\{ 2it\p_x\big( e^{\rho^\pm[u]}\big) Q_\pm u+e^{\rho^\pm[u]}JQ_\pm u\Big\} \\
&=e^{\rho^\pm[u]}\Big\{ 2it\big(\palbe (|u|^2)\big)^2Q_\pm u+2it \p_x \big( \palbe(|u|^2)Q_\pm u\big) +\palbe (|u|^2)JQ_\pm u+\p_xJQ_\pm u\Big\} \\
&=:e^{\rho^\pm[u]}F_\pm [u].
\end{align*}
Note that $|\p_xJv_\pm|=|e^{\rho_0^\pm[u]}F_\pm [u]|$, and that $e^{\rho_0^\pm[u]}F_\pm [u]\in C^1([0,T];L^2)$.
Then, we can perform the $t$-differentiation as
\[ \frac{d}{dt}\| \p_xJv_\pm\|_{L^2}^2=\frac{d}{dt}\| e^{\rho_0^\pm[u]}F_\pm[u]\|_{L^2}^2=\int_{\R}\p_t\big( |e^{\rho_0^\pm[u]}F_\pm[u]|^2\big)\,dx =\int_{\R}\p_t\big( |\p_xJv_\pm |^2\big)\,dx.\]
We next calculate $\p_t\big( |\p_xJv_\pm |^2\big)$ pointwise, using the equation \eqref{eq:v-gauge-transform}--\eqref{eq:v-energy2} and noticing that all functions are smooth (while possibly unbounded).
Then, the anti-derivative factor $\wt{\p}_x^{-1}\cH\big[ \cH(|u|^2)\p_x(|u|^2)\big]$ disappears, as we have seen in \eqref{eq:antider}.
Now, remaining functions are all smooth and vanishing at infinity, so we are able to do integration by parts operations.
\end{rem}
We consider the terms on the right-hand side one by one. 
Regarding \eqref{eq:wei-3}, by Lemmas~\ref{lem:esti-null}, \ref{lem:HLinfty}, and Corollary~\ref{cor:apriori-v}, we have
\begin{align*}
|\eqref{eq:wei-3}|&\les \bra{t}\| \cH(|u|^2)\|_{W^{1,\infty}}\| \p_x(|u|^2)\|_{H^1}\| v_\pm\|_{H^1}\| \p_xJv_\pm\|_{L^2}\\
&\les \bra{t}\bra{t}^{-1+\zeta}\ve_1^2\bra{t}^{-\frac32+2\delta}\ve_1^2\bra{t}^{\delta}\ve_1\bra{t}^{2\delta}\ve_1\\
&\les \bra{t}^{-\frac32+\zeta +5\delta}\ve_1^6.
\end{align*}
For \eqref{eq:wei-2}, we additionally use Lemma~\ref{lem:G} to obtain 
\begin{align*}
|\eqref{eq:wei-2}|&\les \| \p_xJ(G_\pm [u]v_\pm)\|_{L^2}\| \p_xJv_\pm\|_{L^2} \\
&\les \Big( \bra{t}\| \p_xG_\pm [u]\|_{H^1}\| v_\pm\|_{H^1}+\| G_\pm[u]\|_{W^{1,\infty}}\| Jv_\pm\|_{H^1}\Big) \| Jv_\pm\|_{H^1}\\
&\les \Big( \bra{t}\bra{t}^{-\frac52+\zeta+2\de}\ve_1^4\bra{t}^\delta \ve_1+\bra{t}^{-2+2\delta}\ve_1^4\bra{t}^{2\delta}\ve_1\Big) \bra{t}^{2\de}\ve_1 \les \bra{t}^{-\frac32+\zeta +5\delta}\ve_1^6.
\end{align*}
For the $\mathcal{R}^\pm_5$ part \eqref{eq:wei-1}, we decompose as follows,
\[ \p_xJ(e^{\rho^\pm[u]}\mathcal{R}^\pm _5)=2it \p_x\big( \p_xe^{\rho^\pm [u]}\cdot \mathcal{R}^\pm_5\big) +\p_xe^{\rho^\pm [u]}\cdot J\mathcal{R}^\pm_5 +e^{\rho^\pm [u]}\p_xJ\mathcal{R}^\pm_5.\]
The $L^2$ norm of the first term can be estimated using Lemmas~\ref{lem:rho} and \ref{lem:R5} as
\begin{align*}
\bra{t} \| \p_x\big( \p_xe^{\rho^\pm [u]}\cdot \mathcal{R}^\pm_5\big)\|_{L^2}&\les \bra{t}\| \p_xe^{\rho^\pm [u]}\|_{W^{1,\infty}}\| \mathcal{R}^\pm_5\|_{H^1} \les \bra{t}^{-1+\zeta+\de}\ve_1^5.
\end{align*}
To treat the term $\p_xe^{\rho^\pm [u]}\cdot J\mathcal{R}^\pm_5$, we introduce the following lemma.
\begin{lemma}\label{lem:JR5}
Assume that $u$ satisfies the a priori assumption \eqref{eq:assumption-apriori}. Then we have
\[ \| J(t)\mathcal{R}^\pm_5(t)\|_{L^2}\les \bra{t}^{-1+\zeta+2\de}\ve_1^3,\]
for any $\zeta >0$.
\end{lemma}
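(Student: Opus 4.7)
The plan is to expand $J(t)\mathcal{R}_5^\pm$ using the Leibniz-type identities for $J$ acting on products and on $\mathcal{H}$, and to estimate each piece using the a priori bounds $\|Ju\|_{H^1}=\|xf\|_{H^1}\les \bra{t}^{2\de}\ve_1$ and $\|\p_xJu\|_{L^2}\les \bra{t}^{2\de}\ve_1$, together with the pointwise decay $\|u\|_{W^{1,\infty}}\les \bra{t}^{-1/2}\ve_1$ from \eqref{eq:linear} and $\|\mathcal{H}(|u|^2)\|_{L^\infty}\les \bra{t}^{-1+\zeta}\ve_1^2$ from Lemma~\ref{lem:HLinfty}. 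Writing $\mathcal{R}_5^\pm$ as the sum of the four groups appearing in \eqref{eq:def-r5}, I distribute $J$ using $J[f\bar gh]=(Jf)\bar gh-f\overline{Jg}h+f\bar g Jh$ (and the analogue with $\mathcal{H}$ in place of multiplication): when $J$ falls on an undifferentiated factor I use $\|Ju\|_{H^1}\les \bra{t}^{2\de}\ve_1$, and when $J$ hits $\p_xu$ I rewrite $J\p_xu=\p_xJu-u$ so that $\|J\p_xu\|_{L^2}\les \bra{t}^{2\de}\ve_1$. The commutator $[J,Q_\pm]=(i\varrho'_{>1}\chi_\pm)(-i\p_x)$ is a bounded Fourier multiplier on $L^2$, so its contributions are harmless.

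The main obstacle comes from $J\mathcal{H}=\mathcal{H}J+[J,\mathcal{H}]$, where $[J,\mathcal{H}]g$ equals the $x$-independent constant $\frac{1}{\pi}\int g\,dx$. When this commutator acts on $|u|^2$ or $\bar u\p_xu$ it returns a constant of size $\ve_1^2$ with no time decay; if that constant were then multiplied by a factor like $\p_xu$ or $Q_\pm u$, we would only obtain $\bra{t}^{\de}\ve_1^3$, which is insufficient. Two observations resolve this. First, the pointwise identity $J(|u|^2)=(Ju)\bar u - u\overline{Ju}=2it\p_x(|u|^2)$ together with Lemma~\ref{lem:esti-null} yields $\|J(|u|^2)\|_{L^2}\les \bra{t}^{-\frac12+2\de}\ve_1^2$, which controls the case where $J$ is placed inside an $\mathcal{H}$ acting on $|u|^2$. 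Second, and crucially, each of the four groups in $\mathcal{R}_5^\pm$ was constructed precisely to cancel the worst high-low interactions, so the resulting expression has commutator-type structure in which additive constants drop out: for example, in group (D) the constant coming from $[J,\mathcal{H}]|u|^2$ disappears in $[Q_\pm,Jh]\p_xu$, and the same mechanism removes the analogous constants arising in groups (B) and (C).

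Once these constant contributions are discarded, every remaining summand is a trilinear expression in $u$, $\p_xu$, $Ju$, $\p_xJu$ (with at most one $\mathcal{H}$ or $Q_\pm$ applied), which one estimates by placing the two factors carrying only derivatives of order $\le 1$ in $L^\infty$ to gain $\bra{t}^{-1}$ from \eqref{eq:linear} (allowing an extra $\bra{t}^\zeta$ loss when $\mathcal{H}(|u|^2)$ is estimated via Lemma~\ref{lem:HLinfty} or when the cancellation $J|u|^2=2it\p_x(|u|^2)$ is exploited), while the remaining factor containing $Ju$ or $\p_xJu$ contributes $\bra{t}^{2\de}\ve_1$ in $L^2$. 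Summing over the four groups then yields the claimed bound $\| J(t)\mathcal{R}_5^\pm(t)\|_{L^2}\les \bra{t}^{-1+\zeta+2\de}\ve_1^3$.
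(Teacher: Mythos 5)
Your proposal misses the actual technical difficulty and spends effort on a non-issue. Two concrete problems.

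First, the ``main obstacle'' you identify — that $[J,\mathcal{H}]$ produces a spatial constant of size $\ve_1^2$ — does not arise if the Leibniz identity is applied correctly. The paper's list of properties of $J$ includes $J[f\mathcal{H}(\ol{g}h)]=(Jf)\mathcal{H}(\ol{g}h)-f\mathcal{H}((\ol{Jg})h)+f\mathcal{H}(\ol{g}Jh)$, in which the $x$-factors inside $\mathcal{H}$ cancel between the last two terms, so no $[J,\mathcal{H}]$ constant ever appears; for two-factor products one uses $J[fg]=2it(\p_xf)g+fJg$, which likewise never forces $J$ inside $\mathcal{H}$. Your "first observation" $J(|u|^2)=(Ju)\ol u-u\ol{Ju}=2it\p_x(|u|^2)$ is also not an operator identity: $J(|u|^2)=x|u|^2+2it\p_x(|u|^2)$, and what equals $2it\p_x(|u|^2)$ is the combination $(Ju)\ol u-u\ol{Ju}$ that comes out of the three-factor Leibniz rule, not $J$ applied to $|u|^2$. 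Your ``cancellation between the groups'' argument is therefore addressing a phantom term, and it is stated too vaguely to verify in any case.

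Second, and more importantly, your general estimation scheme — ``place the two factors carrying only derivatives of order $\le 1$ in $L^\infty$, put the $Ju$-factor in $L^2$'' — fails on exactly the term that the paper singles out as delicate. After distributing $J$ over $\mathcal{R}_5^\pm$, one produces the term $\mathcal{H}(u\ol{\p_xu})Ju$. You cannot put $\mathcal{H}(u\ol{\p_xu})$ in $L^\infty$: the Hilbert transform is unbounded on $L^\infty$, and unlike $\mathcal{H}(|u|^2)$ there is no analogue of Lemma~\ref{lem:HLinfty} for $\mathcal{H}(u\ol{\p_xu})$ because the antisymmetric part of $u\ol{\p_xu}$ has no null-form gain. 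The commutator structure $[Q_\pm,\mathcal{H}(u\ol{\p_xu})]Ju$ does not rescue this either: Lemma~\ref{lem:commutator} only gives $\|\p_x\mathcal{H}(u\ol{\p_xu})\|_{L^2}\|\wh{Ju}\|_{L^1_\xi}\les\bra{t}^{-1/2+3\de}\ve_1^3$, short of the claimed $\bra{t}^{-1+\zeta+2\de}$. The paper resolves this with a H\"older/Sobolev/interpolation argument using intermediate exponents,
\[
\| \cH(u\ol{\p_xu})Ju\|_{L^2}\leq \| \cH(u\ol{\p_xu})\|_{L^{1/\zeta}}\| Ju\|_{L^{2/(1-2\zeta)}}\les \| u\|_{L^2}^{2\zeta}\| u\|_{L^\infty}^{1-2\zeta}\| \p_xu\|_{L^\infty}\| Ju\|_{H^1}\les \bra{t}^{-1+\zeta+2\de}\ve_1^3,
\]
exploiting the boundedness of $\mathcal{H}$ on $L^{1/\zeta}$ for $\zeta\in(0,1/2)$. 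This step — which is where the $\zeta$ in the lemma's statement actually comes from, in addition to the $\mathcal{H}(|u|^2)$ bound — is entirely absent from your proposal, so the argument as written does not close.
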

\begin{proof}
Since the terms containing $[J,Q_{\pm}]$ can be treated easier, we may consider as if $J$ commutes with $Q_{\pm}$. Then most of the terms can be estimated by $\| J(t)u(t)\|_{H^1}\|u(t)\|_{W^{1,\infty}}^2\les \bra{t}^{-1+2\de}\ve_1^3$, while we need to bound two terms, $\| \cH(|u|^2)J\p_xu\|_{L^2}$ and $\| \cH(u\ol{\p_xu})Ju\|_{L^2}$.
The former can be treated by Lemma~\ref{lem:HLinfty}.
To estimate the latter, we use H\"older inequality, Sobolev embedding, and interpolation as follows: For $0<\zeta<1/2$,
\begin{align*}
\| \cH(u\ol{\p_xu})Ju\|_{L^2}&\leq \| \cH(u\ol{\p_xu})\|_{L^{\frac1\zeta}}\| Ju\|_{L^{\frac{2}{1-2\zeta}}}\les \| u\|_{L^2}^{2\zeta}\| u\|_{L^\infty}^{1-2\zeta}\| \p_xu\|_{L^\infty}\| Ju\|_{H^1}\les \bra{t}^{-1+\zeta+2\delta}\ve_1^3.\qedhere
\end{align*}%
\end{proof}
From Lemmas~\ref{lem:rho} and \ref{lem:JR5}, we have
\[ \| \p_xe^{\rho^\pm [u]}\cdot J\mathcal{R}^\pm_5\|_{L^2}\les \bra{t}^{-1+\zeta}\bra{t}^{-1+\zeta+2\de}\ve_1^5\les \bra{t}^{-2+2\zeta+2\de}\ve_1^5,\]
which implies the desired bound.

It remains to estimate
\[2{\rm Re}\int_{\R}e^{\rho^\pm[u]}\p_xJ \mathcal{R}_5^\pm \cdot \ol{\p_xJv_\pm}\,dx.\]
We consider the following decomposition
\begin{align}
\int_{\R} e^{\rho^\pm[u]}\p_xJ\mathcal{R}^\pm_5\cdot \ol{\p_xJv_\pm}\,dx&=-\int_{\R} J\mathcal{R}^\pm_5\cdot \p_x\Big\{ e^{\rho^\pm[u]}\ol{\p_x\big( 2it\p_xe^{\rho^\pm[u]}\cdot Q_{\pm}u\big)}\Big\} \,dx \label{eq:weighted-r1}\\
&\quad -\int_{\R} J\mathcal{R}^\pm_5\cdot \p_x\Big\{ e^{\rho^\pm[u]}\ol{\p_xe^{\rho^\pm[u]}\cdot JQ_{\pm}u}\Big\} \,dx \label{eq:weighted-r2}\\
&\quad +\int_{\R} \p_xJ\mathcal{R}^\pm_5\cdot \ol{e^{2\rho_0^\pm[u]}\p_xJQ_{\pm}u}\,dx. \label{eq:weighted-main}
\end{align} 
Using Lemmas~\ref{lem:rho} and \ref{lem:JR5}, we can estimate the first two terms as
\begin{align*}
\left|\eqref{eq:weighted-r1} \right|&\les \bra{t}\| J\mathcal{R}^\pm_5\|_{L^2}\| e^{\rho^\pm[u]}\|_{W^{1,\infty}}\Big( \| \p_xe^{\rho^\pm[u]}\|_{W^{1,\infty}}\|\p_xQ_{\pm}u\|_{H^1}+\| \p_x^3e^{\rho^\pm[u]}\|_{L^2}\| Q_{\pm}u\|_{L^\infty}\Big) \\
&\les \bra{t}\bra{t}^{-1+\zeta+2\de}\ve_1^3\Big( \bra{t}^{-1+\zeta}\ve_1^2\bra{t}^{\de}\ve_1+\bra{t}^{-\frac32+2\de}\ve_1^2\bra{t}^{-\frac12}\ve_1\Big) \\
&\les \bra{t}^{-1+2\zeta+3\de}\ve_1^6,\\
\left|\eqref{eq:weighted-r2} \right|&\les \| J\mathcal{R}^\pm_5\|_{L^2}\| e^{\rho^\pm[u]}\|_{W^{1,\infty}}\| \p_xe^{\rho^\pm[u]}\|_{W^{1,\infty}} \| JQ_{\pm}u\|_{H^1} \\
&\les \bra{t}^{-1+\zeta+2\de}\ve_1^3\bra{t}^{-1+\zeta}\ve_1^2\Big( \| Ju\|_{H^1}+\| u\|_{H^1}\Big) \\
&\les \bra{t}^{-2+2\zeta+4\delta}\ve_1^6.
\end{align*}

Putting the estimates obtained so far, we have
\[ \frac{d}{dt}\| \p_x J(t)v_\pm(t)\|_{L^2}^2\leq C\bra{t}^{-1+4\de}\ve_1^4 +2{\rm Re}\eqref{eq:weighted-main} ,\]
with $0<\zeta <\delta/2$.
Therefore, the following lemma finishes the proof of \eqref{eq:weighted-v}.
\begin{lemma}\label{lem:high-weighted}
	Let $u$ satisfy a priori assumption \eqref{eq:assumption-apriori}. Then, writing $w:=e^{2\rho_0^\pm[u]}\p_xJQ_{\pm}u$, we have
	\begin{align}
		\left| \int_0^t\int_{\R} \p_xJ \Big\{ Q_{\pm} (u^2\p_x\ol{u})-2u\p_x\ol{u} Q_{\pm} u\Big\} \ol{w} \,dx \,ds\right| &\les \bra{t}^{4\de} \ve_1^4, \label{eq:we-1} \\
		\left| 2{\rm Re}\int_0^t \!\!\int_{\R} \p_xJ \Big\{ Q_{\pm} \big[ u\mathcal{H}(\p_x\ol{u}\,u)\big] - (Q_{\pm} u)\mathcal{H}(\p_x\ol{u}\,u) -u\p_x\ol{u}Q_{\pm}\mathcal{H}u \Big\} \ol{w} \,dx \,ds\right| &\les \bra{t}^{4\de} \ve_1^4, \label{eq:we-2} \\
		\left| 2{\rm Re}\int_{\R} \p_xJ \Big\{ Q_{\pm} \big[ u\mathcal{H}(\ol{u}\p_xu)\big]  -u\ol{u}Q_{\pm}\mathcal{H} \p_xu\Big\} \ol{w}\,dx\right| &\les \bra{t}^{-1+4\de} \ve_1^4, \label{eq:we-3} \\
		\normo{ \p_xJ [Q_{\pm} ,2\alpha |u|^2 +\beta \mathcal{H}(|u|^2)]\partial_xu }_{L^2} &\les \bra{t}^{-1+2\de} \ve_1^3. \label{eq:we-4}
	\end{align}
\end{lemma}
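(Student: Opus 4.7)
The strategy parallels that of Lemma~\ref{lem:high-energy}: for each of the four identities we estimate the $L^2$ norm of $\p_xJ\{\cdots\}$ and (where applicable) pair against $\bar w$ via Cauchy--Schwarz. From Corollary~\ref{cor:apriori-v} together with $[J,Q_\pm]$ being a bounded Fourier multiplier, we obtain $\|w\|_{L^2}\les \bra{t}^{2\de}\ve_1$ immediately. Throughout, the essential algebraic tools are the identity $\p_xJ=J\p_x+1$ (to peel off the outer derivative), the Leibniz rule $J(f\bar g h)=(Jf)\bar g h - f\overline{Jg}h + f\bar g(Jh)$ listed at the beginning of this section (to distribute $J$ through trilinear forms), the boundedness of $[J,Q_\pm]$ as a Fourier multiplier (to swap $J$ past $Q_\pm$), and the commutator estimates of Lemma~\ref{lem:commutator}.

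For \eqref{eq:we-4}, the ``$1$'' part of $\p_xJ=J\p_x+1$ produces $[Q_\pm,G]\p_xu$ with $G=2\alpha|u|^2+\beta\cH(|u|^2)$, already controlled by \eqref{eq:hi-0b}. For $J\p_x[Q_\pm,G]\p_xu$, I would distribute the inner $\p_x$ onto the commutator to get $[Q_\pm,\p_xG]\p_xu+[Q_\pm,G]\p_x^2u$, estimate each via \eqref{eq:comm-high} combined with Lemma~\ref{lem:esti-null}, and then peel off $J$ via Leibniz: the factor receiving $J$ yields $Ju$ or $\overline{Ju}$ with $L^2$ norm $\les \bra{t}^{2\de}\ve_1$, while the other two factors sit in $L^\infty$ by \eqref{eq:linear} (and Lemma~\ref{lem:HLinfty} for $\cH(|u|^2)$). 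The product gives the claimed $\bra{t}^{-1+2\de}\ve_1^3$.

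For \eqref{eq:we-3}, I would follow the approach of the proof of \eqref{eq:hi-1b}: on the Fourier side, the trilinear symbol of $Q_\pm[u\cH(\bar u\p_xu)]-u\bar u Q_\pm\cH\p_xu$ cancels in the high-low-high configuration in which $\p_xu$ carries the largest frequency, because $\chi_\pm(\xi)\mathrm{sgn}(\xi)=\pm\chi_\pm(\xi)$ on the support of $\varrho_{>1}$. A Littlewood--Paley decomposition combined with the localized decay \eqref{eq:linear-local} then sums over the remaining frequency configurations to give $\|\p_xJ\{\cdots\}\|_{L^2}\les \bra{t}^{-1+2\de}\ve_1^3$; Cauchy--Schwarz against $w$ yields the desired $\bra{t}^{-1+4\de}\ve_1^4$. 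For \eqref{eq:we-1} and \eqref{eq:we-2} the same null-form analysis applies to $Q_\pm[u^2\p_x\bar u]-2u\p_x\bar uQ_\pm u$ and to the three-term combination inside \eqref{eq:we-2}, giving an interior $L^2$ bound $\les \bra{s}^{-1+2\de}\ve_1^3$ pointwise in $s$; pairing with $w$ and integrating produces $\int_0^t \bra{s}^{-1+4\de}\ve_1^4\,ds \les \bra{t}^{4\de}\ve_1^4$.

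The principal obstacle is to prevent the appearance of $\p_x^2 Ju$, since only $\|Ju\|_{H^1}$ is controlled by \eqref{eq:assumption-apriori}. Accordingly, when $J$ is distributed via Leibniz onto a factor already carrying a derivative, the outer $\p_x$ in $\p_xJ$ must land on a different factor. This is exactly what the null-form cancellations in each of the four expressions guarantee---the very cancellations that motivated the construction of $\cR_5^\pm$ and the cubic error $G_\pm[u]$ in Section~\ref{sec:gauge}.
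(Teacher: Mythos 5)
Your proposal has a genuine gap in the treatment of \eqref{eq:we-1} (and the high--low case of \eqref{eq:we-2}), and it hides the main difficulty of the lemma. You claim that all four bounds reduce to a pointwise-in-time estimate $\|\p_xJ\{\cdots\}\|_{L^2}\les \bra{s}^{-1+2\de}\ve_1^3$ followed by Cauchy--Schwarz against $w$. But notice that \eqref{eq:we-1} and \eqref{eq:we-2} are deliberately stated as time-integrated bounds over $\int_0^t$, whereas \eqref{eq:we-3} and \eqref{eq:we-4} are pointwise in $t$. This asymmetry is the whole point: the $Q_\pm(u^2\p_x\ol u)-2u\p_x\ol u\,Q_\pm u$ expression, after $J$ is distributed, carries a symbol $\xi(\xi-\eta-\sigma)=(\xi-\eta)(\xi-\sigma)-\eta\sigma$. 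The first piece is benign, but the resonant piece $\eta\sigma$ cannot be estimated pointwise in time. Once one of the three factors is $Ju$, the null structure of Lemma~\ref{lem:esti-null} becomes unavailable for the bilinear pair containing $Ju$ (it would require the second weight $x^2f$, which the $X_T$ norm does not control), and $\|J(t)u(t)\|_{L^\infty}$ has no decay -- exactly the issue flagged in Remark~\ref{rem:gauge}. The paper's proof handles $\eta\sigma$ by integrating by parts in time against the phase $e^{2is\eta\sigma}$ (a normal-form step), producing boundary terms and $\p_sf$, $\p_sxf$, $\p_sh$ contributions estimated via Lemma~\ref{lem:derivativ-f}; this step is essential and absent from your argument. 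The same normal form reappears in the high--low Case~D of \eqref{eq:we-2}.

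Two subsidiary points. First, you do not distinguish the case when $J$ falls on the undifferentiated $u$ adjacent to $Q_\pm$ in \eqref{eq:we-2}--\eqref{eq:we-3}; the paper treats that case separately by regrouping the terms so that either the $\p_x(u\ol v)$ null structure applies or, in the case of $(\p_xJQ_\pm u)\mathcal{H}(\p_x\ol u\,u)\ol w$, the $2\,{\rm Re}$ yields $\p_x\cH(|u|^2)\cdot e^{2\rho_0^\pm}|\p_xJQ_\pm u|^2$, exploiting an algebraic cancellation not visible at the level of $L^2$ estimates on each piece. Second, in \eqref{eq:we-4} your sequence of distributing the inner $\p_x$ first to get $[Q_\pm,G]\p_x^2u$ and then applying $J$ threatens to produce $J\p_x^2u=\p_x^2Ju+2\p_xu$, and $\|\p_x^2Ju\|_{L^2}$ is not in $X_T$. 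The paper's decomposition \eqref{eq:we-4-1}--\eqref{eq:we-4-3} avoids this by keeping the object $J\p_xu$ (so $\|J\p_xu\|_{L^2}\les\|Ju\|_{H^1}+\|u\|_{L^2}$) and spending the outer $\p_x$ on the commutator structure instead. Your closing remark that ``the null-form cancellations guarantee'' the outer $\p_x$ avoids the $J$-bearing factor is precisely what needs to be demonstrated; it is not automatic and requires either the normal-form step or the careful rearrangements the paper supplies.
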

\begin{rem}\label{rem:gauge}
Here, we mention the reason for using the full gauge transformation $v_\pm$ rather than $e^{\rho_0^\pm[u]}Q_\pm u$.
If we used the simpler one, the term of the form
\[ 2{\rm Re}\int_{\R}\p_xJ\Big\{ \big( 2\alpha |u|^2+\beta \cH(|u|^2)\big) Q_\pm \p_xu\Big\} \ol{w}\,dx \]
would remain.
After distributing the first $\p_xJ$ to the following three functions, this term would create the products of $u\cdot \ol{Ju}\cdot \p_x^2u\cdot \p_x\ol{Ju}$ type.
In the $H^2$ energy estimate in Section~\ref{sec:energy}, the corresponding terms are those of $u\cdot \p_x\ol{u} \cdot \p_x^2u\cdot \p_x^2\ol{u}$ types and can be readily estimated by using the optimal decay of $\| u(t)\|_{L^\infty}$ and $\| \p_xu(t)\|_{L^\infty}$ (recall the estimate for $\cS_5^\pm$).
However, in the weighted energy estimate, these terms become problematic due to the absence of decay effect for $\| J(t)u(t)\|_{L^\infty}$.
\end{rem}

The rest of this section is devoted to the proof of Lemma \ref{lem:high-weighted}.
The strategy is similar to the energy estimates in Lemma~\ref{lem:high-energy}, though  new ideas are needed in some parts due to the lack of symmetry caused by the operator $J$.
We also keep considering as if $J$ would commute with $Q_{\pm}$.

\subsection{Proof of \eqref{eq:we-1}}
After distributing $J$, the left-hand side of \eqref{eq:we-1} can be written as a sum of such terms as 
\[ \int_0^t\int_{\R^3}e^{2is\eta\sigma}\xi(\xi-\eta-\sigma)m_1(\xi,\eta,\sigma)\wh{f}(s,\xi-\eta)\ol{\wh{f}(s,\xi-\eta-\sigma)}\wh{f}(s,\xi-\sigma)\ol{\wh{h}(s,\xi)}\,d\eta d\sigma d\xi \,ds,\]
with one of $f$ being replaced by $xf$
, and with 
\[ m_1(\xi,\eta,\sigma)=\frac1{2\pi}\Big[ (\varrho_{>1}\chi_\pm)(\xi)-2(\varrho_{>1}\chi_\pm)(\xi-\sigma)\Big] ,\quad f(t)=e^{-it\p_x^2}u(t),\quad h(t)=e^{-it\p_x^2}w(t).\]
 Then, from the identity
\[ \xi(\xi-\eta-\sigma) = (\xi-\eta)(\xi-\sigma) -\eta\sigma , \] 
it suffices to consider
\begin{gather}
\int_0^t \int_{\R^3}e^{2is\eta\sigma}m_1(\xi,\eta,\sigma)\wh{\p_xf}(s,\xi-\eta)\ol{\wh{f}(s,\xi-\eta-\sigma)}\wh{\p_xf}(s,\xi-\sigma)  \ol{ \wh{h}(s,\xi) } \,d\eta d\sigma d\xi ds , \label{eq:we-1-1}\\
\int_0^t\int_{\R^3}e^{2is\eta\sigma}\eta\sigma m_1(\xi,\eta,\sigma)\wh{f}(s,\xi-\eta)\ol{\wh{f}(s,\xi-\eta-\sigma)}\wh{f}(s,\xi-\sigma)\ol{\wh{h}(s,\xi)}\,d\eta d\sigma d\xi \,ds, \label{eq:we-1-2}
\end{gather}
with one of $f$ being replaced by $xf$.
Then \eqref{eq:we-1-1} can be easily estimated by
\[ \int_0^t\| u(s)\|_{W^{1,\infty}}^2\| xf(s)\|_{H^1}\| w(s)\|_{L^2}\,ds \les \int_0^t \bra{s}^{-1+4\de}\ve_1^4\,ds\les \bra{t}^{4\de}\ve_1^4.\]
To estimate \eqref{eq:we-1-2}, we perform the integration by parts with respect to time and then obtain the following terms:
\begin{align}
&\begin{aligned}
& \int_{\R^3}e^{2is\eta\sigma}m_1(\xi,\eta,\sigma)\wh{f}(s,\xi-\eta)\ol{\wh{f}(t,\xi-\eta-\sigma)}\wh{f}(s,\xi-\sigma)\ol{\wh{h}(s,\xi)}\,d\eta d\sigma d\xi \bigg|_{s=0,t},
\end{aligned} \label{eq:nf-w-1} \\
&\int_0^t\int_{\R^3}e^{2is\eta\sigma}m_1(\xi,\eta,\sigma)\p_s\Big( \wh{f}(s,\xi-\eta)\ol{\wh{f}(s,\xi-\eta-\sigma)}\wh{f}(s,\xi-\sigma)\Big) \ol{\wh{h}(s,\xi)}\,d\eta d\sigma d\xi \,ds, \label{eq:nf-w-2}\\
&\int_0^t\int_{\R^3}e^{2is\eta\sigma}m_1(\xi,\eta,\sigma)\wh{f}(s,\xi-\eta)\ol{\wh{f}(s,\xi-\eta-\sigma)}\wh{f}(s,\xi-\sigma)\ol{\p_s\wh{h}(s,\xi)}\,d\eta d\sigma d\xi \,ds, \label{eq:nf-w-3}
\end{align}
with one of $f$ being replaced by $xf$.
We also estimate \eqref{eq:nf-w-1} as
\[ \max_{s=0,t}\| u(s)\|_{L^\infty}^2\| xf(s)\|_{L^2}\| w(s)\|_{L^2}\les \ve^4+\bra{t}^{-1+4\de}\ve_1^4\les \bra{t}^{4\delta}\ve_1^4.\]
The estimate of \eqref{eq:nf-w-2} can be done similarly, using \eqref{eq:derivative-f} or \eqref{eq:derivative-xf}, by
\begin{align*}
&\int_0^t \Big( \| u(s)\|_{L^\infty}^2\| \p_sxf(s)\|_{L^2}+\| u(s)\|_{L^\infty}\| e^{it\p_x^2}\p_sf(s)\|_{L^\infty}\| xf(s)\|_{L^2}\Big) \| w(s)\|_{L^2} \,ds\\
&\les \int_0^t \Big( \bra{s}^{-2+\zeta +4\de}+\bra{s}^{-\frac32+\zeta+5\de}\Big) \ve_1^6\,ds\les \ve_1^6.
\end{align*}
For \eqref{eq:nf-w-3}, similarly to \eqref{eq:nf-t-3}, we first estimate it by
\[ \int_0^t \left\| \left[ Q_{\pm} (|u|^2u)-2|u|^2Q_{\pm} u\right](s)\right\|_{H^1}\| \p_sh(s)\|_{H^{-1}}\,ds, \]
with one of $u$ being replaced by $Ju$.
On one hand, the first norm is bounded by
\[ \| u(s)\|_{W^{1,\infty}}^2\| J(s)u(s)\|_{H^1}\les \bra{s}^{-1+2\delta}\ve_1^3.\]
On the other hand, we see that
\begin{align*}
e^{it\p_x^2}\p_th(t)&=(\p_t-i\p_x^2)(e^{2\rho_0^\pm[u]}\p_xJQ_{\pm}u) \\
&=\big[ (\p_t-i\p_x^2)e^{2\rho_0^\pm[u]}\big] \p_xJQ_{\pm}u -2i\big[ \p_xe^{2\rho_0^\pm[u]}\big] \p_x^2JQ_{\pm}u+e^{2\rho_0^\pm[u]}Q_{\pm}\p_x e^{it\p_x^2}\p_txf(t),
\end{align*}
up to harmless commutator terms.  
Using the embedding $L^1\hookrightarrow H^{-1}$, the estimate $\| \phi \psi\|_{H^{-1}}\les \| \phi\|_{W^{1,\infty}}\| \psi\|_{H^{-1}}$, Lemma~\ref{lem:rho}, and \eqref{eq:derivative-xf}, we have
\begin{align*}
\| \p_sh(s)\|_{H^{-1}}&\les \| (\p_t-i\p_x^2)e^{2\rho_0^\pm[u]}\|_{L^\infty}\| \p_xJQ_{\pm}u\|_{L^2}+\| \p_xe^{2\rho_0^\pm[u]}\|_{W^{1,\infty}}\| \p_x^2JQ_{\pm}u\|_{H^{-1}}\\
&\quad +\| e^{2\rho_0^\pm[u]}\|_{W^{1,\infty}}\| Q_{\pm}\p_x e^{it\p_x^2}\p_txf\|_{H^{-1}}\\
&\les \bra{s}^{-1+2\de}\ve_1^3+\bra{s}^{-1+\zeta+2\delta}\ve_1^3+\bra{s}^{-1+\zeta +2\delta}\ve_1^3\les \bra{s}^{-1+\zeta +2\delta}\ve_1^3.
\end{align*}
Combining these estimates, we obtain 
\[ |\eqref{eq:nf-w-3}|\les \int_0^t\bra{t}^{-2+\zeta +4\de}\ve_1^6\,ds \les \ve_1^6. \]
This completes the proof of \eqref{eq:we-1}.
\hfill \qed

\subsection{Proof of \eqref{eq:we-2}, when $J$ falls on the second or third $u$}
After distributing $J$, the left-hand side of \eqref{eq:we-2} can be written as a sum of 
\[ \int_0^t\int_{\R^3}e^{2is\eta\sigma}\xi(\xi-\eta-\sigma)m_2(\xi,\eta,\sigma)\wh{f}(s,\xi-\eta)\ol{\wh{f}(s,\xi-\eta-\sigma)}\wh{f}(s,\xi-\sigma)\ol{\wh{h}(s,\xi)}\,d\eta d\sigma d\xi \,ds,\]
with either the second or third $f$ being replaced by $xf$ and 
\[ m_2(\xi,\eta,\sigma)=\frac1{2\pi}\Big[ (\varrho_{>1}\chi_\pm)(\xi)\sgn{\eta}-(\varrho_{>1}\chi_\pm)(\xi-\eta)\sgn{\eta}-(\varrho_{>1}\chi_\pm)(\xi-\sigma)\sgn{\xi-\sigma}\Big] .\]
We make inhomogeneous dyadic decomposition of $|\xi|,|\xi-\eta|,|\xi-\eta-\sigma|,|\xi-\sigma|$ into $N_0,N_1,N_2,N_3\ge 1$, respectively, and estimate 
\begin{align}\label{eq:we-2-freq}
&\left| \int_0^t\int_{\R^3}e^{2is\eta\sigma}\xi(\xi-\eta-\sigma)m_{2,\mathbf{N}}(\xi,\eta,\sigma)\wh{f}(s,\xi-\eta)\ol{\wh{f}(s,\xi-\eta-\sigma)}\wh{f}(s,\xi-\sigma)\ol{\wh{h}(s,\xi)}\,d\eta d\sigma d\xi \,ds\right| 
\end{align}
for each $N_j \geq 1$ ($j=0, 1,2, 3$), with either the second or third $f$ being replaced by $xf$ and $m_{2,\mathbf{N}}$ defined by \eqref{eq:multiplier-hi}.
As before, we consider the following four cases:
\begin{align*}
\mbox{Case A:}&\quad N_{\max}\sim N_{{\rm med}},\\
\mbox{Case B:}&\quad N_0\sim N_1\gg N_2,N_3,\\
\mbox{Case C:}&\quad N_0\sim N_3\gg N_1,N_2,\\
\mbox{Case D:}&\quad N_0\sim N_2\gg N_1,N_3.
\end{align*}

In Case A, we notice that the term of the form $\| u\mathcal{H}(\ol{Ju}u)]\|_{L^2}$ or $\| u\mathcal{H}(\ol{u}Ju)]\|_{L^2}$ can be estimated by $\| Ju\|_{L^2}\| u\|_{L^\infty}^2$.
Then, by \eqref{eq:linear-local}, we estimate
\begin{align*}
|\eqref{eq:we-2-freq}|& \les \int_0^t N_{\max}^2\| P_{N_1}u\|_{L^\infty}\Big( \| P_{N_2}Ju\|_{L^2}\| P_{N_3}u\|_{L^\infty}+\| P_{N_2}u\|_{L^\infty}\| P_{N_3}Ju\|_{L^2}\Big) \| P_{N_0}w\|_{L^2}\,ds\\
&\les N_{\max}^2(N_1N_2N_3)^{-1-\kappa}(N_2^\kappa +N_3^\kappa) \int_0^t \bra{s}^{-1+4\delta}\ve_1^4\,ds \les N_{\max}^{-\kappa}\bra{t}^{4\de}\ve_1^4,
\end{align*}
which is summable in $\mathbf{N}$.
In Case B, since the first two terms of $m_{2,\bf N}(\xi,\eta,\sigma)$ cancel out, we have
\begin{align*}
\left| \eqref{eq:we-2-freq} \right|&\les \int_0^t N_0N_2\| P_{N_1}u\|_{L^\infty}\Big( \| P_{N_2}Ju\|_{L^2}\| P_{N_3}u\|_{L^\infty}+\| P_{N_2}u\|_{L^\infty}\| P_{N_3}Ju\|_{L^2}\Big) \| P_{N_0}w\|_{L^2}\,ds\\
&\les N_1^{-\kappa}\int_0^t \bra{s}^{-1+4\delta}\ve_1^4\,ds \les N_1^{-\kappa}\bra{t}^{4\de}\ve_1^4,
\end{align*}
which is summable.
Similarly, in Case C the first and the third terms of $m_{2}(\xi,\eta,\sigma)$ cancel out, and we have
\begin{align*}
\left| \eqref{eq:we-2-freq} \right|&\les \int_0^t N_0N_2\| P_{N_1}u\|_{L^\infty}\Big( \| P_{N_2}Ju\|_{L^2}\| P_{N_3}u\|_{L^\infty}+\| P_{N_2}u\|_{L^\infty}\| P_{N_3}Ju\|_{L^2}\Big) \| P_{N_0}w\|_{L^2}\,ds\\
&\les N_3^{-\kappa}\int_0^t \bra{s}^{-1+4\delta}\ve_1^4\,ds +N_1^{-1-\kappa}N_2^{-\kappa}\int_0^t\bra{s}^{-1}\ve_1^2\| P_{N_3}Ju\|_{H^1}\| P_{N_0}w\|_{L^2}\,ds.
\end{align*}
The first term  is summable, and so is the second since $N_3\sim N_0$ in Case C and
\[ \sum_{N_3\sim N_0}\| P_{N_3}J(s)u(s)\|_{H^1}\| P_{N_0}w(s)\|_{L^2}\les \|J(s)u(s)\|_{H^1}\| w(s)\|_{L^2}\les \bra{s}^{4\delta}\ve_1^2\]
by the Cauchy-Schwarz inequality.
From each of them, we obtain the bound $\bra{t}^{4\de}\ve_1^4$.
In Case D, we use the time non-resonance as in the estimate of \eqref{eq:we-1}, having the terms analogous to \eqref{eq:we-1-1} and \eqref{eq:nf-w-1}--\eqref{eq:nf-w-3}.
The bound for the case \eqref{eq:we-1-1} is estimated as
\[ \int_0^t \| \p_xP_{N_1}u\|_{L^\infty}\Big( \| P_{N_2}Ju\|_{L^2}\| \p_xP_{N_3}u\|_{L^\infty}+\| P_{N_2}u\|_{L^\infty}\| \p_xP_{N_3}Ju\|_{L^2}\Big) \| P_{N_0}w\|_{L^2}\,ds \les N_2^{-1}\bra{t}^{4\de}\ve_1^4,\]
which is summable.
The term corresponding to \eqref{eq:nf-w-1} is estimated in the same approach as before.
The terms corresponding to \eqref{eq:nf-w-2} and \eqref{eq:nf-w-3} can be handled due to faster decay, and we only mention that the resulting localized bounds are summable, using the relation $N_2\sim N_0$ and Cauchy-Schwarz if $\| \p_tP_{N_2}Ju\|_{L^2}$ or $\| P_{N_2}Ju\|_{H^1}$ appears, while using a negative power of $N_2$ in the other cases.
\hfill \qed

\subsection{Proof of \eqref{eq:we-3}, when $J$ falls on the second or third $u$}
It suffices to consider
\[ \left| \int_{\R^3}e^{2it\eta\sigma}\xi(\xi-\sigma)m_3(\xi,\eta,\sigma)\wh{f}(t,\xi-\eta)\ol{\wh{f}(t,\xi-\eta-\sigma)}\wh{f}(t,\xi-\sigma)\ol{\wh{h}(t,\xi)}\,d\eta d\sigma d\xi \right| \]
with either the second or third $f$ being replaced by $xf$ and 
\[ m_3(\xi,\eta,\sigma)=\frac1{2\pi}\Big[ (\varrho_{>1}\chi_\pm)(\xi)\sgn{\eta}-(\varrho_{>1}\chi_\pm)(\xi-\sigma)\sgn{\xi-\sigma}\Big] .\]
As before, we localize each function and consider the estimate of
\[ \left| \int_{\R^3}e^{2it\eta\sigma}\xi(\xi-\sigma)m_{3,\mathbf{N}}(\xi,\eta,\sigma)\wh{f}(t,\xi-\eta)\ol{\wh{f}(t,\xi-\eta-\sigma)}\wh{f}(t,\xi-\sigma)\ol{\wh{g}(t,\xi)}\,d\eta d\sigma d\xi \right|, \]
and make the same case separation into A, B, C, and D, with Case C vanishing.
The proofs in Cases A and B are the same as the corresponding cases for \eqref{eq:we-2-freq}.
In Case D, the bound is 
\begin{align*}
&N_2N_3\| P_{N_1}u\|_{L^\infty}\Big( \| P_{N_2}Ju\|_{L^2}\| P_{N_3}u\|_{L^\infty}+\| P_{N_2}u\|_{L^\infty}\| P_{N_3}Ju\|_{L^2}\Big) \| P_{N_0}w\|_{L^2}\\
&\les N_1^{-1-\kappa}N_3^{-\kappa}\bra{t}^{-1}\ve_1^2\| P_{N_2}Ju\|_{H^1}\| P_{N_0}w\|_{L^2}+N_2^{-\kappa}\bra{t}^{-1+4\de}\ve_1^4,
\end{align*}
which is summable in $\mathbf{N}$ using the relation $N_2\sim N_0\sim N_{\max}$ and yields $\bra{t}^{-1+4\de}\ve_1^4$.
\hfill \qed

\subsection{Proof of \eqref{eq:we-2} and \eqref{eq:we-3}, when $J$ falls on the first $u$}
To deal with this case, we rearrange these terms in \eqref{eq:we-2}--\eqref{eq:we-3} as
\begin{align*}
&\p_x\Big\{ Q_{\pm} \big[ (Ju)\mathcal{H}(\p_x\ol{u}\,u)\big] - (Q_{\pm} Ju)\mathcal{H}(\p_x\ol{u}\,u) -(Ju)\p_x\ol{u}\,Q_{\pm}\mathcal{H}u \Big\} \\
&\hspace{5cm} + \p_x\Big\{ Q_{\pm} \big[ (Ju)\mathcal{H}(\ol{u}\p_xu)\big]  -(Ju)\ol{u}Q_{\pm}\mathcal{H} \p_xu\Big\} \\
&=\p_xQ_{\pm}\big[ (Ju)\p_x\cH(|u|^2)\big] -\p_x\big[ (Ju)\p_x( \ol{u}Q_{\pm}\cH u\big) \big] - (Q_{\pm} Ju)\p_x \mathcal{H}(\p_x\ol{u}\,u) -(\p_x JQ_{\pm}u)\mathcal{H}(\p_x\ol{u}\,u).
\end{align*}
It suffices to prove 
\begin{gather}
\left\| (Ju)\p_x\cH(|u|^2)\right\|_{H^1}+\left\| (Ju)\p_x( \ol{u}Q_{\pm}\cH u\big) \right\|_{H^1}+\left\| (Q_{\pm} Ju)\p_x \mathcal{H}(\p_x\ol{u}\,u)\right\|_{L^2}\les \bra{t}^{-1+2\de}\ve_1^3, \label{eq:we-23-1}\\
\left| 2{\rm Re}\int_{\R}(\p_x JQ_{\pm}u)\mathcal{H}(\p_x\ol{u}\,u)\ol{w}\,dx\right| \les \bra{t}^{-1+4\de}\ve_1^4, \label{eq:we-23-2}
\end{gather}
which verify \eqref{eq:we-2} and \eqref{eq:we-3} in this case.

For \eqref{eq:we-23-1}, we exploit the structure of $\p_x(u\ol{v})$ by Lemma~\ref{lem:esti-null}.
Indeed, the second term can be estimated as
\begin{align*}
\| (Ju)\p_x(\ol{u}Q_{\pm}\cH u)\|_{H^1}&\les \| Ju\|_{H^1}\frac{1}{t}\| \ol{u}JQ_{\pm}\cH u-\ol{Ju}Q_{\pm}\cH u\|_{H^1}\\
&\les \frac{1}{t}\| Ju\|_{H^1}\Big( \| u\|_{W^{1,\infty}}\| JQ_{\pm}\cH u\|_{H^1}+\| Ju\|_{H^1}\| Q_{\pm}\cH u\|_{W^{1,\infty}}\Big) \\
&\les t^{-1}\bra{t}^{-\frac12+4\de}\ve_1^3
\end{align*}
for $t\geq 1$ and
\[ \| (Ju)\p_x(\ol{u}Q_{\pm}\cH u)\|_{H^1}\les \| Ju\|_{H^1}\| u\|_{H^2}^2\les \ve_1^3\]
for $0\leq t\leq 1$.
The other terms in \eqref{eq:we-23-1} can be estimated similarly.
For \eqref{eq:we-23-2} we can derive the same structure by taking the real part, as 
\begin{align*}
\text{LHS of \eqref{eq:we-23-2}}&=\left| 2{\rm Re}\int_{\R}\cH(\p_x\ol{u}\,u)e^{2\rho_0^\pm[u]}|\p_xJQ_{\pm}u|^2\,dx \right| \\
&=\left| \int_{\R}\p_x\cH(|u|^2)e^{2\rho_0^\pm[u]}|\p_xJQ_{\pm}u|^2\,dx \right| \\
&\les \| \p_x\cH(|u|^2)\|_{L^\infty}\| \p_xJQ_{\pm}u\|_{L^2}^2\les \bra{t}^{-\frac32+6\de}\ve_1^4.
\end{align*}
This completes the proof of \eqref{eq:we-2} and \eqref{eq:we-3}.
\hfill \qed

\subsection{Proof of \eqref{eq:we-4}}
We only consider the term $\cH(|u|^2)$ for simplicity.
First, note that
\begin{align}
&\p_xJ\Big( Q_{\pm}\big( \cH(|u|^2)\p_xu\big) -\cH(|u|^2)Q_{\pm}\p_xu\Big) \notag \\
&=\p_x[J,Q_{\pm}]\big( \cH(|u|^2)\p_xu\big) -\p_x\big( \cH(|u|^2)[J,Q_{\pm}]\p_xu\big) \label{eq:we-4-1} \\
&\quad +\p_x[Q_{\pm},\cH((Ju)\ol{u}-u(\ol{Ju}))]\p_xu \label{eq:we-4-2} \\
&\quad +\p_x[Q_{\pm},\cH(|u|^2)]J\p_xu. \label{eq:we-4-3}
\end{align}
Recalling that $[J,Q_{\pm}]$ is a bounded Fourier multiplier, we have, by Lemma \ref{lem:HLinfty},
\[ \| \eqref{eq:we-4-1}\|_{L^2}\les \| \p_x\cH(|u|^2)\|_{L^2}\| \p_xu\|_{L^\infty}+\| \cH(|u|^2)\|_{L^\infty}\| \p_x^2u\|_{L^2}\les \bra{t}^{-1+\zeta+\de}\ve_1^3.\]
For \eqref{eq:we-4-2}--\eqref{eq:we-4-3}, we utilize the commutator estimate in Lemma~\ref{lem:commutator}.
Then, we estimate \eqref{eq:we-4-2} (using the first bound in \eqref{eq:comm-all}) after applying inhomogeneous decomposition on each function, as follows:
\begin{align*}
\| P_{N_0}\eqref{eq:we-4-2}\|_{L^2}&\les \sum_{N_1,N_2,N_3\geq 1}N_0N_3\| \cH((P_{N_1}Ju)\ol{P_{N_2}u})\|_{L^2}\| P_{N_3}u\|_{L^\infty} \\
&\les \sum_{N_1,N_2,N_3\geq 1}N_0N_3N_1^{-1}\| P_{N_1}Ju\|_{H^1}N_2^{-1-\kappa}N_3^{-1-\kappa}\bra{t}^{-1}\ve_1^2.
\end{align*}
Since the commutator vanishes if $N_3\gg N_1,N_2$, we may assume $N_0\les \max\{ N_1,N_2\}$. 
Then this bound can be summed up to give $\bra{t}^{-1+2\de}\ve_1^3$ when $N_1\les N_2$ or $N_2\ll N_1\les N_3$.
If $N_1\gg \max\{ N_2,N_3\}$, we can use the relation $N_1\sim N_0$ which allows us to have the same bound by summing up in $\ell^1_{N_1,N_2,N_3}$ and then in $\ell^2_{N_0}$.
Finally, \eqref{eq:we-4-3} is estimated similarly to Lemma~\ref{lem:commutator} (but in a slightly different manner), as
\[ \| \eqref{eq:we-4-3}\|_{L^2}\les \| \p_x\cH(|u|^2)\|_{H^1}\| J\p_xu\|_{L^2}\les \bra{t}^{-\frac32+4\de}\ve_1^3.\]

This finishes the proof of Lemma~\ref{lem:high-weighted}, and therefore, that of Proposition~\ref{prop:weighted}.
\hfill \qed

\section{Proof of asymptotic behavior}\label{sec:asymptotic}

In this section, we devote ourselves to proving Proposition \ref{prop:linfty} and the asymptotic behavior of the solution to \eqref{kdnls}. To this end, we begin with determining the exact phase correction for \eqref{kdnls}. Taking Fourier transform, the equation for $\wh{f}(t,\xi)$ becomes 
\begin{align*}
\partial_t\wh{f}(t,\xi) &=\frac{i\alpha}{2\pi}\xi \int_{\mathbb{R}^2}e^{2it\eta \sigma}\wh{f}(t,\xi-\eta)\ol{\wh{f}(\xi-\eta-\sigma)}\wh{f}(\xi-\sigma )\,d\eta d\sigma \\
&\quad +\frac{\beta}{2\pi}\xi \int_{\mathbb{R}^2}e^{2it\eta \sigma}\sgn{\eta}\wh{f}(t,\xi-\eta)\ol{\wh{f}(\xi-\eta-\sigma)}\wh{f}(\xi-\sigma )\,d\eta d\sigma .
\end{align*}
On the one hand, by the formula 
\[ \mathcal{F}_\eta [1] (y)=\sqrt{2\pi}\delta (y),\quad \mathcal{F}_\eta [\sgn{\eta}](y)=-i\sqrt{\frac{2}{\pi}} p.v.\frac{1}{y} ,\]
we see that
\begin{align*}
 \mathcal{F}_{\eta ,\sigma}[e^{2it\eta \sigma} ](y,z) &=\mathcal{F}_\sigma \left[ \sqrt{2\pi} \delta (y-2t\sigma )\right] (z) =\mathcal{F}^{-1}_\sigma \left[ \frac{\sqrt{2\pi}}{2t}\delta \Big(\sigma -\frac{y}{2t}\Big) \right] (-z) =\frac{1}{2t}e^{-i\frac{yz}{2t}}, \\
\cF_{\eta,\sigma}\left[ e^{2it\eta \sigma} \sgn{\eta} \right](y,z) &= \cF_\sigma \left[ -i\sqrt{\frac{2}{\pi}} p.v.\frac{1}{y-2t\sigma} \right] (z) = -\frac{1}{2t} \cF_\sigma^{-1} \left[ -i\sqrt{\frac{2}{\pi}} p.v.\frac{1}{\sigma -\frac{y}{2t}} \right] (-z)\\
&=-\frac{1}{2t} e^{i(-z)\frac{y}{2t}}\sgn{-z} =\frac{1}{2t}e^{-i\frac{yz}{2t}}\sgn{z}.
\end{align*}
On the other hand, for general functions $f_1,f_2,f_3$ we use the inverse transform to obtain
\begin{align*}
\mathcal F_\eta^{-1} \left[ \wh{f_1}(\xi-\eta) \ol{\wh{f_2}(\xi-\eta-\sigma)}\right] (y) &= \frac{1}{\sqrt{2\pi}} \int_{\R} e^{iy\eta}\wh{f_1}(\xi -\eta )\frac{1}{\sqrt{2\pi}}\int_{\R} e^{ix(\xi -\eta -\sigma )}\ol{f_2(x)}\,dx \,d\eta \\
	&= \frac{1}{\sqrt{2\pi}}e^{iy\xi}\int_{\R}  e^{-ix\sigma} f_1(x-y)\ol{f_2(x)}dx,
\end{align*}
which yields that
\begin{align*}
	&\mathcal F_{\eta,\sigma}^{-1} \left[\wh{f_1}(s,\xi-\eta) \ol{\wh{f_2}(s,\xi-\eta-\sigma)}\wh{f_3}(s,\xi-\sigma)\right] (y,z)\\
	&=\frac{1}{\sqrt{2\pi}}e^{iy\xi} \int_{\R} f_1(x-y)\ol{f_2(x)} \mathcal F_{\sigma}^{-1} \left[  e^{-ix\sigma}  \wh{f_3}(\xi-\sigma) \right] (z)\,dx\\
	&=\frac{1}{\sqrt{2\pi}}e^{iy\xi} \int_{\R} f_1(x-y)\ol{f_2(x)} e^{i(z-x)\xi} f_3(x-z)\,dx\\
	&=\frac{1}{\sqrt{2\pi}}e^{i(y+z)\xi} \int_{\R} e^{-ix\xi}f_1(x-y)\ol{f_2(x)} f_3(x-z)\,dx .
\end{align*}
From these formulas and Plancherel's theorem, we see that 
\begin{align*}
&\frac{i\alpha}{2\pi}\xi \int_{\R^2} e^{2it\eta \sigma}\wh{f_1}(t,\xi-\eta)\ol{\wh{f_2}(\xi-\eta-\sigma)}\wh{f_3}(\xi-\sigma )\,d\eta d\sigma \\
&= \frac{i\alpha}{2\pi}\xi \int_{\R^2} \mathcal{F}_{\eta,\sigma}\left[ e^{2it\eta \sigma}\right] (y,z) \mathcal{F}^{-1}_{\eta,\sigma}\left[ \wh{f_1}(t,\xi-\eta)\ol{\wh{f_2}(\xi-\eta-\sigma)}\wh{f_3}(\xi-\sigma )\right] (y,z) \,dy dz \\
&=\frac{i\alpha}{2t}(2\pi)^{-\frac32}\xi \int_{\R^3} e^{-i\frac{yz}{2t}}e^{-i\xi (x-y-z)}f_1(x-y)\ol{f_2(x)} f_3(x-z)\,dx dy dz,
\end{align*}
and similarly,
\begin{align*}
&\frac{\beta}{2\pi}\xi \int_{\R^2} e^{2it\eta \sigma}\sgn{\eta}\wh{f_1}(t,\xi-\eta)\ol{\wh{f_2}(\xi-\eta-\sigma)}\wh{f_3}(\xi-\sigma )\,d\eta d\sigma \\
&=\frac{\beta}{2t}(2\pi)^{-\frac32} \xi \int_{\R^3} e^{-i\frac{yz}{2t}}\sgn{z} e^{-i\xi (x-y-z)}f_1(x-y)\ol{f_2(x)} f_3(x-z)\,dx dy dz.
\end{align*}
In principle, the resonance parts of these terms are given by dropping the oscillating factor $e^{-i\frac{yz}{2t}}$, as
\begin{align}\label{eq:correction-alpha}
\begin{aligned}
		&\frac{i\alpha}{2t}(2\pi)^{-\frac32}\xi \int_{\R^3} e^{-i\xi (x-y-z)}f_1(x-y)\ol{f_2(x)} f_3(x-z)\,dx dy dz \\
&=\frac{i\alpha}{2t}(2\pi)^{-\frac32}\xi \int_{\R^3} e^{-i\xi (y+z-x)}f_1(y)\ol{f_2(x)} f_3(z)\,dx dy dz \\
&=\frac{i\alpha}{2t}\xi \wh{f_1}(\xi)\ol{\wh{f_2}(\xi)}\wh{f_3}(\xi ),
\end{aligned}
\end{align}
and similarly,
\begin{align*}
	&\frac{\beta}{2t}(2\pi)^{-\frac32} \xi \int_{\R^3} \sgn{z} e^{-i\xi (x-y-z)}f_1(x-y)\ol{f_2(x)} f_3(x-z)\,dx dy dz\\
&=\frac{i\beta}{2t}\xi \frac{1}{\sqrt{2\pi}}\int_{\R} e^{-i\xi y}f_1(y)\,dy \frac{1}{2\pi}\int_{\R^2} e^{i\xi (x-z)}(-i\sgn{x-z}) \ol{f_2(x)} f_3(z)\,dx dz \\
&=\frac{i\beta}{2t}\xi \wh{f_1}(\xi) \mathcal{H} \left[ \ol{\wh{f_2}}\wh{f_3}\right] (\xi ).
\end{align*}
We thus obtain the resonance part of $\partial_t\wh{f}(t,\xi)$ as
\[ 
i\Big( \frac{\alpha}{2t}\xi |\wh{f}(t, \xi)|^2 + \frac{\beta}{2t}\xi \mathcal{H} \left[ |\wh{f}(t)|^2\right] (\xi )\Big) \wh{f}(t,\xi). \]

Then Proposition \ref{prop:linfty} follows from the following lemma.
\begin{lemma}\label{lem:asymptotic}
	Assume that $u \in C([0,T], H^{2}\cap H^{1,1})$ is a solution to \eqref{kdnls} satisfying the a priori assumption \eqref{eq:assumption-apriori}.
	Then, for $0\leq t\leq T$ we have
	\begin{align}\label{eq:linfty-apriori}
		\left\|\bra{\xi}\wh{f}(t,\xi)\right\|_{L_\xi^\infty}  \les \ve_1.
	\end{align}
	Moreover, we have, for $1\leq t_1 \leq t_2\leq T$,
	\begin{align}\label{eq:asymptotic}
		\left\|\bra{\xi}\left(e^{-iB(t_2,\xi)}\wh{f}(t_2,\xi) - e^{-iB(t_1,\xi)}\wh{f}(t_1,\xi) \right) \right \|_{L_\xi^\infty}  \les t_1^{-\de}\ve_1^3,
	\end{align}
	where $B(t,\xi)$ is the real valued function defined by 
	\begin{align*}
		B(t,\xi) &:= B_1(t,\xi) +  B_2(t,\xi),
	\end{align*}
	where
	\begin{align*}
		B_1(t,\xi)&:= \al \int_1^t \frac1{2s} \xi  \left|\wh{f}(s,\xi)\right|^2 ds,	\;\;\mbox{ and }\;\;	B_2(t,\xi):= \beta \int_1^t \frac1{2s} \xi \mathcal{H}\Big( \left|\wh{f}(s,\cdot )\right|^2\Big) (\xi) ds.
	\end{align*}
	Furthermore, there exists $\kappa>0$ such that for any dyadic $N\geq 1$ and $0\leq t\leq T$ we have
	\begin{equation}\label{eq:linfty-local2}
		\left\| \varrho_N(\xi) \bra{\xi}\wh{f}(t,\xi)\right\|_{L_\xi^\infty}  \les N^{-\kappa}\ve_1.
	\end{equation}
\end{lemma}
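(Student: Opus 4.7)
\smallskip
\noindent\textbf{Proof proposal.}
My plan is to exploit the explicit formula for $\p_t\wh{f}(t,\xi)$ just derived, splitting it into a resonant piece that produces exactly the phase correction $B(t,\xi)$ and a non-resonant remainder. Define the modified profile
\[
a(t,\xi):=e^{-iB(t,\xi)}\wh{f}(t,\xi), \qquad t\geq 1.
\]
Since $B$ is real, $|a(t,\xi)|=|\wh{f}(t,\xi)|$, so all three claims are equivalent to the corresponding bounds on $a$. By direct computation,
\[
\p_ta(t,\xi)=e^{-iB(t,\xi)}\Big(\p_t\wh{f}(t,\xi) -i\big(\p_tB(t,\xi)\big)\wh{f}(t,\xi)\Big),
\]
and the definition of $B_1+B_2$ shows that $i(\p_tB)\wh{f}$ cancels precisely the two ``diagonal'' contributions \eqref{eq:correction-alpha} and its $\beta$-analogue. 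Thus $\p_ta$ equals $e^{-iB}$ times the non-resonant remainder of $\p_t\wh{f}$. Once I control this remainder in $\bra{\xi}L^\infty_\xi$ by $\bra{t}^{-1-\delta}\ve_1^3$, integration over $[t_1,t_2]$ yields \eqref{eq:asymptotic}; combining with the initial bound $\|\bra{\xi}\wh{f}(1)\|_{L^\infty_\xi}\les \|u(1)\|_{L^1\cap L^{1,1}}\les \ve_1$ (from the Fourier inversion and the $H^{2}\cap H^{1,1}$ hypothesis at time $\les 1$) gives \eqref{eq:linfty-apriori}.

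The main work is to show
\[
\bra{\xi}\Big\| \p_t\wh{f}(t,\xi)-i\big(\p_tB(t,\xi)\big)\wh{f}(t,\xi)\Big\|_{L^\infty_\xi}\les \bra{t}^{-1-\delta}\ve_1^3.
\]
Using the Fourier-side expressions above, the remainder can be written as
\[
\frac{1}{t}\xi \int\!\!\int \big(e^{-iyz/2t}-1\big)\,K(x,y,z;\xi)\,f(x-y)\ol{f(x)}f(x-z)\,dx\,dy\,dz
\]
with an explicit kernel $K$ (involving either the constant or the sign of $\xi-z$ for the $\beta$-part). I will insert $|e^{-iyz/2t}-1|\les |yz/t|^\theta$ with a small $\theta\in(0,1)$. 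The factor $|y|^\theta$ corresponds, after conjugating by Fourier, to a fractional-weight on $f$, controlled by interpolation between $\|f\|_{L^2}$ and $\|xf\|_{L^2}$; similarly for $|z|^\theta$. Using the weighted a priori bound $\|xf(t)\|_{H^1}\les \bra{t}^{2\delta}\ve_1$, the dispersive bound $\|u(t)\|_{L^\infty}\les \bra{t}^{-1/2}\ve_1$ from \eqref{eq:linear}, and Hölder, I expect to obtain
\[
\bra{\xi}\Big|\text{non-resonant part}\Big|\les \bra{\xi}\frac{|\xi|}{t}\cdot t^{-\theta}\cdot (\bra{t}^{2\delta\theta}\ve_1^2)\cdot \bra{t}^{-1/2+2\delta}\ve_1 \les \bra{t}^{-1-\theta+4\delta}\ve_1^3,
\]
where the loss $\bra{\xi}|\xi|$ is absorbed either by the $L^2$-weight coming from $\|xf\|_{H^1}$ or by interpolation with $\|u\|_{H^2}\les \bra{t}^{\delta}\ve_1$. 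Choosing $\theta$ sufficiently larger than $4\delta$ gives a net $\bra{t}^{-1-\delta}$ decay. The main obstacle is keeping track of the trilinear distribution of derivatives and weights so that no single factor eats all the $\|xf\|_{H^1}$; the natural move is a paraproduct/dyadic decomposition of the three inputs, treating the highest-frequency one in $L^2$ (with $H^1$ or $H^2$ norm) and the other two in $L^\infty$ via \eqref{eq:linear}.

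For the frequency-localized estimate \eqref{eq:linfty-local2}, I run the same argument with $\wh{f}$ replaced by $\varrho_N\wh{f}$. The outer frequency $\xi\sim N$ forces at least one of the three inputs to have frequency $\gtrsim N$ (up to lower-order commutators with $\varrho_N$), so one of the $W^{1,\infty}$-norm factors in the trilinear estimate can be upgraded to the localized version \eqref{eq:linear-local}, which provides the gain $N^{-\kappa}$. The commutator terms are lower order because $\varrho_N$ is smooth and $|\p_\xi\varrho_N|\les N^{-1}\mathbf{1}_{|\xi|\sim N}$, which pairs with the integration-by-parts gain. Putting this together, the bootstrap closes and gives \eqref{eq:linfty-local2} with the same $\kappa$ for every $N\geq 1$.
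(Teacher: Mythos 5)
Your overall strategy (subtract the resonant phase, estimate the non-resonant remainder in $\bra{\xi}L^\infty_\xi$ by $\bra{t}^{-1-\delta}\varepsilon_1^3$, then integrate) is the same as the paper's, but there are several genuine gaps.

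\emph{Circularity.} You invoke the sharp dispersive decay $\|u(t)\|_{W^{1,\infty}}\les \bra{t}^{-1/2}\varepsilon_1$ from \eqref{eq:linear} and its frequency-localized version \eqref{eq:linear-local}, but both of these are stated as \emph{consequences} of Proposition~\ref{prop:linfty}, which is what this lemma is proving. The paper explicitly flags this point: it uses only the \emph{non-sharp} decay $\|u(t)\|_{W^{1,\infty}}\lesssim \bra{t}^{-1/2+2\delta}\varepsilon_1$, which follows directly from Lemma~\ref{lem:linear} and the a priori assumption \eqref{eq:assumption-apriori} (see the Remark after Lemma~\ref{lem:asymptotic}). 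Likewise, your argument for \eqref{eq:linfty-local2}, which ``upgrades'' one $W^{1,\infty}$ factor to \eqref{eq:linear-local}, is circular; the paper instead derives the differential inequality $\partial_t(\varrho_N\bra{\xi}|\wh f|)^2 = 2\mathrm{Re}[\cdots]$ (using only the $R_j$ bounds plus an interpolation bound $\|\varrho_N\bra{\xi}\wh f\|_{L^\infty_\xi}\lesssim N^{-1/3}\bra{t}^{5\delta/3}\varepsilon_1$ from the weighted a priori norms) and integrates in $t$.

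\emph{Handling of the weight $\bra{\xi}$.} You state that the ``loss $\bra{\xi}|\xi|$ is absorbed'' but do not say how, and this is precisely the hard part of \eqref{eq:asymptotic1}. Inserting $|e^{-iyz/2t}-1|\lesssim |yz/t|^\theta$ and then integrating by parts twice in $x$ to cope with $\xi^2$ would require $\partial_x^2 f$ in an $L^1$-like norm, which the a priori bounds do not provide. The paper's resolution is the algebraic decomposition $\xi^2 = (\xi^2-(\xi-\eta)^2) + (\xi-\eta)\sigma + (\xi-\eta)(\xi-\eta-\sigma)$: the first two pieces are linear in $\eta$ resp.\ $\sigma$ and gain $t^{-1}$ by integration by parts in the dual frequency variable, while only the third piece is treated via the physical-space $|yz/t|^\zeta$ device, with the two derivatives safely placed on two different copies of $f$. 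Without this decomposition your trilinear estimate does not close.

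\emph{The $\beta$-part is not addressed.} The non-local piece $R_2$ has the multiplier $\mathrm{sgn}(\eta)$, which is discontinuous at $\eta=0$. Integrating by parts in $\eta$ (for the $\wt{R}_{2,2}$-type piece) produces a boundary/delta contribution $\propto t^{-1}\xi\wh{f}(t,\xi)\int_{\R}|\wh f(t,\sigma)|^2 d\sigma$, and the $\wt{R}_{2,3}$-type piece produces a matching term via the commutator identity $[x,\mathcal H]g=\frac{1}{\pi}\int g$. These two unbounded terms cancel exactly, leaving the correct resonant contribution $\frac{i\beta}{2t}\xi^2 \mathcal H(|\wh f|^2)(\xi)\wh f(t,\xi)$. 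Your sketch, by treating the sign merely as part of an ``explicit kernel $K$'', does not see this cancellation, and the individual pieces are not separately bounded.

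In short, the high-level plan is right, but you would need (i) to replace the sharp decay estimates by the non-sharp a priori ones to avoid circularity, (ii) the algebraic identity for $\xi^2$ with a case-by-case treatment, and (iii) the cancellation mechanism for the $\mathrm{sgn}$-multiplier / $[x,\mathcal H]$ commutator in the $\beta$-part. The paper's proof supplies all three.
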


\begin{rem}
By Lemma~\ref{lem:linear} and the a priori assumption \eqref{eq:assumption-apriori}, we have 
\begin{gather}\label{eq:linear'}
\begin{aligned}
\| u(t)\|_{W^{1,\infty}}&\lesssim \bra{t}^{-\frac12}\| \bra{\xi}\wh{f}(t)\|_{L^\infty_\xi}+\bra{t}^{-\frac12-\gamma}\| f(t)\|_{H^{1,1}}\qquad (0<\gamma <\tfrac14) \\
&\lesssim \bra{t}^{-\frac12}\| f(t)\|_{H^{1,1}}\les \bra{t}^{-\frac12+2\delta}\ve_1.
\end{aligned}
\end{gather}
We only use this non-sharp $L^\infty$ decay in the proof of Lemma \ref{lem:asymptotic}. 
Note that the sharp decay \eqref{eq:linear} is a consequence of Lemma~\ref{lem:asymptotic}.
\end{rem}

\begin{proof}[Proof of Lemma \ref{lem:asymptotic}]
Let us define
\begin{align*}
R_1(t,\xi )&:=\frac{i\alpha}{2\pi}\xi \int_{\R^2} e^{2it\eta \sigma}\wh{f}(t,\xi-\eta)\ol{\wh{f}(\xi-\eta-\sigma)}\wh{f}(\xi-\sigma )\,d\eta d\sigma - \frac{i\alpha}{2t}\xi |\wh{f}(t, \xi)|^2\wh{f}(t,\xi ) ,\\
R_2(t,\xi )&:=\frac{\beta}{2\pi}\xi \int_{\R^2} e^{2it\eta \sigma}\sgn{\eta}\wh{f}(t,\xi-\eta)\ol{\wh{f}(\xi-\eta-\sigma)}\wh{f}(\xi-\sigma )\,d\eta d\sigma -\frac{i\beta}{2t}\xi \mathcal{H} \left[ |\wh{f}(t)|^2\right] (\xi )\wh{f}(t,\xi).
\end{align*}
Then we have
 \begin{align}\label{eq:time-deri-f}
 	\partial_t\wh{f}(t,\xi )=i\partial_tB(t,\xi )\wh{f}(t,\xi )+R_1(t,\xi )+R_2(t,\xi),
 \end{align}
which implies 
\[
 e^{-iB(t_2,\xi)}\wh{f}(t_2,\xi ) -e^{-iB(t_1,\xi )}\wh{f}(t_1,\xi )=\int_{t_1}^{t_2}e^{-iB(s,\xi )}\Big\{ R_1(s,\xi)+R_2(s,\xi )\Big\} \,ds 
\]
for any $1\leq t_1\leq t_2\leq T$.
In particular, to prove \eqref{eq:linfty-apriori} and \eqref{eq:asymptotic}, it suffices to show
\begin{align}
\| R_1(t,\xi )\|_{L^\infty_\xi}+\| R_2(t,\xi)\|_{L^\infty_\xi}&\lesssim t^{-1-\de}\ve_1^3,\label{eq:asymptotic0} \\
\| \xi R_1(t,\xi )\|_{L^\infty_\xi}+\| \xi R_2(t,\xi)\|_{L^\infty_\xi}&\lesssim t^{-1-\de}\ve_1^3 \label{eq:asymptotic1}
\end{align}
for $t\geq 1$.
Note that for $0\leq t\leq 1$ we have $\| \bra{\xi}\wh{f}(t,\xi )\|_{L^\infty_\xi}\les \| f(t)\|_{H^{1,1}}\lesssim \ve_1$ by the assumption \eqref{eq:assumption-apriori}.

We begin with the proof of \eqref{eq:asymptotic0}. From the calculation \eqref{eq:correction-alpha} and integration by parts in $x$, we see that
\begin{align*}
&|R_1(t,\xi )|\les t^{-1}\left| \int_{\R^3} \Big( e^{-i\frac{yz}{2t}}-1\Big) \xi e^{-i\xi (x-y-z)}f(t,x-y)\ol{f(t,x)} f(t,x-z)\,dx dy dz\right| \\
&\les t^{-1} \left( \left| \int_{\R^3} \Big( e^{-i\frac{yz}{2t}}-1\Big) e^{-i\xi (x-y-z)}(\partial_xf)(t,x-y)\ol{f(t,x)} f(t,x-z)\,dx dy dz\right| + (\text{symm.~terms})\right) \\
&\les t^{-1-\zeta}\left( \int_{\R^3} |yz|^{\zeta} \left| (\partial_xf)(t,x-y)\ol{f(t,x)} f(t,x-z)\right| \,dx dy dz+ (\text{symm.~terms})\right) \\
&\les t^{-1-\zeta} \| \bra{x}^{2\zeta}\partial_xf(t)\|_{L^1}\| \bra{x}^{2\zeta}f(t)\|_{L^1}\| f(t)\|_{L^1} \\
&\les t^{-1-\zeta}\| f(t)\|_{H^{1,1}}^3 \les t^{-1-\zeta+6\delta}\ve_1^3.
\end{align*}
By setting $\zeta =7\delta \in (0,\frac14)$, we obtain the desired bound. The estimate for $R_2(t,\xi )$ can be obtained in the same way from the fact that the sign function $\sgn{z}$ does not affect the integration by parts in $x$.

To prove \eqref{eq:asymptotic1},  we need refined estimates.
Let us consider 
\begin{align*}
\wt R_1(t,\xi )&:=\xi \Big( R_1(t,\xi )+\frac{i\alpha}{2t}\xi |\wh{f}(t, \xi)|^2\wh{f}(t,\xi ) \Big) \\
&\;=\frac{i\alpha}{2\pi}\int_{\R^2} e^{2it\eta \sigma}\xi^{2}\wh{f}(t,\xi-\eta)\ol{\wh{f}(t,\xi-\eta-\sigma)}\wh{f}(t,\xi-\sigma )\,d\eta d\sigma .
\end{align*}
Decomposing the multiplier $\xi^{2}$ as
\begin{align}\label{eq:algebraic}
	 \xi^{2} = (\xi^{2}-(\xi-\eta)^{2})+(\xi-\eta)\sigma +(\xi-\eta)(\xi -\eta -\sigma) ,
\end{align}
we denote the corresponding parts by $\wt{R}_{1,1}$, $\wt{R}_{1,2}$ and $\wt{R}_{1,3}$, instead of $\xi^2$ in $\wt R_1$, respectively.
The first part $\wt{R}_{1,1}$ consists of the terms of the form
\[ \int_{\R^2} e^{2it\eta \sigma}\eta \wh{\partial_x^{k_1}f}(t,\xi-\eta)\ol{\wh{\partial_x^{k_2}f}(t,\xi-\eta-\sigma)}\wh{\partial_x^{k_3}f}(t,\xi-\sigma )\,d\eta d\sigma,\quad 0\leq k_j\leq 1,~k_1+k_2+k_3=1.\]
By integration by parts in $\sigma$, \eqref{eq:linear'} and the assumption \eqref{eq:assumption-apriori}, we obtain an extra decay $t^{-1}$ and 
\begin{align*}
\| \wt{R}_{1,1}(t,\xi )\|_{L^\infty_\xi}&\les t^{-1}\Big( \| \partial_x^{k_1}u(t)\ol{e^{it\partial_x^2}x\partial_x^{k_2}f(t)}\partial_x^{k_3}u(t)\|_{L^1_x}+\| \partial_x^{k_1}u(t)\ol{\partial_x^{k_2}u(t)}e^{it\partial_x^2}x\partial_x^{k_3}f(t)\|_{L^1_x}\Big) \\
&\les t^{-1}\| u(t)\|_{W^{1,\infty}} \| u(t)\|_{H^1}\| f(t)\| _{H^{1,1}} \\
&\les t^{-\frac32+5\de}\ve_1^3.
\end{align*}
The estimate of $\wt{R}_{1,2}$ is parallel, for which we integrate by parts in $\eta$.
For $\wt{R}_{1,3}$, we set $f_1(t):=-i\partial_xf(t)$, $f_2(t):=-i\partial_xf(t)$, $f_3(t):=f(t)$ and repeat the previous calculation to figure out the main part,
\begin{align*}
\wt{R}_{1,3}(t,\xi )&=\frac{i\alpha}{2t}(2\pi)^{-\frac32}\int_{\R^3} e^{-i\frac{yz}{2t}}e^{-i\xi (x-y-z)}f_1(t,x-y)\ol{f_2(t,x)} f_3(t,x-z)\,dx dy dz\\
&=\frac{i\alpha}{2t}(2\pi)^{-\frac32}\int_{\R^3} \Big( e^{-i\frac{yz}{2t}}-1\Big) e^{-i\xi (x-y-z)}f_1(t,x-y)\ol{f_2(t,x)} f_3(t,x-z)\,dx dy dz\\
&\qquad +\frac{i\alpha}{2t}\xi^2 \left|\wh{f}(t,\xi)\right|^2\wh{f}(t,\xi ).
\end{align*}
The second term on the right-hand side is equal to $\wt{R}_1(t,\xi )-\xi R_1(t,\xi )$, while the first term can be estimated in the same way as for \eqref{eq:asymptotic0}, by
\[ t^{-1-\zeta}\Big( \| |x|^{2\zeta}\partial_xf(t)\|_{L^1}\| \partial_xf(t)\|_{L^1}\| f(t)\|_{L^1}+(\text{symm.~terms})\Big) \les t^{-1-\zeta +6\de}\ve_1^3.\]
Collecting the estimates above, we obtain, by setting $0<\delta \leq \frac{1}{12}$ and $\zeta= 7\de$,
\[ \| \xi R_1(t,\xi )\|_{L^\infty_\xi}\les t^{-1-\de}\ve_1^3. \]

Concerning $R_2$, we consider 
\begin{align*}
\wt{R}_2(t,\xi )&:=\xi \Big( R_2(t,\xi ) + \frac{i\beta}{2t}\xi \mathcal{H}\big[ |\wh{f}(t)|^2\big] (\xi )\wh{f}(t,\xi ) \Big) \\
&\;=\frac{\beta}{2\pi}\iint_{\R^2} e^{2it\eta \sigma}\xi^{2}\sgn{\eta}\wh{f}(t,\xi-\eta)\ol{\wh{f}(\xi-\eta-\sigma)}\wh{f}(\xi-\sigma )\,d\eta d\sigma 
\end{align*}
and decompose $\wt R_2$ into $\wt{R}_{2,1}+\wt{R}_{2,2}+\wt{R}_{2,3}$, similarly to $\wt R_1$ with \eqref{eq:algebraic}. The estimate of $\wt{R}_{2,1}$ is the same as that of $\wt{R}_{1,1}$, since the sign function $\sgn{\eta}$ does not affect the integration by parts in $\sigma$.
Indeed, we obtain, with some $0\leq k_j\leq 1$ satisfying $k_1+k_2+k_3=1$, 
\begin{align}\label{eq:esti-r21}
\begin{aligned}
	\| \wt{R}_{2,1}(t,\xi )\|_{L^\infty_\xi}&\les t^{-1} \left\| \partial_x^{k_1}u(t)\mathcal{H}\left[   {\rm Re}\left( \ol{e^{it\partial_x^2}x\partial_x^{k_2}f(t)}\partial_x^{k_3}u(t)\right) \right] \right\|_{L^1_x} \\
&\les t^{-1}\| u(t)\|_{H^1}\| f(t)\| _{H^{1,1}}\| u(t)\|_{W^{1,\infty}}  \\
&\les t^{-\frac32+5\de}\ve_1^3.
\end{aligned}
\end{align}
For $\wt{R}_{2,2}$, the situation is slightly complicated due to the sign function. By integration by parts in $\eta$, we estimate
\begin{align*}
\wt{R}_{2,2}(t,\xi ) &=\frac{\beta}{2\pi}\int_{\R^2} e^{2it\eta \sigma}\sigma \sgn{\eta}(\xi-\eta)\wh{f}(t,\xi-\eta)\ol{\wh{f}(\xi-\eta-\sigma)}\wh{f}(\xi-\sigma )\,d\eta d\sigma \\
&=-\frac{\beta}{2\pi}\frac{1}{2t}\int_{\R} \left\{ \Big( \int_0^\infty -\int_{-\infty}^0\Big) \partial_\eta (e^{2it\eta \sigma}) \wh{\partial_xf}(t,\xi-\eta)\ol{\wh{f}(\xi-\eta-\sigma)}\wh{f}(\xi-\sigma )\,d\eta \right\} d\sigma \\
&=\frac{\beta}{4\pi t}\int_{\R^2} e^{2it\eta \sigma}\sgn{\eta}\partial_\eta \Big( \wh{\partial_xf}(t,\xi-\eta)\ol{\wh{f}(\xi-\eta-\sigma)}\Big) \wh{f}(\xi-\sigma )\,d\eta d\sigma \\
&\quad +\frac{\beta}{2\pi t}\int_{\R} \wh{\partial_xf}(t,\xi)|\wh{f}(\xi-\sigma )|^2\,d\sigma .
\end{align*}
The first term of the right-hand side above is estimated similarly to $\wt{R}_{1,2}$, by
\[ t^{-1}\Big( \| e^{it\partial_x^2}x\partial_xf(t)\mathcal{H}\big[ |u(t)|^2\big] \|_{L^1}+\| \partial_xu(t)\mathcal{H}\big[ \ol{e^{it\partial_x^2}xf(t)}u(t)\big] \|_{L^1}\Big) \lesssim t^{-\frac32+5\de}\ve_1^3. \]
Hence, we obtain
\[ \wt{R}_{2,2}(t,\xi )= \frac{i\beta}{2\pi t}\xi \wh{f}(t,\xi) \int_{\R} |\wh{f}(t,\sigma )|^2\,d\sigma +O(t^{-1-\de} \ve_1^3 ). \]

To estimate $\wt{R}_{2,3}$, we repeat the approach for $\wt{R}_{1,3}$. Indeed, we have
\begin{align*}
\wt{R}_{2,3}(t,\xi )&= \frac{\beta}{2t}\mathcal{H}\big[ \ol{(-i\wh{\partial_xf})(t)}\wh{f}(t)\big] (\xi )(\wh{\partial_xf})(t,\xi)\\
&\quad +\frac{\beta}{2t}(2\pi)^{-\frac32}\int_{\R^3} \Big( e^{-i\frac{yz}{2t}}-1\Big) \sgn{z} e^{-i\xi (x-y-z)}\partial_xf(x-y)\ol{\partial_xf(x)} f(x-z)\,dx dy dz \\
&=  \frac{i\beta}{2t}\xi \mathcal{H}\big[ \cdot |\wh{f}(t,\cdot)|^2\big] (\xi ) \wh{f}(t,\xi) +O(t^{-1-\zeta+6\de}\ve_1^3) \\
&=   \frac{i\beta}{2t}\xi^{2} \mathcal{H}\big[ |\wh{f}(t,\cdot)|^2\big] (\xi) \wh{f}(t,\xi) - \frac{i\beta}{2\pi t}\xi \wh{f}(t,\xi)\int_{\R} |\wh{f}(t,\sigma )|^2\,d\sigma +O(t^{-1-\de}\ve_1^3),
\end{align*}
where  we used the commutator identity $[x,\mathcal{H}]f=\frac1{\pi}\int_{\R}f\,dx$.
Therefore, we see that
\[ \wt{R}_{2,2}(t,\xi )+\wt{R}_{2,3}(t,\xi )=\wt{R}_2(t,\xi )-\xi R_2(t,\xi )+O(t^{-1-\de} \ve_1^3). \]
Combining \eqref{eq:esti-r21} and estimates above, we finish the proof of \eqref{eq:asymptotic1}.

Let us consider \eqref{eq:linfty-local2}. For $0\leq t\leq 1$, the bound \eqref{eq:linfty-local2} follows from
\begin{align}\label{eq:interpol-weight}
\begin{aligned}
	\| \varrho_N(\xi )\bra{\xi}\wh{f}(t,\xi)\|_{L^\infty_\xi} &\les \| P_Nf(t)\|_{H^{1,\frac23}} \les \| P_Nf(t)\|_{H^1}^{\frac13}\| P_Nf(t)\|_{H^{1,1}}^{\frac23} \\
&\les N^{-\frac13}\| u(t)\|_{H^{2}}^{\frac13}\| f(t)\|_{H^{1,1}}^{\frac23} \les N^{-\frac13} \ve_1.
\end{aligned}
\end{align}
Since $B(t,\xi)$ is a real-valued function, \eqref{eq:time-deri-f} implies that
\[ \partial_t\left( \varrho_N(\xi)\bra{\xi}|\wh{f}(t,\xi )|\right)^2 =2{\rm Re}\left[ \bra{\xi} \left( R_1(t,\xi )+R_2(t,\xi)\right) \varrho_N(\xi)^2\bra{\xi}\ol{\wh{f}(t,\xi)}\right] . \]
For $t\geq 1$, the estimates \eqref{eq:interpol-weight} and \eqref{eq:linfty-apriori} yield that
\[ \| \varrho_N(\xi)\bra{\xi}\wh{f}(t,\xi)\|_{L^\infty_\xi}\lesssim \left( N^{-\frac13}t^{\frac{5\de}{3}}\right) ^\theta \ve_1 \]
for any $\theta \in [0,1]$.
Using this and \eqref{eq:asymptotic0}--\eqref{eq:asymptotic1}, we have
\[ \partial_t\left( \varrho_N(\xi)\bra{\xi}|\wh{f}(t,\xi )|\right)^2 \les t^{-1-\de}\ve_1^3\cdot N^{-\frac{\theta}{3}}t^{\frac{5\delta}{3}\theta}\ve_1.\]
Then this completes the proof by taking $\theta>0$ sufficiently small and integrating it over $[1,t]$.
\end{proof}


\section{Proof of main theorem}\label{sec:main}

In this section, we prove the main theorem of this paper. 
For solutions $u\in C([0,T];H^2\cap H^{1,1})$ to \eqref{kdnls}, we use the $X_T$ norm introduced in \eqref{eq:assumption-apriori}.

We begin with the existence of global solutions $u\in C([0,\infty),H^2\cap H^{1,1})$ to \eqref{kdnls} for small initial data $\phi\in H^2\cap H^{1,1}$ satisfying \eqref{thm:initial}.
By the local well-posedness result in the previous work \cite{KL-LWP}, we have a unique local solution $u\in C([0,T];H^2\cap H^{1,1})$ with $\| u\|_{X_T}\les \ve$ for some $T>0$.
To extend it globally, it suffices to show a global a priori bound on $\| u(t)\|_{H^2\cap H^{1,1}}$.
Since 
\[ \| u(t)\|_{H^2\cap H^{1,1}}\les \| u(t)\|_{H^2}+\| xu(t)\|_{H^1}\les \bra{t}\| u(t)\|_{H^2}+\| J(t)u(t)\|_{H^1}\les \bra{t}^{1+\de}\| u\|_{X_T} \]
for $0\leq t\leq T$, it suffices to give an a priori bound on the $X_T$ norm. Indeed, we shall prove that
\begin{equation}\label{assumption:global}
\| u\|_{X_T}\leq \ve_1:=K\ve
\end{equation}
for any $T>0$ whenever $0 < \ve \ll 1$, where $K\geq 1$ is a large constant independent of $T$ and $\ve$.
The local result shows \eqref{assumption:global} for small $T$.
Suppose \eqref{assumption:global} holds for some $T>0$, then the a priori estimates 
\begin{align*}
	\|u(t)\|_{H^2} &\leq C(\ve + \bra{t}^\de \ve_1^2),\\
	\|xf(t)\|_{H^1} &\leq C(\ve + \bra{t}^{2\de} \ve_1^2)
\end{align*}
established in Sections \ref{sec:energy} and \ref{sec:weighted} imply that
\begin{align*}
	\|u\|_{X_T} \leq 2C(1+K^2\ve)\ve,
\end{align*} 
as long as $\ve_1=K\ve\leq 1$.
Choosing $K\geq 1$ large enough and $\ve_0>0$ small enough so that $2C(1+K^2\ve_0)\leq K/2$ and $K\ve_0\leq 1$, we have
\[ \| u\|_{X_T}\leq \ve_1/2 .\]
Hence, by a bootstrap argument we obtain \eqref{assumption:global}.
%


Next, we focus on proving the asymptotic behavior of the global solution.  
By Lemma~\ref{lem:asymptotic} (with $\ve_1=K\ve$), for $1\leq t_1 \leq t_2<\infty$ we have
\begin{align*}
	\left\|\bra{\xi}\left(e^{-iB(t_2,\xi)}\wh{f}(t_2,\xi) - e^{-iB(t_1,\xi)}\wh{f}(t_1,\xi) \right) \right \|_{L_\xi^\infty}  \les t_1^{-\de}\ve^3,
\end{align*}
where $B(t,\xi)$ is the real valued function given in the lemma.
Hence, 
the limit 
\begin{align*}
 W(\cdot ) = \lim_{t \to \infty} e^{-iB(t,\cdot )} \wh{f}(t,\cdot )
\end{align*}
exists in the sense of $\bra{\xi}^{-1}L^\infty_\xi$, and it holds that
\begin{align*}
	\normo{ \bra{\xi}  \left( \wh u(t,\xi) - e^{iB(t,\xi)}e^{it\xi^2} W(\xi) \right)}_{L_\xi^\infty} \les \bra{t}^{-\de} \ve^3
\end{align*}
for $t>0$. 
From the definition of $B(t,\xi)$ and calculations in Section \ref{sec:asymptotic}, we see that
\begin{align*}
	B(t,\xi) = \left[\frac \al2 \xi|W(\xi)|^2  + \frac \beta 2 \xi \cH \left(|W|^2\right)(\xi) \right] \log t + \Phi(\xi) + \ve^3O( t^{-\de_1}),
\end{align*}
uniformly in $\xi \in \R$, for some real-valued function $\Phi(\xi)$ and some $\de_1 \in (0,\de)$. This yields that
\begin{align*}
	\wh{f}(t,\xi)&= W(\xi)  \exp\left(i\left[\frac \al2 \xi|W(\xi)|^2  + \frac \beta 2 \xi \cH \left(|W|^2\right)(\xi)  \right] \log t + i\Phi(\xi) \right) + \ve^3 O( t^{-\de_1}).
\end{align*}
Then, by Lemma \ref{lem:linear} we obtain
\begin{align*}
	u(t)&= \frac1{\sqrt{2it}} e^{\frac{ix^2}{4t}} W\left(\frac x{2t}\right) \exp\left(i\left[ \frac {\al x}{4t}\left|W\left(\frac x{2t}\right) \right|^2  +  \frac {\beta x}{4t} \cH \left(\left|W\right|^2 \right) \left(\frac x{2t}\right) \right] \log t + i\Phi\left(\frac x{2t}\right)\right) \\
	&\qquad  +\ve^3 O(t^{-\frac12-\de_1})  +\ve \bra{t}^{2\de} O(t^{-\frac12-\gamma}),
\end{align*}
for any $\gamma \in \left(0,\frac14\right)$. This finishes the proof of \eqref{thm:asymptotic}.

Finally, we show the properties of  $\| u(t)\|_{L^2}$ in \eqref{thm:dissipation}. 
By Lemma~\ref{lem:L2energy}, the limit $D_\infty=\lim\limits_{t\to \infty}\| u(t)\|_{L^2}$ exists and $D_\infty = (1+ O(\ve^2)) \| \phi\|_{L^2}$. 
In particular, $D_\infty >0$ in the dissipative case $\beta <0$.
Moreover, we recall the following differential inequalities established in the proof of Lemma~\ref{lem:L2energy}:
\begin{align*}
	0 \le \sgn{\beta} \frac{d}{dt}\|u(t)\|_{L^2} \les \bra{t}^{-2+2\de} \ve^3
\end{align*}
for $t>0$.
Integrating it from $\infty$ to $t$, we obtain
\begin{align*}
	0 \le \sgn{\beta} \left(D_\infty - \|u(t)\|_{L^2}\right) \les \bra{t}^{-1+2\de} \ve^3
	\end{align*}
for $t\geq 0$.
This completes the proof of Theorem~\ref{thm:main}.
\hfill \qed 

\section*{Acknowledgement}
N. Kishimoto was supported in part by KAKENHI Grant-in-Aid for Scientific Research (C) JP20K03678 funded by the Japan Society for the Promotion of Science (JSPS). K. Lee was supported
in part by RS-2025-00514043 and RS-2024-00463260, the National Research Foundation of
Korea(NRF) grant funded by the Korea government (MSIT) and (MOE), respectively.

\bibliographystyle{siam}
\bibliography{ReferencesNobuKiyeon}

\medskip

\end{document}